
\documentclass{article}
\usepackage{amssymb}
\usepackage{amsfonts}
\usepackage{amsmath}
\usepackage{tikz-cd}
\usepackage{faktor}
\usepackage{graphicx,nicefrac}
\usepackage[nottoc]{tocbibind}
\usepackage{microtype}
\usepackage{capt-of}
\usepackage{hyperref}

\setcounter{MaxMatrixCols}{10}

\newtheorem{theorem}{Theorem}[section]

\newtheorem{axiom}[theorem]{Axiom}

\newtheorem{conjecture}[theorem]{Conjecture}
\newtheorem{corollary}[theorem]{Corollary}

\newtheorem{definition}[theorem]{Definition}
\newtheorem{example}[theorem]{Example}
\newtheorem{exercise}[theorem]{Exercise}
\newtheorem{lemma}[theorem]{Lemma}

\newtheorem{proposition}[theorem]{Proposition}
\newtheorem{remark}[theorem]{Remark}

\renewcommand{\theequation}{\thesection.\arabic{equation}}
\newenvironment{proof}[1][Proof]{\noindent\textbf{#1.} }{\ \rule{0.5em}{0.5em}}


\typeout{TCILATEX Macros for Scientific Word 5.0 <13 Feb 2003>.}
\typeout{NOTICE:  This macro file is NOT proprietary and may be 
freely copied and distributed.}
\makeatletter

\ifx\pdfoutput\relax\let\pdfoutput=\undefined\fi
\newcount\msipdfoutput
\ifx\pdfoutput\undefined
\else
 \ifcase\pdfoutput
 \else 
    \msipdfoutput=1
    \ifx\paperwidth\undefined
    \else
      \ifdim\paperheight=0pt\relax
      \else
        \pdfpageheight\paperheight
      \fi
      \ifdim\paperwidth=0pt\relax
      \else
        \pdfpagewidth\paperwidth
      \fi
    \fi
  \fi  
\fi

%

%
\newcount\@hour\newcount\@minute\chardef\@x10\chardef\@xv60
\def\tcitime{
\def\@time{%
  \@minute\time\@hour\@minute\divide\@hour\@xv
  \ifnum\@hour<\@x 0\fi\the\@hour:%
  \multiply\@hour\@xv\advance\@minute-\@hour
  \ifnum\@minute<\@x 0\fi\the\@minute
  }}%


\def\x@hyperref#1#2#3{%
   \catcode`\~ = 12
   \catcode`\$ = 12
   \catcode`\_ = 12
   \catcode`\# = 12
   \catcode`\& = 12
   \y@hyperref{#1}{#2}{#3}%
}

\def\y@hyperref#1#2#3#4{%
   #2\ref{#4}#3
   \catcode`\~ = 13
   \catcode`\$ = 3
   \catcode`\_ = 8
   \catcode`\# = 6
   \catcode`\& = 4
}

\@ifundefined{hyperref}{\let\hyperref\x@hyperref}{}
\@ifundefined{msihyperref}{\let\msihyperref\x@hyperref}{}

\@ifundefined{qExtProgCall}{\def\qExtProgCall#1#2#3#4#5#6{\relax}}{}
%
%
%
%
\def\QCTOpt[#1]#2{%
  \def\QCTOptB{#1}
  \def\QCTOptA{#2}
}
\def\QCTNOpt#1{%
  \def\QCTOptA{#1}
  \let\QCTOptB\empty
}
\def\Qct{%
  \@ifnextchar[{%
    \QCTOpt}{\QCTNOpt}
}
\def\QCBOpt[#1]#2{%
  \def\QCBOptB{#1}%
  \def\QCBOptA{#2}%
}
\def\QCBNOpt#1{%
  \def\QCBOptA{#1}%
  \let\QCBOptB\empty
}
\def\Qcb{%
  \@ifnextchar[{%
    \QCBOpt}{\QCBNOpt}%
}
\def\PrepCapArgs{%
  \ifx\QCBOptA\empty
    \ifx\QCTOptA\empty
      {}%
    \else
      \ifx\QCTOptB\empty
        {\QCTOptA}%
      \else
        [\QCTOptB]{\QCTOptA}%
      \fi
    \fi
  \else
    \ifx\QCBOptA\empty
      {}%
    \else
      \ifx\QCBOptB\empty
        {\QCBOptA}%
      \else
        [\QCBOptB]{\QCBOptA}%
      \fi
    \fi
  \fi
}
\newcount\GRAPHICSTYPE
\GRAPHICSTYPE=\z@
\def\GRAPHICSPS#1{%
 \ifcase\GRAPHICSTYPE
   \special{ps: #1}%
 \or
   \special{language "PS", include "#1"}%
 \fi
}%
%
%
%

\def\graffile#1#2#3#4{%
    \bgroup
	   \@inlabelfalse
       \leavevmode
       \@ifundefined{bbl@deactivate}{\def~{\string~}}{\activesoff}%
        \raise -#4 \BOXTHEFRAME{%
           \hbox to #2{\raise #3\hbox to #2{\null #1\hfil}}}%
    \egroup
}%
%
\def\draftbox#1#2#3#4{%
 \leavevmode\raise -#4 \hbox{%
  \frame{\rlap{\protect\tiny #1}\hbox to #2%
   {\vrule height#3 width\z@ depth\z@\hfil}%
  }%
 }%
}%
\newcount\@msidraft
\@msidraft=\z@
\let\nographics=\@msidraft
\newif\ifwasdraft
\wasdraftfalse

\def\GRAPHIC#1#2#3#4#5{%
   \ifnum\@msidraft=\@ne\draftbox{#2}{#3}{#4}{#5}%
   \else\graffile{#1}{#3}{#4}{#5}%
   \fi
}
\def\addtoLaTeXparams#1{%
    \edef\LaTeXparams{\LaTeXparams #1}}%
%

\newif\ifBoxFrame \BoxFramefalse
\newif\ifOverFrame \OverFramefalse
\newif\ifUnderFrame \UnderFramefalse

\def\BOXTHEFRAME#1{%
   \hbox{%
      \ifBoxFrame
         \frame{#1}%
      \else
         {#1}%
      \fi
   }%
}

\def\doFRAMEparams#1{\BoxFramefalse\OverFramefalse\UnderFramefalse\readFRAMEparams#1\end}%
\def\readFRAMEparams#1{%
 \ifx#1\end%
  \let\next=\relax
  \else
  \ifx#1i\dispkind=\z@\fi
  \ifx#1d\dispkind=\@ne\fi
  \ifx#1f\dispkind=\tw@\fi
  \ifx#1t\addtoLaTeXparams{t}\fi
  \ifx#1b\addtoLaTeXparams{b}\fi
  \ifx#1p\addtoLaTeXparams{p}\fi
  \ifx#1h\addtoLaTeXparams{h}\fi
  \ifx#1X\BoxFrametrue\fi
  \ifx#1O\OverFrametrue\fi
  \ifx#1U\UnderFrametrue\fi
  \ifx#1w
    \ifnum\@msidraft=1\wasdrafttrue\else\wasdraftfalse\fi
    \@msidraft=\@ne
  \fi
  \let\next=\readFRAMEparams
  \fi
 \next
 }%
%

\def\IFRAME#1#2#3#4#5#6{%
      \bgroup
      \let\QCTOptA\empty
      \let\QCTOptB\empty
      \let\QCBOptA\empty
      \let\QCBOptB\empty
      #6%
      \parindent=0pt
      \leftskip=0pt
      \rightskip=0pt
      \setbox0=\hbox{\QCBOptA}%
      \@tempdima=#1\relax
      \ifOverFrame
          \typeout{This is not implemented yet}%
          \show\HELP
      \else
         \ifdim\wd0>\@tempdima
            \advance\@tempdima by \@tempdima
            \ifdim\wd0 >\@tempdima
               \setbox1 =\vbox{%
                  \unskip\hbox to \@tempdima{\hfill\GRAPHIC{#5}{#4}{#1}{#2}{#3}\hfill}%
                  \unskip\hbox to \@tempdima{\parbox[b]{\@tempdima}{\QCBOptA}}%
               }%
               \wd1=\@tempdima
            \else
               \textwidth=\wd0
               \setbox1 =\vbox{%
                 \noindent\hbox to \wd0{\hfill\GRAPHIC{#5}{#4}{#1}{#2}{#3}\hfill}\\%
                 \noindent\hbox{\QCBOptA}%
               }%
               \wd1=\wd0
            \fi
         \else
            \ifdim\wd0>0pt
              \hsize=\@tempdima
              \setbox1=\vbox{%
                \unskip\GRAPHIC{#5}{#4}{#1}{#2}{0pt}%
                \break
                \unskip\hbox to \@tempdima{\hfill \QCBOptA\hfill}%
              }%
              \wd1=\@tempdima
           \else
              \hsize=\@tempdima
              \setbox1=\vbox{%
                \unskip\GRAPHIC{#5}{#4}{#1}{#2}{0pt}%
              }%
              \wd1=\@tempdima
           \fi
         \fi
         \@tempdimb=\ht1
         \advance\@tempdimb by -#2
         \advance\@tempdimb by #3
         \leavevmode
         \raise -\@tempdimb \hbox{\box1}%
      \fi
      \egroup%
}%
%
\def\DFRAME#1#2#3#4#5{%
  \vspace\topsep
  \hfil\break
  \bgroup
     \leftskip\@flushglue
	 \rightskip\@flushglue
	 \parindent\z@
	 \parfillskip\z@skip
     \let\QCTOptA\empty
     \let\QCTOptB\empty
     \let\QCBOptA\empty
     \let\QCBOptB\empty
	 \vbox\bgroup
        \ifOverFrame 
           #5\QCTOptA\par
        \fi
        \GRAPHIC{#4}{#3}{#1}{#2}{\z@}%
        \ifUnderFrame 
           \break#5\QCBOptA
        \fi
	 \egroup
  \egroup
  \vspace\topsep
  \break
}%
%
\def\FFRAME#1#2#3#4#5#6#7{%
  \@ifundefined{floatstyle}
    {
     \begin{figure}[#1]%
    }
    {
	 \ifx#1h
      \begin{figure}[H]%
	 \else
      \begin{figure}[#1]%
	 \fi
	}
  \let\QCTOptA\empty
  \let\QCTOptB\empty
  \let\QCBOptA\empty
  \let\QCBOptB\empty
  \ifOverFrame
    #4
    \ifx\QCTOptA\empty
    \else
      \ifx\QCTOptB\empty
        \caption{\QCTOptA}%
      \else
        \caption[\QCTOptB]{\QCTOptA}%
      \fi
    \fi
    \ifUnderFrame\else
      \label{#5}%
    \fi
  \else
    \UnderFrametrue%
  \fi
  \begin{center}\GRAPHIC{#7}{#6}{#2}{#3}{\z@}\end{center}%
  \ifUnderFrame
    #4
    \ifx\QCBOptA\empty
      \caption{}%
    \else
      \ifx\QCBOptB\empty
        \caption{\QCBOptA}%
      \else
        \caption[\QCBOptB]{\QCBOptA}%
      \fi
    \fi
    \label{#5}%
  \fi
  \end{figure}%
 }%
%
%
%
%
%
\newcount\dispkind%

\def\makeactives{
  \catcode`\"=\active
  \catcode`\;=\active
  \catcode`\:=\active
  \catcode`\'=\active
  \catcode`\~=\active
}
\bgroup
   \makeactives
   \gdef\activesoff{%
      \def"{\string"}%
      \def;{\string;}%
      \def:{\string:}%
      \def'{\string'}%
      \def~{\string~}%
    }
\egroup

\def\FRAME#1#2#3#4#5#6#7#8{%
 \bgroup
 \ifnum\@msidraft=\@ne
   \wasdrafttrue
 \else
   \wasdraftfalse%
 \fi
 \def\LaTeXparams{}%
 \dispkind=\z@
 \def\LaTeXparams{}%
 \doFRAMEparams{#1}%
 \ifnum\dispkind=\z@\IFRAME{#2}{#3}{#4}{#7}{#8}{#5}\else
  \ifnum\dispkind=\@ne\DFRAME{#2}{#3}{#7}{#8}{#5}\else
   \ifnum\dispkind=\tw@
    \edef\@tempa{\noexpand\FFRAME{\LaTeXparams}}%
    \@tempa{#2}{#3}{#5}{#6}{#7}{#8}%
    \fi
   \fi
  \fi
  \ifwasdraft\@msidraft=1\else\@msidraft=0\fi{}%
  \egroup
 }%
%

\def\TEXUX#1{"texux"}

%
%
%
%
%
%
%
%
\def\func#1{\mathop{\rm #1}\nolimits}%
%

%
\long\def\QQQ#1#2{%
     \long\expandafter\def\csname#1\endcsname{#2}}%
\@ifundefined{QTP}{\def\QTP#1{}}{}
\@ifundefined{QEXCLUDE}{\def\QEXCLUDE#1{}}{}
\@ifundefined{Qlb}{}{}
\@ifundefined{Qlt}{}{}
\long\def\QQA#1#2{}%
\def\QTR#1#2{{\csname#1\endcsname {#2}}}%
\def\EXPAND#1[#2]#3{}%
\def\NOEXPAND#1[#2]#3{}%
\def\LaTeXparent#1{}%
\def\ChildStyles#1{}%
\def\ChildDefaults#1{}%
\def\QTagDef#1#2#3{}%

\@ifundefined{correctchoice}{}{}
\@ifundefined{HTML}{\def\HTML#1{\relax}}{}
\@ifundefined{TCIIcon}{\def\TCIIcon#1#2#3#4{\relax}}{}
\if@compatibility
  \typeout{Not defining UNICODE  U or CustomNote commands for LaTeX 2.09.}
\else
  \providecommand{\UNICODE}[2][]{\protect\rule{.1in}{.1in}}
  \providecommand{\U}[1]{\protect\rule{.1in}{.1in}}
  
\fi

\@ifundefined{lambdabar}{
      
   }{}

%
\@ifundefined{StyleEditBeginDoc}{}{}
%
\def\QQfnmark#1{\footnotemark}

%
%
\@ifundefined{TCIMAKEINDEX}{}{\makeindex}%
%
\@ifundefined{abstract}{%
 \def\abstract{%
  \if@twocolumn
   \section*{Abstract (Not appropriate in this style!)}%
   \else \small 
   \begin{center}{\bf Abstract\vspace{-.5em}\vspace{\z@}}\end{center}%
   \quotation 
   \fi
  }%
 }{%
 }%
\@ifundefined{endabstract}{\def\endabstract
  {\if@twocolumn\else\endquotation\fi}}{}%
\@ifundefined{maketitle}{\def\maketitle#1{}}{}%
\@ifundefined{affiliation}{\def\affiliation#1{}}{}%
\@ifundefined{proof}{}{}%
\@ifundefined{endproof}{}{}%
\@ifundefined{newfield}{\def\newfield#1#2{}}{}%
\@ifundefined{chapter}{\def\chapter#1{\par(Chapter head:)#1\par }%
 \newcount\c@chapter}{}%
\@ifundefined{part}{\def\part#1{\par(Part head:)#1\par }}{}%
\@ifundefined{section}{\def\section#1{\par(Section head:)#1\par }}{}%
\@ifundefined{subsection}{\def\subsection#1%
 {\par(Subsection head:)#1\par }}{}%
\@ifundefined{subsubsection}{\def\subsubsection#1%
 {\par(Subsubsection head:)#1\par }}{}%
\@ifundefined{paragraph}{\def\paragraph#1%
 {\par(Subsubsubsection head:)#1\par }}{}%
\@ifundefined{subparagraph}{\def\subparagraph#1%
 {\par(Subsubsubsubsection head:)#1\par }}{}%
\@ifundefined{therefore}{}{}%
\@ifundefined{backepsilon}{}{}%
\@ifundefined{yen}{}{}%
\@ifundefined{registered}{%
   \def\registered{\relax\ifmmode{}\r@gistered
                    \else$\m@th\r@gistered$\fi}%
 \def\r@gistered{^{\ooalign
  {\hfil\raise.07ex\hbox{$\scriptstyle\rm\text{R}$}\hfil\crcr
  \mathhexbox20D}}}}{}%
\@ifundefined{Eth}{}{}%
\@ifundefined{eth}{}{}%
\@ifundefined{Thorn}{}{}%
\@ifundefined{thorn}{}{}%
%
\@ifundefined{degree}{}{}%
%
\newdimen\theight
\@ifundefined{Column}{\def\Column{%
 \vadjust{\setbox\z@=\hbox{\scriptsize\quad\quad tcol}%
  \theight=\ht\z@\advance\theight by \dp\z@\advance\theight by \lineskip
  \kern -\theight \vbox to \theight{%
   \rightline{\rlap{\box\z@}}%
   \vss
   }%
  }%
 }}{}%
\@ifundefined{qed}{\def\qed{%
 \ifhmode\unskip\nobreak\fi\ifmmode\ifinner\else\hskip5\p@\fi\fi
 \hbox{\hskip5\p@\vrule width4\p@ height6\p@ depth1.5\p@\hskip\p@}%
 }}{}%
\@ifundefined{cents}{}{}%
\@ifundefined{tciLaplace}{}{}%
\@ifundefined{tciFourier}{}{}%
\@ifundefined{textcurrency}{}{}%
\@ifundefined{texteuro}{}{}%
\@ifundefined{euro}{}{}%
\@ifundefined{textfranc}{}{}%
\@ifundefined{textlira}{}{}%
\@ifundefined{textpeseta}{}{}%
\@ifundefined{miss}{\def\miss{\hbox{\vrule height2\p@ width 2\p@ depth\z@}}}{}%
\@ifundefined{vvert}{}{}
\@ifundefined{tcol}{\def\tcol#1{{\baselineskip=6\p@ \vcenter{#1}} \Column}}{}%
\@ifundefined{dB}{}{}
\@ifundefined{mB}{}{}
\@ifundefined{nB}{}{}
\@ifundefined{note}{}{}%
\def\newfmtname{LaTeX2e}
%
\ifx\fmtname\newfmtname
  \DeclareOldFontCommand{\rm}{\normalfont\rmfamily}{\mathrm}
  \DeclareOldFontCommand{\sf}{\normalfont\sffamily}{\mathsf}
  \DeclareOldFontCommand{\tt}{\normalfont\ttfamily}{\mathtt}
  \DeclareOldFontCommand{\bf}{\normalfont\bfseries}{\mathbf}
  \DeclareOldFontCommand{\it}{\normalfont\itshape}{\mathit}
  \DeclareOldFontCommand{\sl}{\normalfont\slshape}{\@nomath\sl}
  \DeclareOldFontCommand{\sc}{\normalfont\scshape}{\@nomath\sc}
\fi

%

\def\alpha{{\Greekmath 010B}}%
\def\beta{{\Greekmath 010C}}%
\def\gamma{{\Greekmath 010D}}%
\def\delta{{\Greekmath 010E}}%
\def\epsilon{{\Greekmath 010F}}%
\def\zeta{{\Greekmath 0110}}%
\def\eta{{\Greekmath 0111}}%
\def\theta{{\Greekmath 0112}}%
\def\iota{{\Greekmath 0113}}%
\def\kappa{{\Greekmath 0114}}%
\def\lambda{{\Greekmath 0115}}%
\def\mu{{\Greekmath 0116}}%
\def\nu{{\Greekmath 0117}}%
\def\xi{{\Greekmath 0118}}%
\def\pi{{\Greekmath 0119}}%
\def\rho{{\Greekmath 011A}}%
\def\sigma{{\Greekmath 011B}}%
\def\tau{{\Greekmath 011C}}%
\def\upsilon{{\Greekmath 011D}}%
\def\phi{{\Greekmath 011E}}%
\def\chi{{\Greekmath 011F}}%
\def\psi{{\Greekmath 0120}}%
\def\omega{{\Greekmath 0121}}%
\def\varepsilon{{\Greekmath 0122}}%
\def\vartheta{{\Greekmath 0123}}%
\def\varpi{{\Greekmath 0124}}%
\def\varrho{{\Greekmath 0125}}%
\def\varsigma{{\Greekmath 0126}}%
\def\varphi{{\Greekmath 0127}}%

\def\nabla{{\Greekmath 0272}}
\def\FindBoldGroup{%
   {\setbox0=\hbox{$\mathbf{x\global\edef\theboldgroup{\the\mathgroup}}$}}%
}

\def\Greekmath#1#2#3#4{%
    \if@compatibility
        \ifnum\mathgroup=\symbold
           \mathchoice{\mbox{\boldmath$\displaystyle\mathchar"#1#2#3#4$}}%
                      {\mbox{\boldmath$\textstyle\mathchar"#1#2#3#4$}}%
                      {\mbox{\boldmath$\scriptstyle\mathchar"#1#2#3#4$}}%
                      {\mbox{\boldmath$\scriptscriptstyle\mathchar"#1#2#3#4$}}%
        \else
           \mathchar"#1#2#3#4%
        \fi 
    \else 
        \FindBoldGroup
        \ifnum\mathgroup=\theboldgroup 
           \mathchoice{\mbox{\boldmath$\displaystyle\mathchar"#1#2#3#4$}}%
                      {\mbox{\boldmath$\textstyle\mathchar"#1#2#3#4$}}%
                      {\mbox{\boldmath$\scriptstyle\mathchar"#1#2#3#4$}}%
                      {\mbox{\boldmath$\scriptscriptstyle\mathchar"#1#2#3#4$}}%
        \else
           \mathchar"#1#2#3#4%
        \fi     	    
	  \fi}

\newif\ifGreekBold  \GreekBoldfalse
\let\SAVEPBF=\pbf
\def\pbf{\GreekBoldtrue\SAVEPBF}%

\@ifundefined{theorem}{\newtheorem{theorem}{Theorem}}{}
\@ifundefined{lemma}{\newtheorem{lemma}[theorem]{Lemma}}{}
\@ifundefined{corollary}{\newtheorem{corollary}[theorem]{Corollary}}{}
\@ifundefined{conjecture}{}{}
\@ifundefined{proposition}{}{}
\@ifundefined{axiom}{}{}
\@ifundefined{remark}{\newtheorem{remark}{Remark}}{}
\@ifundefined{example}{\newtheorem{example}{Example}}{}
\@ifundefined{exercise}{}{}
\@ifundefined{definition}{\newtheorem{definition}{Definition}}{}

\@ifundefined{mathletters}{%
  \newcounter{equationnumber}  
  \def\mathletters{%
     \addtocounter{equation}{1}
     \edef\@currentlabel{\theequation}%
     \setcounter{equationnumber}{\c@equation}
     \setcounter{equation}{0}%
     \edef\theequation{\@currentlabel\noexpand\alph{equation}}%
  }
  
}{}

\@ifundefined{BibTeX}{%
    \def\BibTeX{{\rm B\kern-.05em{\sc i\kern-.025em b}\kern-.08em
                 T\kern-.1667em\lower.7ex\hbox{E}\kern-.125emX}}}{}%
\@ifundefined{AmS}%
    {\def\AmS{{\protect\usefont{OMS}{cmsy}{m}{n}%
                A\kern-.1667em\lower.5ex\hbox{M}\kern-.125emS}}}{}%
\@ifundefined{AmSTeX}{}{}%
%

\def\@@eqncr{\let\@tempa\relax
    \ifcase\@eqcnt \def\@tempa{& & &}\or \def\@tempa{& &}%
      \else \def\@tempa{&}\fi
     \@tempa
     \if@eqnsw
        \iftag@
           \@taggnum
        \else
           \@eqnnum\stepcounter{equation}%
        \fi
     \fi
     \global\tag@false
     \global\@eqnswtrue
     \global\@eqcnt\z@\cr}

\def\TCItag{\@ifnextchar*{\@TCItagstar}{\@TCItag}}
\def\@TCItag#1{%
    \global\tag@true
    \global\def\@taggnum{(#1)}%
    \global\def\@currentlabel{#1}}
\def\@TCItagstar*#1{%
    \global\tag@true
    \global\def\@taggnum{#1}%
    \global\def\@currentlabel{#1}}
%
%
%
%
%
%
%
%
%
%
%
%
%
%
%
%
%
%
%
%
%
%
%
%
%
%
%
%
%
%
%
%
%
%
%
%
%
%
%
%
%
%
%
%
%
%
%
%
%
%
%
%
%
%
%
%
%
%

\if@compatibility\else
  \RequirePackage{amsmath}
\fi

\def\ExitTCILatex{\makeatother }

\bgroup
\ifx\ds@amstex\relax
   \message{amstex already loaded}\aftergroup\ExitTCILatex
\else
   \@ifpackageloaded{amsmath}%
      {\if@compatibility\message{amsmath already loaded}\fi\aftergroup\ExitTCILatex}
      {}
   \@ifpackageloaded{amstex}%
      {\if@compatibility\message{amstex already loaded}\fi\aftergroup\ExitTCILatex}
      {}
   \@ifpackageloaded{amsgen}%
      {\if@compatibility\message{amsgen already loaded}\fi\aftergroup\ExitTCILatex}
      {}
\fi
\egroup


\typeout{TCILATEX defining AMS-like constructs in LaTeX 2.09 COMPATIBILITY MODE}
%
%
\let\DOTSI\relax
\def\RIfM@{\relax\ifmmode}%
\def\FN@{\futurelet\next}%
\newcount\intno@
\def\iint{\DOTSI\intno@\tw@\FN@\ints@}%
\def\iiint{\DOTSI\intno@\thr@@\FN@\ints@}%
\def\iiiint{\DOTSI\intno@4 \FN@\ints@}%
\def\idotsint{\DOTSI\intno@\z@\FN@\ints@}%
\def\ints@{\findlimits@\ints@@}%
\newif\iflimtoken@
\newif\iflimits@
\def\findlimits@{\limtoken@true\ifx\next\limits\limits@true
 \else\ifx\next\nolimits\limits@false\else
 \limtoken@false\ifx\ilimits@\nolimits\limits@false\else
 \ifinner\limits@false\else\limits@true\fi\fi\fi\fi}%
\def\multint@{\int\ifnum\intno@=\z@\intdots@                          
 \else\intkern@\fi                                                    
 \ifnum\intno@>\tw@\int\intkern@\fi                                   
 \ifnum\intno@>\thr@@\int\intkern@\fi                                 
 \int}
\def\multintlimits@{\intop\ifnum\intno@=\z@\intdots@\else\intkern@\fi
 \ifnum\intno@>\tw@\intop\intkern@\fi
 \ifnum\intno@>\thr@@\intop\intkern@\fi\intop}%
\def\intic@{%
    \mathchoice{\hskip.5em}{\hskip.4em}{\hskip.4em}{\hskip.4em}}%
\def\negintic@{\mathchoice
 {\hskip-.5em}{\hskip-.4em}{\hskip-.4em}{\hskip-.4em}}%
\def\ints@@{\iflimtoken@                                              
 \def\ints@@@{\iflimits@\negintic@
   \mathop{\intic@\multintlimits@}\limits                             
  \else\multint@\nolimits\fi                                          
  \eat@}
 \else                                                                
 \def\ints@@@{\iflimits@\negintic@
  \mathop{\intic@\multintlimits@}\limits\else
  \multint@\nolimits\fi}\fi\ints@@@}%
\def\intkern@{\mathchoice{\!\!\!}{\!\!}{\!\!}{\!\!}}%
\def\plaincdots@{\mathinner{\cdotp\cdotp\cdotp}}%
\def\intdots@{\mathchoice{\plaincdots@}%
 {{\cdotp}\mkern1.5mu{\cdotp}\mkern1.5mu{\cdotp}}%
 {{\cdotp}\mkern1mu{\cdotp}\mkern1mu{\cdotp}}%
 {{\cdotp}\mkern1mu{\cdotp}\mkern1mu{\cdotp}}}%
%
%
%
\def\RIfM@{\relax\protect\ifmmode}
\def\text{\RIfM@\expandafter\text@\else\expandafter\mbox\fi}
\let\nfss@text\text
\def\text@#1{\mathchoice
   {\textdef@\displaystyle\f@size{#1}}%
   {\textdef@\textstyle\tf@size{\firstchoice@false #1}}%
   {\textdef@\textstyle\sf@size{\firstchoice@false #1}}%
   {\textdef@\textstyle \ssf@size{\firstchoice@false #1}}%
   \glb@settings}

\def\textdef@#1#2#3{\hbox{{%
                    \everymath{#1}%
                    \let\f@size#2\selectfont
                    #3}}}
\newif\iffirstchoice@
\firstchoice@true
%
%
\def\Let@{\relax\iffalse{\fi\let\\=\cr\iffalse}\fi}%
\def\vspace@{\def\vspace##1{\crcr\noalign{\vskip##1\relax}}}%
\def\multilimits@{\bgroup\vspace@\Let@
 \baselineskip\fontdimen10 \scriptfont\tw@
 \advance\baselineskip\fontdimen12 \scriptfont\tw@
 \lineskip\thr@@\fontdimen8 \scriptfont\thr@@
 \lineskiplimit\lineskip
 \vbox\bgroup\ialign\bgroup\hfil$\m@th\scriptstyle{##}$\hfil\crcr}%
\def\Sb{_\multilimits@}%
\def\endSb{\crcr\egroup\egroup\egroup}%
\def\Sp{^\multilimits@}%

%
%
%
\newdimen\ex@
\ex@.2326ex
\def\rightarrowfill@#1{$#1\m@th\mathord-\mkern-6mu\cleaders
 \hbox{$#1\mkern-2mu\mathord-\mkern-2mu$}\hfill
 \mkern-6mu\mathord\rightarrow$}%
\def\leftarrowfill@#1{$#1\m@th\mathord\leftarrow\mkern-6mu\cleaders
 \hbox{$#1\mkern-2mu\mathord-\mkern-2mu$}\hfill\mkern-6mu\mathord-$}%
\def\leftrightarrowfill@#1{$#1\m@th\mathord\leftarrow
\mkern-6mu\cleaders
 \hbox{$#1\mkern-2mu\mathord-\mkern-2mu$}\hfill
 \mkern-6mu\mathord\rightarrow$}%
\def\overrightarrow{\mathpalette\overrightarrow@}%
\def\overrightarrow@#1#2{\vbox{\ialign{##\crcr\rightarrowfill@#1\crcr
 \noalign{\kern-\ex@\nointerlineskip}$\m@th\hfil#1#2\hfil$\crcr}}}%

\def\overleftarrow{\mathpalette\overleftarrow@}%
\def\overleftarrow@#1#2{\vbox{\ialign{##\crcr\leftarrowfill@#1\crcr
 \noalign{\kern-\ex@\nointerlineskip}$\m@th\hfil#1#2\hfil$\crcr}}}%
\def\overleftrightarrow{\mathpalette\overleftrightarrow@}%
\def\overleftrightarrow@#1#2{\vbox{\ialign{##\crcr
   \leftrightarrowfill@#1\crcr
 \noalign{\kern-\ex@\nointerlineskip}$\m@th\hfil#1#2\hfil$\crcr}}}%
\def\underrightarrow{\mathpalette\underrightarrow@}%
\def\underrightarrow@#1#2{\vtop{\ialign{##\crcr$\m@th\hfil#1#2\hfil
  $\crcr\noalign{\nointerlineskip}\rightarrowfill@#1\crcr}}}%

\def\underleftarrow{\mathpalette\underleftarrow@}%
\def\underleftarrow@#1#2{\vtop{\ialign{##\crcr$\m@th\hfil#1#2\hfil
  $\crcr\noalign{\nointerlineskip}\leftarrowfill@#1\crcr}}}%
\def\underleftrightarrow{\mathpalette\underleftrightarrow@}%
\def\underleftrightarrow@#1#2{\vtop{\ialign{##\crcr$\m@th
  \hfil#1#2\hfil$\crcr
 \noalign{\nointerlineskip}\leftrightarrowfill@#1\crcr}}}%

\def\qopnamewl@#1{\mathop{\operator@font#1}\nlimits@}
\let\nlimits@\displaylimits
\def\setboxz@h{\setbox\z@\hbox}

\def\varlim@#1#2{\mathop{\vtop{\ialign{##\crcr
 \hfil$#1\m@th\operator@font lim$\hfil\crcr
 \noalign{\nointerlineskip}#2#1\crcr
 \noalign{\nointerlineskip\kern-\ex@}\crcr}}}}

 \def\rightarrowfill@#1{\m@th\setboxz@h{$#1-$}\ht\z@\z@
  $#1\copy\z@\mkern-6mu\cleaders
  \hbox{$#1\mkern-2mu\box\z@\mkern-2mu$}\hfill
  \mkern-6mu\mathord\rightarrow$}
\def\leftarrowfill@#1{\m@th\setboxz@h{$#1-$}\ht\z@\z@
  $#1\mathord\leftarrow\mkern-6mu\cleaders
  \hbox{$#1\mkern-2mu\copy\z@\mkern-2mu$}\hfill
  \mkern-6mu\box\z@$}

\def\projlim{\qopnamewl@{proj\,lim}}
\def\injlim{\qopnamewl@{inj\,lim}}
\def\varinjlim{\mathpalette\varlim@\rightarrowfill@}
\def\varprojlim{\mathpalette\varlim@\leftarrowfill@}
\def\varliminf{\mathpalette\varliminf@{}}
\def\varliminf@#1{\mathop{\underline{\vrule\@depth.2\ex@\@width\z@
   \hbox{$#1\m@th\operator@font lim$}}}}
\def\varlimsup{\mathpalette\varlimsup@{}}
\def\varlimsup@#1{\mathop{\overline
  {\hbox{$#1\m@th\operator@font lim$}}}}

%
%
%
%
%
%
\begingroup \catcode `|=0 \catcode `[= 1
\catcode`]=2 \catcode `\{=12 \catcode `\}=12
\catcode`\\=12 
|gdef|@alignverbatim#1\end{align}[#1|end[align]]
|gdef|@salignverbatim#1\end{align*}[#1|end[align*]]

|gdef|@alignatverbatim#1\end{alignat}[#1|end[alignat]]
|gdef|@salignatverbatim#1\end{alignat*}[#1|end[alignat*]]

|gdef|@xalignatverbatim#1\end{xalignat}[#1|end[xalignat]]
|gdef|@sxalignatverbatim#1\end{xalignat*}[#1|end[xalignat*]]

|gdef|@gatherverbatim#1\end{gather}[#1|end[gather]]
|gdef|@sgatherverbatim#1\end{gather*}[#1|end[gather*]]

|gdef|@gatherverbatim#1\end{gather}[#1|end[gather]]
|gdef|@sgatherverbatim#1\end{gather*}[#1|end[gather*]]

|gdef|@multilineverbatim#1\end{multiline}[#1|end[multiline]]
|gdef|@smultilineverbatim#1\end{multiline*}[#1|end[multiline*]]

|gdef|@arraxverbatim#1\end{arrax}[#1|end[arrax]]
|gdef|@sarraxverbatim#1\end{arrax*}[#1|end[arrax*]]

|gdef|@tabulaxverbatim#1\end{tabulax}[#1|end[tabulax]]
|gdef|@stabulaxverbatim#1\end{tabulax*}[#1|end[tabulax*]]

|endgroup

\def\align{\@verbatim \frenchspacing\@vobeyspaces \@alignverbatim
You are using the "align" environment in a style in which it is not defined.}

\@namedef{align*}{\@verbatim\@salignverbatim
You are using the "align*" environment in a style in which it is not defined.}
\expandafter\let\csname endalign*\endcsname =\endtrivlist

\def\alignat{\@verbatim \frenchspacing\@vobeyspaces \@alignatverbatim
You are using the "alignat" environment in a style in which it is not defined.}

\@namedef{alignat*}{\@verbatim\@salignatverbatim
You are using the "alignat*" environment in a style in which it is not defined.}
\expandafter\let\csname endalignat*\endcsname =\endtrivlist

\def\xalignat{\@verbatim \frenchspacing\@vobeyspaces \@xalignatverbatim
You are using the "xalignat" environment in a style in which it is not defined.}

\@namedef{xalignat*}{\@verbatim\@sxalignatverbatim
You are using the "xalignat*" environment in a style in which it is not defined.}
\expandafter\let\csname endxalignat*\endcsname =\endtrivlist

\def\gather{\@verbatim \frenchspacing\@vobeyspaces \@gatherverbatim
You are using the "gather" environment in a style in which it is not defined.}

\@namedef{gather*}{\@verbatim\@sgatherverbatim
You are using the "gather*" environment in a style in which it is not defined.}
\expandafter\let\csname endgather*\endcsname =\endtrivlist

\def\multiline{\@verbatim \frenchspacing\@vobeyspaces \@multilineverbatim
You are using the "multiline" environment in a style in which it is not defined.}

\@namedef{multiline*}{\@verbatim\@smultilineverbatim
You are using the "multiline*" environment in a style in which it is not defined.}
\expandafter\let\csname endmultiline*\endcsname =\endtrivlist

\def\arrax{\@verbatim \frenchspacing\@vobeyspaces \@arraxverbatim
You are using a type of "array" construct that is only allowed in AmS-LaTeX.}

\def\tabulax{\@verbatim \frenchspacing\@vobeyspaces \@tabulaxverbatim
You are using a type of "tabular" construct that is only allowed in AmS-LaTeX.}

\@namedef{arrax*}{\@verbatim\@sarraxverbatim
You are using a type of "array*" construct that is only allowed in AmS-LaTeX.}
\expandafter\let\csname endarrax*\endcsname =\endtrivlist

\@namedef{tabulax*}{\@verbatim\@stabulaxverbatim
You are using a type of "tabular*" construct that is only allowed in AmS-LaTeX.}
\expandafter\let\csname endtabulax*\endcsname =\endtrivlist


 \def\endequation{%
     \ifmmode\ifinner 
      \iftag@
        \addtocounter{equation}{-1} 
        $\hfil
           \displaywidth\linewidth\@taggnum\egroup \endtrivlist
        \global\tag@false
        \global\@ignoretrue   
      \else
        $\hfil
           \displaywidth\linewidth\@eqnnum\egroup \endtrivlist
        \global\tag@false
        \global\@ignoretrue 
      \fi
     \else   
      \iftag@
        \addtocounter{equation}{-1} 
        \eqno \hbox{\@taggnum}
        \global\tag@false%
        $$\global\@ignoretrue
      \else
        \eqno \hbox{\@eqnnum}
        $$\global\@ignoretrue
      \fi
     \fi\fi
 } 

 \newif\iftag@ \tag@false
 
 \def\TCItag{\@ifnextchar*{\@TCItagstar}{\@TCItag}}
 \def\@TCItag#1{%
     \global\tag@true
     \global\def\@taggnum{(#1)}%
     \global\def\@currentlabel{#1}}
 \def\@TCItagstar*#1{%
     \global\tag@true
     \global\def\@taggnum{#1}%
     \global\def\@currentlabel{#1}}

  \@ifundefined{tag}{
     \def\tag{\@ifnextchar*{\@tagstar}{\@tag}}
     \def\@tag#1{%
         \global\tag@true
         \global\def\@taggnum{(#1)}}
     \def\@tagstar*#1{%
         \global\tag@true
         \global\def\@taggnum{#1}}
  }{}

%
%
%
%
%

\makeatother

\begin{document}

\title{Algebraic structures on parallelizable manifolds}
\author{Sergey Grigorian \\
School of Mathematical \& Statistical Sciences\\
University of Texas Rio Grande Valley\\
Edinburg, TX 78539\\
USA}
\maketitle

\begin{abstract}
In this paper we explore algebraic and geometric structures that arise on
parallelizable manifolds. Given a parallelizable manifold $\mathbb{L}$,
there exists a global trivialization of the tangent bundle, which defines a
map $\rho_p:\mathfrak{l} \longrightarrow T_p \mathbb{L}$ for each point $p
\in \mathbb{L}$, where $\mathfrak{l}$ is some vector space. This allows us
to define a particular class of vector fields, known as fundamental vector
fields, that correspond to each element of $\mathfrak{l}$. Furthermore,
flows of these vector fields give rise to a product between elements of $%
\mathfrak{l}$ and $\mathbb{L}$, which in turn induces a local loop structure
(i.e. a non-associative analog of a group). Furthermore, we also define a
generalization of a Lie algebra structure on $\mathfrak{l}$. We will
describe the properties and examples of these constructions.
\end{abstract}

\tableofcontents

\section{Introduction}

\setcounter{equation}{0} The tangent bundle is a fundamental aspect of
smooth manifolds, and its global trivialization, or absolute parallelism,
has been a key area of study in differential geometry since the field's
emergence in the early 20th century \cite%
{cartan1926riem,EisenhartParallelism}. The study of parallelizable
manifolds, as smooth manifolds with a trivial tangent bundle are now
generally referred to, has been continuing since then in several directions.
One approach, which is more topological in nature, reframed the question in
terms of \emph{stably }parallelizable manifolds, i.e. manifolds for which
the \emph{stable }tangent bundle is trivial. Given a real smooth manifold $M$
with tangent bundle $TM$, and a trivial rank $1$ bundle $\varepsilon $ over $%
M$ (i.e. $M\times \mathbb{R}$), the stable tangent bundle is $TM\oplus
\varepsilon $. The conditions for a manifold to be stably parallelizable
(also known as a $\pi $-manifold) in low dimensions can be expressed in
terms of vanishing of certain characteristic classes \cite{HatcherVB}. It
also follows from \cite{BredonPimanifolds} that in dimensions $1,3,$ and $7,$
manifolds are parallelizable if and only if they are stably parallelizable.
Another approach, which stems from the origins of Riemannian geometry,
involves the study of flat metric connections on Riemannian manifolds.
Indeed, the existence of a flat metric connection allows to parallel
transport a frame from a single point to the entire manifold, and thus
obtain a trivialization of the tangent bundle. The converse is also
trivially true. Generally such connections will admit torsion and then
questions about classification of parallelizable manifolds become related to
the properties of torsion of flat metric connections. In \cite%
{cartan1926riem}, Cartan and Schouten have shown initiated this approach by
showing that a Riemannian manifold with a flat metric connection and totally
skew-symmetric torsion is either a compact simple Lie group or the $7$%
-sphere $S^{7},$ depending on whether the torsion is parallel or not. A
similar classification in the pseudo-Riemannian case has been obtained by
Wolf \cite{WolfParallelism1,WolfParallelism2}, again under the assumption of
totally skew-symmetric torsion. A more modern treatment of these approaches
has been given in \cite{AgricolaFriedrichFlat}. However, surprisingly,
despite significant efforts from different points of view, there is still no
full classification of parallelizable manifolds and shows that there is
still a need to explore properties of parallelizable manifolds.

This paper builds upon these foundational concepts to explore new algebraic
structures on parallelizable manifold. These structures extend the
traditional notions of Lie algebras and Lie groups, providing a broader
perspective on the geometric and algebraic properties of these manifolds.
Indeed, Lie groups form one of the most well-known classes of parallelizable
manifolds and are of course characterized by a globally defined smooth
associative product and a Lie algebra structure on the tangent space at
identity. The associativity property may be relaxed to consider \emph{smooth
loops }\cite{GrigorianLoops}. The $7$-sphere from \cite{cartan1926riem} is
an example of a smooth loop, when regarded as the set of unit octonions, and
is of course parallelizable. The smooth product allows to identify any
tangent space with the tangent space at identity, providing a global
trivialization of the tangent bundle. This also allows to define an bracket
algebra on the tangent space at identity, however, this is in general no
longer a Lie algebra. As shown in \cite{GrigorianLoops}, on a smooth loop we
may define a family of brackets, defined for each point of the loop, leading
to a \emph{bracket function} defined on the loop. The non-constant nature of
this function then leads to a non-trivial right hand side in the Jacobi
identity. On Lie group, the bracket of left- or right-invariant vector
fields is itself left- or right-invariant, respectively, and hence only a
unique bracket is defined.

In this paper, we expand some of the findings from \cite{GrigorianLoops},
but for arbitrary parallelizable manifolds. Indeed, several properties of
Lie groups and smooth loops only depend on the trivialization of the tangent
bundle, which is a weaker property than the existence of a global structure.

In Section \ref{sectPar}, our exploration begins with a detailed examination
of the global trivialization of tangent bundles, a defining characteristic
of parallelizable manifolds. This global trivialization allows us to define
fundamental vector fields, which are parallel vector fields with respect to
the trivial connection. Integral curves of these vector lead a local loop
structure, a non-associative analog of a group structure. This structure
enriches the manifold's algebraic framework and opens new avenues for
studying its geometric properties. Furthermore, we introduce a
generalization of the Lie algebra structure on parallelizable manifolds.
This generalization provides a new perspective on the algebraic
underpinnings of these manifolds and offers new insights into their
properties. The interplay between the geometric and algebraic aspects of
parallelizable manifolds, enriched by these Lie-like structures, is a key
focus of this paper. It should be noted that in the 1960's, Kikkawa \cite%
{KikkawaLocal1,KikkawaLocal2} has similarly defined local loop structures
based on geodesics of arbitrary affine connections. However, unlike in the
parallelizable case, such an approach does not allow for a richer algebra
structure.

The structure in Section \ref{sectPar} is defined by a parallelizable
manifold together with a fixed trivialization of the tangent bundle. We
denote this by the \emph{parallelized manifold} triple $(\mathbb{L},%
\mathfrak{l,\rho )}$, where $\mathbb{L}$ is a smooth manifold, $\mathfrak{l}$
a vector space which is the model fiber for the tangent bundle, and $\rho $
is a smooth family of linear isomorphisms from $\mathfrak{l}\ $to each
tangent space $T_{p}\mathbb{L}$. We use the trivialization $\rho ~$to define 
\emph{fundamental vector fields} on $\mathbb{L}$, for each $s\in \mathbb{L}$
given by $\rho _{s}\left( \xi \right) \ $for some $\xi \in \mathfrak{l}.$
These are precisely the parallel vector fields for the flat connection
defined by the trivialization. We also use $\rho $ family of local loop
structures $\circ _{s}$ on $\mathfrak{l}$ and corresponding bracket algebra
structures $b^{\left( s\right) }$, together with a family of trilinear forms 
$a^{\left( s\right) }$ that are defined from differentials of $b^{\left(
s\right) }$ Theorem \ref{thmStructeq} gives an analogue of the Maurer-Cartan
structure equation, while Theorem \ref{thmBrackAssoc} relates the brackets
as infinitesimal commutators of the products $\circ _{s}$. As discussed
above for the case of smooth loops, the bracket function is in general
non-constant on $\mathbb{L}$, and its differential is related to a
skew-symmetrization of infinitesimal associators of the local product on $%
\mathfrak{l}.$

In Section \ref{sectAuto} we then define morphisms of parallelized manifolds
and study how the automorphisms of parallelized manifolds interact with the
algebraic operations defined in Section \ref{sectPar}. In Theorem \ref%
{thmAutomorph} we prove that the automorphism group of $(\mathbb{L},%
\mathfrak{l,\rho )}$ is a finite-dimensional Lie subgroup of the
diffeomorphism group of $\mathbb{L}.$

As a particular example, in Section \ref{sectProdSphere} we consider
explicit trivializations of products of spheres, as given in \cite%
{BruniSpheres,Parton1,Parton2}. As shown by Kervaire in \cite%
{KervaireSpheres}, any product of spheres that contains an odd-dimensional
sphere, is parallelizable. However, we consider a simpler examples of $%
S^{m}\times S^{1}$ and $S^{m}\times N$, where $m$ is arbitrary and $N$ is
any parallelizable manifold. The parallelizability of these spaces follows
immediately from the fact that any sphere is stably parallelizable. We find
that in the case of $\mathbb{L=}S^{m}\times S^{1}$, the standard
trivialization yields a family of bracket algebras $b^{\left( x\right) }$,
for $x\in S^{m}$, each of which is isomorphic to the semidirect sum Lie
algebra $\mathbb{R}^{n}\oplus _{S}\mathbb{R}$. Moreover, considering the
triple product $a^{\left( x\right) }$, the algebraic structure $\left( 
\mathfrak{l},a^{\left( x\right) }\right) $ is that of a Lie triple system 
\cite{JacobsonTriple}. We also explicitly compute the local product $\circ
_{s}$ on $\mathfrak{l}$ in certain special cases and find that the
automorphism group $\Psi \left( \mathbb{L}\right) \cong SO\left( n+1\right)
\times U\left( 1\right) $.

In the more general case of $\mathbb{L=}S^{m}\times N$, we compute the
brackets and the Jacobi identity, finding that for instance, in the case of $%
N=S^{n}\times S^{1}$, for some values of $s\in \mathbb{L}$, then bracket $%
b^{\left( s\right) }$ does not satisfy the Jacobi identity, so generally,
parallelizable manifolds do indeed induce an algebraic structure that is
more general than a Lie algebra or a Lie triple system.

Overall, we see that the framework presented in this paper allows to
associate certain algebraic structures to parallelized manifolds. This paves
the way towards using such algebraic invariants to study and classify
parallelizable manifolds. Moreover, this approach is also easily adaptable
to the broader class of stably parallelizable manifolds as well, as the
example of $S^{m}\times S^{1}$ shows.

\subsection*{Acknowledgements}

This work was supported by the National Science Foundation [DMS-1811754].

\section{Parallelized manifolds}

\setcounter{equation}{0}\label{sectPar}

Suppose  $\mathbb{L}$ is a connected smooth real $n$-dimensional manifold,
with a trivializable tangent bundle $T\mathbb{L}\cong \mathbb{L}\times 
\mathbb{R}^{n}.$ Now given an $n$-dimensional real vector space $\mathfrak{l}%
,$ the global trivialization of $T\mathbb{L}$ induces a diffeomorphism $\rho
:\mathbb{L}\times \mathfrak{l}\longrightarrow T\mathbb{L},$ such that%
\begin{equation}
\mathbb{L}\times \mathfrak{l}\ni \left( p,\xi \right) \mapsto \rho
_{p}\left( \xi \right) \in T_{p}\mathbb{L},  \label{diffeopho}
\end{equation}%
with $\rho _{p}:\mathfrak{l}\longrightarrow T_{p}\mathbb{L}$ being a linear
isomorphism for each $p\in \mathbb{L}$.

\begin{definition}
The \emph{fundamental vector field for }$\xi \in \mathfrak{l}$ of $T\mathbb{L%
}$ is defined to be the smooth vector field $\rho \left( \xi \right) \in
\Gamma \left( T\mathbb{L}\right) $.
\end{definition}

In particular, a choice of basis on $\mathfrak{l}$ induces a global frame of 
$T\mathbb{L},$ or equivalently a section of the corresponding frame bundle $F%
\mathbb{L}.$ A change of trivialization is equivalent to a $GL\left( \mathbb{%
R},n\right) $ gauge transformation of $F\mathbb{L},$ while a change of basis
of $\mathfrak{l}$ corresponds to the right action of $GL\left( \mathbb{R}%
,n\right) $ on the principal bundle $F\mathbb{L}.$

More explicitly, suppose $\left\{ X_{i}\right\} _{i=1}^{n}$ is a frame of $T%
\mathbb{L}.$ Then, assuming we have a basis $\left\{ e_{i}\right\}
_{i=1}^{n} $ on $\mathfrak{l},$ define the corresponding trivialization $%
\rho $ so that $\rho \left( e_{i}\right) =X_{i}$ and hence%
\begin{equation*}
\mathbb{L}\times \mathfrak{l}\ni \left( p,\xi \right) \mapsto \xi ^{i}\left.
X_{i}\right\vert _{p}\in T_{p}\mathbb{L}.
\end{equation*}

\begin{definition}
The triple $\left( \mathbb{L},\mathfrak{l},\rho \right) $, as above, will be
called a \emph{parallelized manifold}, i.e. a parallelizable manifold with a
fixed trivialization.
\end{definition}

\begin{remark}
The above definition is almost equivalent to the notion of \emph{absolute
parallelism}. Absolute parallelism is defined as a smooth family of linear
isomorphisms $\phi _{pq}:T_{q}\mathbb{L}\longrightarrow T_{p}\mathbb{L}$ for
each pair of points $p,q\in \mathbb{L}.$ Given an absolute parallelism and
fixing one of the tangent spaces as th vector space $\mathfrak{l}$ gives the
parallelized manifold definition. Conversely, given the linear maps $\rho
_{p}$ from (\ref{diffeopho}), an absolute parallelism is defined as $\phi
_{pq}=\rho _{p}\rho _{q}^{-1}.$
\end{remark}

\begin{example}
Any Lie group can be parallelized via left translation or right translation.
In that case, $\mathfrak{l}$ corresponds to the Lie algebra, and $\rho $ is
either the left or right translation map.
\end{example}

\begin{example}
Any smooth loop (i.e. a smooth quasigroup with identity and smooth products
and quotients) is parallelizable, in the same way a Lie group \cite%
{GrigorianLoops}. In that case, $\mathfrak{l}$ corresponds to the tangent
algebra at identity. A specific example is $S^{7}$, regarded as the loop of
unit octonions.
\end{example}

\begin{example}
Any orientable $3$-dimensional manifold is parallelizable. For example, $%
S^{2}\times S^{1}$ and $3$-dimensional lens spaces $L\left( p;q\right) $, as
well as $S^{3},$ but this is of course also a Lie group. 
\end{example}

\begin{example}
A $4$-manifold is parallelizable if and only if the Stiefel-Whitney classes $%
w_{1},$ $w_{2}$ vanish, the Euler characteristic vanishes, and the first
Pontryagin class vanishes.
\end{example}

\begin{example}
Products of parallelizable manifolds are parallelizable.
\end{example}

\begin{example}
\label{exProdSphere}As shown by Kervaire in \cite{KervaireSpheres}, a
product of spheres, where at least one of the spheres is odd-dimensional, is
parallelizable. Explicit parallelizations of products of spheres have been
demonstrated in \cite{Parton1,Parton2}.
\end{example}

\begin{remark}
In general, parallelizability of a manifold is a property of the smooth
structure, rather than the underlying topological space. Indeed, as shown by
Milnor \cite[Corollary 1]{MilnorICM}, one may have two smooth manifolds that
are homeomorphic, but one is parallelizable, while the other one is not.
However, in certain situations, the parallelizability does reduce to
topological considerations. A manifold is known as \emph{stably
parallelizable} if its stable tangent bundle $TM\oplus \varepsilon $, where $%
\varepsilon $ is a trivial rank 1 bundle, is trivial. Such manifolds are
also known as \emph{framed manifolds} or $\pi $-manifolds. Stable
parallelizability is in some cases equivalent to the vanishing of certain
characteristic classes \cite{HatcherVB}. As shown in \cite{BredonPimanifolds}%
, an $n$-dimensional stably parallelizable manifold is parallelizable if and
only if $S^{n}$ is parallelizable. This of course happens for $n=1,3,7$.
Hence in these dimensions, parallelizability is equivalent to stable
parallelizability.
\end{remark}

To investigate properties of parallelizable manifolds using approaches from
Riemannian geometry, we may use $\rho $ to define a metric and a connection
on $\mathbb{L}$.

\begin{definition}
Given the triple $\left( \mathbb{L},\mathfrak{l},\rho \right) $ and any
inner product $\left\langle {}\right\rangle $ on $\mathfrak{l},$ define the
metric $g$ on $\mathbb{L}$ as the pullback metric with respect to $\rho
^{-1},$ i.e. for $s\in \mathbb{L},$ and $X_{s},Y_{s}\in T_{s}\mathbb{L},$ 
\begin{equation}
g_{s}\left( X_{s},Y_{s}\right) =\left\langle \rho _{s}^{-1}\left(
X_{p}\right) ,\rho _{s}^{-1}\left( Y_{p}\right) \right\rangle .
\label{gsdef}
\end{equation}
\end{definition}

\begin{definition}
Given the triple $\left( \mathbb{L},\mathfrak{l},\rho \right) ,$ define the
flat connection $\nabla =\left( \rho ^{-1}\right) ^{\ast }d,$ so that 
\begin{equation}
\nabla =\rho \circ d\circ \rho ^{-1}.  \label{flatconn}
\end{equation}
\end{definition}

\begin{lemma}
The metric and the flat connection satisfy the following properties.

\begin{enumerate}
\item $\nabla $ is a metric connection with respect to $g.$

\item Every fundamental vector field has constant norm.
\end{enumerate}
\end{lemma}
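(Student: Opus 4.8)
The plan is to reduce both statements to the single observation that the fundamental vector fields simultaneously constitute the $\nabla$-parallel frame and a $g$-orthonormal frame.

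First I would unwind the compact definition $\nabla = \rho \circ d \circ \rho^{-1}$ from (\ref{flatconn}). Every $V \in \Gamma\left(T\mathbb{L}\right)$ is uniquely $V = \rho(v)$ for the smooth $\mathfrak{l}$-valued function $v = \rho^{-1}(V)$ recording the components of $V$ in the fundamental frame, and then $\nabla_W V = \rho\left(W(v)\right)$; that is, $\nabla$ differentiates frame components while treating the fundamental frame as parallel. Applying this to $V = \rho(\xi)$ with $\xi \in \mathfrak{l}$ fixed yields a \emph{constant} component function $v \equiv \xi$, so that $\nabla \rho(\xi) = 0$. Thus, after fixing a basis $\{e_i\}$ of $\mathfrak{l}$, the global frame $X_i = \rho(e_i)$ satisfies $\nabla X_i = 0$.

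The second property is then immediate from (\ref{gsdef}) together with the fact that $\rho_p$ is a linear isomorphism: the pointwise squared norm of a fundamental field is
\begin{equation*}
g_p\left(\rho_p(\xi), \rho_p(\xi)\right) = \left\langle \rho_p^{-1}\rho_p(\xi), \rho_p^{-1}\rho_p(\xi) \right\rangle = \langle \xi, \xi \rangle,
\end{equation*}
which is independent of $p$, so $\rho(\xi)$ has constant norm. For the first property I would take $\{e_i\}$ orthonormal for $\langle\,,\rangle$, so that by (\ref{gsdef}) the fundamental frame is $g$-orthonormal, with $g(X_i, X_j) = \langle e_i, e_j\rangle = \delta_{ij}$ constant. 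Expanding arbitrary fields as $U = u^i X_i$, $V = v^j X_j$ and using $\nabla X_i = 0$ to obtain $\nabla_W U = W(u^i) X_i$, a direct comparison shows that $W\left(g(U,V)\right)$ equals $g(\nabla_W U, V) + g(U, \nabla_W V)$ term by term, which is exactly the statement of metric compatibility. Conceptually this is just the standard fact that any connection admitting a global parallel orthonormal frame is metric, with $\{X_i\}$ serving as such a frame.

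Since all the calculations are entirely routine, I do not expect a genuine obstacle; the only step requiring care is the correct reading of the symbolic identity $\nabla = \rho \circ d \circ \rho^{-1}$, namely recognizing that it renders the fundamental frame parallel, after which both claims follow with essentially no further work.
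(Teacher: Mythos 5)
Your proposal is correct and is in substance the same argument as the paper's: both reduce everything to the fact that $\rho$ carries the constant inner product on $\mathfrak{l}$ to $g$ and carries $d$ to $\nabla$, so the Leibniz rule for $\langle\cdot,\cdot\rangle$ gives metric compatibility and the constancy of $\langle\xi,\xi\rangle$ gives constant norm. The only cosmetic difference is that the paper runs the metric-compatibility computation basis-free on $\left\langle \rho^{-1}X,\rho^{-1}Y\right\rangle$, whereas you phrase it via a parallel $g$-orthonormal frame; the content is identical.
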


\begin{proof}
Both of these properties follow immediately from the definitions.

\begin{enumerate}
\item Suppose $X$ and $Y$ are two vector fields, then 
\begin{eqnarray*}
g\left( \nabla X,Y\right) +g\left( X,\nabla Y\right) &=&g\left( \rho \left(
d\left( \rho ^{-1}X\right) \right) ,Y\right) \\
&&+g\left( X,\rho \left( d\left( \rho ^{-1}Y\right) \right) \right) \\
&=&\left\langle d\left( \rho ^{-1}X\right) ,\rho ^{-1}Y\right\rangle
+\left\langle \rho ^{-1}X,d\left( \rho ^{-1}Y\right) \right\rangle \\
&=&d\left( \left\langle \rho ^{-1}X,\rho ^{-1}Y\right\rangle \right) \\
&=&d\left( g\left( X,Y\right) \right) ,
\end{eqnarray*}%
hence $\nabla $ is metric.

\item Suppose $X=\rho \left( \xi \right) $ is a fundamental vector field,
then it is nowhere zero, and 
\begin{equation*}
d\left( \left\Vert X\right\Vert ^{2}\right) =d\left( g\left( \rho \left( \xi
\right) ,\rho \left( \xi \right) \right) \right) =d\left( \left\langle \xi
,\xi \right\rangle \right) =0,
\end{equation*}%
hence $\left\Vert X\right\Vert $ is constant.
\end{enumerate}
\end{proof}

\begin{remark}
Given a vector field $X\in $ $\Gamma \left( T\mathbb{L}\right) $ and $s\in 
\mathbb{L},$ consider the maximal integral curve $\gamma _{X,s}$ on $L$ of
the vector field $X$ through a point $s\in \mathbb{L}.$ If $\mathbb{L}$ is
compact, then every vector field on $\mathbb{L}$ is complete, and hence $%
\gamma _{X,s}\left( t\right) $ is defined for all $t.$ If $\mathbb{L}$ is
non-compact, consider the global frame $\left\{ X_{i}\right\} $ on $\mathbb{L%
}.$ Each $X_{i}$ is nowhere-vanishing, and given an arbitrary complete
Riemannian metric on $\mathbb{L}$ (which always exists if $\mathbb{L}$ is
connected \cite{NomizuComplete}) we can rescale each of these vector fields
to be of unit norm everywhere. Then, as it is well-known, unit norm vector
fields on a complete Riemannian manifold are complete. Hence, by normalizing
the global frame, we may assume that the fundamental vector fields are
complete. Thus, for non-compact manifolds, without loss of generality, we'll
consider only \emph{complete }trivializations, i.e. those for which the
fundamental vector fields are complete. Then, if $X$ is fundamental, then $%
\gamma _{X,s}\left( t\right) $ is defined for all $t.$ We will assume that $%
\mathbb{L}$ is connected or compact.
\end{remark}

Let $\xi \in \mathfrak{l}$ and define the flow diffeomorphisms $\Phi _{\xi
,t}:\mathbb{L}\longrightarrow \mathbb{L}$ of $\xi $ via

\begin{equation}
\Phi _{\xi ,t}\left( s\right) =\gamma _{\rho \left( \xi \right) ,s}\left(
t\right) ,  \label{PhiX}
\end{equation}%
with $\gamma _{\rho \left( \xi \right) ,s}\left( 0\right) =s.$ By definition
of integral curves, $\Phi _{\xi ,t}\left( s\right) $ is the solution of the
following initial value problem:

\begin{equation}
\left\{ 
\begin{array}{c}
\frac{dp\left( t\right) }{dt}=\left. \rho \left( \xi \right) \right\vert
_{p\left( t\right) } \\ 
p\left( 0\right) =s%
\end{array}%
\right. .  \label{floweq4}
\end{equation}

Since $\rho \left( \xi \right) $ is assumed to be complete, the maps $\Phi
_{\xi ,t}$ are defined for all $t$, and in particular, for each $a\in 
\mathbb{R}$, $\Phi _{a\xi ,t}=\Phi _{\xi ,at}$, and this motivates the
following definition.

\begin{definition}
The product operation $\mu $ on $\mathfrak{l}$ and $\mathbb{L}$ is the map $%
\mu :\mathfrak{l\times }\mathbb{L}\longrightarrow \mathbb{L}$ defined by 
\begin{equation}
\mu \left( \xi ,s\right) =\Phi _{\xi ,1}\left( s\right) ,  \label{mu}
\end{equation}%
where $\xi \in \mathfrak{l},$ $s\in L.$ In particular, we will denote this
product by $\xi \cdot s.$ Define the left and right product maps:

\begin{enumerate}
\item For each $\xi \in \mathfrak{l},$ $L_{\xi }:\mathbb{L}\longrightarrow 
\mathbb{L}$

\item For each $s\in L,$ $R_{s}:\mathfrak{l}\longrightarrow \mathbb{L}.$
\end{enumerate}
\end{definition}

\begin{example}
Suppose $\mathbb{L}$ is a Lie group and $\rho $ is the right-invariant
trivialization of the tangent bundle. Then, $\mathfrak{l}$ is just the
tangent space at identity, i.e. the corresponding Lie algebra, and $\xi
\cdot s=\exp \left( \xi \right) s,$ where $\exp $ is the standard Lie
algebra exponential map.
\end{example}

\begin{lemma}
\label{lemProps}Let $\xi ,\eta \in \mathfrak{l}$. The product operation has
the following properties:

\begin{enumerate}
\item For $t_{1},t_{2}\in \mathbb{R},$ $\left( t_{1}\xi \right) \cdot \left(
\left( t_{2}\xi \right) \cdot s\right) =\left( \left( t_{1}+t_{2}\right) \xi
\right) \cdot s.$

\item For any $s\in L,$ $0\cdot s=s.$

\item $L_{\xi }$ is a diffeomorphism with $L_{\xi }^{-1}=L_{-\xi }.$

\item $\left. dR_{s}\right\vert _{0}=\rho _{s}$.

\item For each $s\in L,$ $R_{s}$ is a local diffeomorphism, and hence the
right quotient $R_{s}^{-1}$ is defined in some neighborhood of $s.$

\item Suppose $p\left( t\right) $ is a curve in $\mathbb{L}$ with $p\left(
0\right) =p$. Assuming that $p/s$ is defined, we have%
\begin{equation}
\left. \frac{d}{dt}R_{t\xi \cdot s}^{-1}\left( p\left( t\right) \right)
\right\vert _{t=0}=\left. \frac{d}{dt}p\left( t\right) /s\right\vert
_{t=0}-\left. \frac{d}{dt}\left( p/s\right) \circ _{s}\left( t\xi \right)
.\right\vert _{t=0}  \label{rinvpt}
\end{equation}
\end{enumerate}
\end{lemma}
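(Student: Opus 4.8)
The plan is to derive properties 1--3 from the one-parameter group law for the flow of $\rho(\xi)$, properties 4--5 from the inverse function theorem, and property 6 from implicit differentiation of the relation defining the right quotient.

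For property 1, I would use that $\rho(\xi)$ is complete, so $\Phi_{\xi,t}$ is a one-parameter group, $\Phi_{\xi,t_1}\circ\Phi_{\xi,t_2}=\Phi_{\xi,t_1+t_2}$; together with the rescaling $\Phi_{a\xi,t}=\Phi_{\xi,at}$ noted before (\ref{mu}), this gives $(t_1\xi)\cdot((t_2\xi)\cdot s)=\Phi_{\xi,t_1}(\Phi_{\xi,t_2}(s))=\Phi_{\xi,t_1+t_2}(s)=((t_1+t_2)\xi)\cdot s$. Property 2 holds because $\rho_s$ is linear, so $\rho(0)$ is the zero field, whose flow is the identity, giving $0\cdot s=\Phi_{0,1}(s)=s$. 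For property 3, taking $t_1=-1,t_2=1$ in property 1 yields $(-\xi)\cdot(\xi\cdot s)=0\cdot s=s$ and symmetrically $\xi\cdot((-\xi)\cdot s)=s$, so $L_{-\xi}$ is a two-sided inverse of $L_\xi$; smoothness of both maps (smooth dependence of flows on initial data) then makes $L_\xi$ a diffeomorphism with $L_\xi^{-1}=L_{-\xi}$.

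For property 4, I would compute $dR_s|_0$ on $\eta\in\mathfrak{l}$ by differentiating $t\mapsto R_s(t\eta)=(t\eta)\cdot s=\Phi_{\eta,t}(s)=\gamma_{\rho(\eta),s}(t)$; by the initial value problem (\ref{floweq4}) its velocity at $t=0$ is $\rho(\eta)|_s=\rho_s(\eta)$, so $dR_s|_0=\rho_s$. Property 5 is then immediate: since $\rho_s$ is a linear isomorphism, the inverse function theorem applied to $R_s$ at $0$ (with $R_s(0)=s$) shows $R_s$ is a local diffeomorphism onto a neighborhood of $s$, so $p/s:=R_s^{-1}(p)$ is defined there.

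Property 6 is the main work. Setting $\zeta(t)=R_{t\xi\cdot s}^{-1}(p(t))$, the defining relation is $\zeta(t)\cdot(t\xi\cdot s)=p(t)$. Writing $F(\zeta,t)=\zeta\cdot(t\xi\cdot s)$, at $t=0$ we have $\zeta(0)=p/s$ and $\partial_\zeta F|_{(\zeta(0),0)}=dR_s|_{p/s}$ (since $F(\zeta,0)=R_s(\zeta)$). Differentiating $F(\zeta(t),t)=p(t)$ at $t=0$ and solving gives
\[
\zeta'(0)=(dR_s|_{p/s})^{-1}\!\left[\,p'(0)-\left.\frac{d}{dt}\right|_{t=0}\!\big((p/s)\cdot(t\xi\cdot s)\big)\right].
\]
The first term equals $\frac{d}{dt}|_{t=0}R_s^{-1}(p(t))=\frac{d}{dt}|_{t=0}\,p(t)/s$. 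For the second, the definition of the local product gives $(p/s)\circ_s(t\xi)=R_s^{-1}\big((p/s)\cdot(t\xi\cdot s)\big)$; differentiating this and pulling the derivative through $R_s^{-1}$ via $(dR_s|_{p/s})^{-1}$ (valid because at $t=0$ the argument is $p$) shows the second term equals $\frac{d}{dt}|_{t=0}(p/s)\circ_s(t\xi)$. Substituting both identifications yields (\ref{rinvpt}). I expect the main obstacle to be exactly this last part: one must track the two occurrences of $t$ (in the base point $t\xi\cdot s$ and in the argument $p(t)$) separately, invoke property 5 to guarantee $dR_s|_{p/s}$ is invertible on the relevant neighborhood, and correctly match the implicit-differentiation output with the definition of $\circ_s$; the first five parts are routine bookkeeping with the flow and the inverse function theorem.
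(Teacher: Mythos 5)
Your proposal is correct and follows essentially the same route as the paper: items 1--3 from the flow group law, item 4 by differentiating $t\mapsto R_{s}(t\eta)=\Phi_{\eta,t}(s)$, item 5 by the inverse function theorem, and item 6 by differentiating the identity $\zeta(t)\cdot(t\xi\cdot s)=p(t)$ and splitting into the two partial contributions. If anything you are slightly more careful than the paper in item 6, since you correctly invert $\left. dR_{s}\right\vert_{p/s}$ (rather than $\rho_{s}=\left. dR_{s}\right\vert_{0}$) when matching both terms with $\frac{d}{dt}\,p(t)/s$ and $\frac{d}{dt}\,(p/s)\circ_{s}(t\xi)$.
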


\begin{proof}
The first three properties immediately follow from the definition and the
properties of the flow diffeomorphism $\Phi _{\xi ,t}$. For the fourth item,
let $\xi \in \mathfrak{l}$ and consider the straight line from the origin $%
t\xi .$ Then, 
\begin{eqnarray*}
\left. dR_{s}\right\vert _{0}\left( \xi \right)  &=&\left. \frac{d}{dt}%
R_{s}\left( t\xi \right) \right\vert _{t=0} \\
&=&\left. \frac{d}{dt}\Phi _{\xi ,t}\left( s\right) \right\vert _{t=0} \\
&=&\rho _{s}\left( \xi \right) ,
\end{eqnarray*}%
by definition of $R_{s}.$

This shows that $\left. dR_{s}\right\vert _{0}=\rho _{s},$ hence is an
isomorphism, and thus by the Inverse Function Theorem is a local
diffeomorphism.

Now for sufficiently small $t$, consider 
\begin{equation*}
p\left( t\right) =\left( R_{t\xi \cdot s}^{-1}\left( p\left( t\right)
\right) \right) \cdot \left( t\xi \cdot s\right)
\end{equation*}%
Then, differentiating both sides, and noting that $\left. R_{t\xi \cdot
s}^{-1}\left( p\left( t\right) \right) \right\vert _{t=0}=p/s$, we have 
\begin{eqnarray*}
\left. \frac{d}{dt}p\left( t\right) \right\vert _{t=0} &=&\left. \frac{d}{dt}%
\left( R_{t\xi \cdot s}^{-1}\left( p\left( t\right) \right) \right) \cdot
\left( t\xi \cdot s\right) \right\vert _{t=0} \\
&=&\left. \frac{d}{dt}\left( R_{t\xi \cdot s}^{-1}\left( p\left( t\right)
\right) \right) \cdot s\right\vert _{t=0}+\left. \frac{d}{dt}\left( \left(
p/s\right) \cdot \left( t\xi \cdot s\right) \right) \right\vert _{t=0}
\end{eqnarray*}%
Applying $\rho _{s}^{-1}$ to both sides, and noting that $\left.
dR_{s}\right\vert _{0}=\rho _{s}$, we find 
\begin{equation*}
\left. \frac{d}{dt}\left( R_{t\xi \cdot s}^{-1}\left( p\left( t\right)
\right) \right) \right\vert _{t=0}=\left. \frac{d}{dt}p\left( t\right)
/s\right\vert _{t=0}-\left. \frac{d}{dt}\left( p/s\right) \circ _{s}\left(
t\xi \right) .\right\vert _{t=0}
\end{equation*}
\end{proof}

\begin{remark}
Lemma \ref{lemProps} shows that for each $s\in \mathbb{L},$ we may define a 
\emph{local loop structure} on\emph{\ }$\mathfrak{l}$. Indeed, suppose $\xi
,\eta \in \mathfrak{l}$ are in a sufficiently small neighborhood of $0\in 
\mathfrak{l}.$ Then, define 
\begin{equation}
\eta \circ _{s}\xi =\left( \eta \cdot \left( \xi \cdot s\right) \right)
/s\in \mathfrak{l.}  \label{stildeprod}
\end{equation}%
With respect to this product, we see that $0\in \mathfrak{l}$ is a two-sided
identity element. The right quotient with respect to $\circ _{s}$ is defined
by 
\begin{equation}
\xi /_{s}\eta =\left( \xi \cdot s\right) /\left( \eta \cdot s\right) ,
\label{srq}
\end{equation}%
while the left quotient is given by 
\begin{equation}
\xi \backslash _{s}\eta =\left( \xi \backslash \left( \eta \cdot s\right)
\right) /s.  \label{slq}
\end{equation}%
Equivalently, we may define a product on a neighborhood of $s$. Given $p,q$
in a sufficiently small neighborhood of $s\in \mathbb{L}$, define 
\begin{equation}
p\circ _{s}q=\left( p/s\right) \cdot q.  \label{pqs}
\end{equation}%
Note that if $p=\xi \cdot s$ and $q=\eta \cdot s$, then $p\circ _{s}q=\left(
\xi \circ _{s}\eta \right) \cdot s.$ Similarly, the quotients are also
defined.
\end{remark}

\begin{lemma}
The product $\circ _{s}$ is power-associative. In particular, for any $\xi
\in \mathfrak{l},$ the powers $\xi ^{n}=n\xi $ are defined unambiguously and
independently of $s$.
\end{lemma}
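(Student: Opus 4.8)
The plan is to reduce the entire statement to the one-parameter property recorded in Lemma~\ref{lemProps}(1), namely $\left(t_1\xi\right)\cdot\left(\left(t_2\xi\right)\cdot s\right)=\left(\left(t_1+t_2\right)\xi\right)\cdot s$ for real $t_1,t_2$. The idea is that the loop product $\circ_s$, once restricted to the scalar multiples of a single fixed $\xi$, is nothing but addition of the scalar coefficients; power-associativity and the formula $\xi^n=n\xi$ then drop out formally, and independence of $s$ is visible from the answer.

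First I would prove the central identity: for $a,b\in\mathbb{R}$ small enough that the quotients below are defined,
\begin{equation*}
\left(a\xi\right)\circ_s\left(b\xi\right)=\left(a+b\right)\xi .
\end{equation*}
To see this, unwind the definition (\ref{stildeprod}): $\left(a\xi\right)\circ_s\left(b\xi\right)=\bigl(\left(a\xi\right)\cdot\left(\left(b\xi\right)\cdot s\right)\bigr)/s$. Applying Lemma~\ref{lemProps}(1) with $t_1=a$ and $t_2=b$ gives $\left(a\xi\right)\cdot\left(\left(b\xi\right)\cdot s\right)=\left(\left(a+b\right)\xi\right)\cdot s=R_s\!\left(\left(a+b\right)\xi\right)$, using $R_s(\zeta)=\zeta\cdot s$. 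Since $p/s=R_s^{-1}(p)$ and, by Lemma~\ref{lemProps}(5), $R_s^{-1}$ inverts $R_s$ on a neighborhood of $0\in\mathfrak{l}$, applying $/s$ recovers $\left(a+b\right)\xi$, which is the claimed identity.

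Granting this identity, the subset $\{t\xi\}$ is closed under $\circ_s$ near $0$, and the bijection $t\mapsto t\xi$ transports $\circ_s$ to ordinary addition on $\mathbb{R}$. Because $(\mathbb{R},+)$ is associative (indeed abelian), the subloop of $(\mathfrak{l},\circ_s)$ generated by $\xi$ is associative: by induction any $n$-fold $\circ_s$-product of copies of $\xi$, in any parenthesization, evaluates to $n\xi$. This is exactly power-associativity for the single generator $\xi$, so the notation $\xi^n:=n\xi$ is unambiguous. Finally, since the value $n\xi$ refers only to the vector-space structure of $\mathfrak{l}$ and not to $s$, the powers are manifestly independent of $s$.

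The only delicate point is domain bookkeeping rather than algebra: $R_s^{-1}$ is merely a local inverse, so the identity $\left(a\xi\right)\circ_s\left(b\xi\right)=\left(a+b\right)\xi$ is valid only while $\left(a+b\right)\xi$ lies in the neighborhood of $0$ on which $R_s$ is a diffeomorphism. For a given $n$ one therefore restricts to $\xi$ small enough that $\xi,2\xi,\dots,n\xi$ all lie in this neighborhood, which is the precise sense in which the \emph{local} powers are defined. I expect this local-domain verification, and not the computation itself, to be the main (and rather minor) obstacle.
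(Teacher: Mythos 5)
Your proposal is correct and follows essentially the same route as the paper: establish $\left(t_{1}\xi\right)\circ_{s}\left(t_{2}\xi\right)=\left(t_{1}+t_{2}\right)\xi$ from the flow property of Lemma \ref{lemProps}(1) and the definition (\ref{stildeprod}), then conclude by induction that every parenthesization of an $n$-fold product of $\xi$ equals $n\xi$, which is visibly independent of $s$. Your extra attention to the local domain on which $R_{s}^{-1}$ is defined is a reasonable refinement that the paper leaves implicit, but it does not change the argument.
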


\begin{proof}
Let $\xi \in \mathfrak{l.}$ Firstly, for real numbers $t_{1}$ and $t_{2}$
consider 
\begin{eqnarray*}
\left( t_{1}\xi \right) \circ _{s}\left( t_{2}\xi \right) &=&\left( \left(
t_{1}\xi \right) \cdot \left( \left( t_{2}\xi \right) \cdot s\right) \right)
/s \\
&=&\left( t_{1}+t_{2}\right) \xi .
\end{eqnarray*}%
Hence $\xi \circ _{s}\xi =2\xi .$ Now, for third powers 
\begin{equation*}
\left( \xi \circ _{s}\xi \right) \circ _{s}\xi =\left( 2\xi \right) \circ
_{s}\xi =3\xi =\xi \circ _{s}\left( \xi \circ _{s}\xi \right) .
\end{equation*}%
By induction we can see that for any integer power $n$, $\xi ^{n}=n\xi .$
\end{proof}

\begin{theorem}
\label{thmProd}Suppose $\mathbb{L}$ is connected and $s\in \mathbb{L},$ then
any element $p\in \mathbb{L}$ can be written as $p=\xi _{1}\cdot \left( \xi
_{2}\cdot ...\left( \xi _{k}\cdot s\right) \right) ,$ for some finite
sequence $\left\{ \xi _{1},...,\xi _{k}\right\} $ in $\mathfrak{l}.$
\end{theorem}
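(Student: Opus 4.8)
The plan is to show that the set $A \subseteq \mathbb{L}$ of points reachable from $s$ by finitely many left-multiplications is nonempty, open, and closed, and then invoke connectedness of $\mathbb{L}$ to conclude $A = \mathbb{L}$. Concretely, I would set
\[
A = \left\{ \xi_1 \cdot \left( \xi_2 \cdot \ldots \left( \xi_k \cdot s \right) \right) : k \geq 0,\ \xi_1, \ldots, \xi_k \in \mathfrak{l} \right\},
\]
where the empty product ($k=0$) is read as $s$ itself; equivalently $s = 0 \cdot s \in A$ by item 2 of Lemma \ref{lemProps}, so $A$ is nonempty, and the theorem is precisely the claim $A = \mathbb{L}$.

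For openness, I would take any $p \in A$ and use item 5 of Lemma \ref{lemProps}: since $R_p : \mathfrak{l} \to \mathbb{L}$ satisfies $R_p(0) = p$ and $\left. dR_p \right\vert_0 = \rho_p$ is an isomorphism, the Inverse Function Theorem provides an open neighborhood $U$ of $0 \in \mathfrak{l}$ that $R_p$ maps diffeomorphically onto an open neighborhood $V_p$ of $p$. Every $q \in V_p$ then has the form $q = \eta \cdot p$ for some $\eta \in U$, and prepending $\eta$ to a representing sequence for $p$ exhibits $q \in A$. Hence $V_p \subseteq A$, and $A$ is open.

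The crux is closedness, and the essential ingredient here is the invertibility of left multiplication, $L_\xi^{-1} = L_{-\xi}$ (item 3 of Lemma \ref{lemProps}). I would show instead that $\mathbb{L} \setminus A$ is open. Given $q \notin A$, form the same neighborhood $V_q = R_q(U)$ as above. If some $p \in V_q$ also lay in $A$, then $p = \eta \cdot q$ for some $\eta \in U$, whence $q = (-\eta) \cdot p$ by item 3; prepending $-\eta$ to a representing sequence for $p$ would force $q \in A$, contradicting $q \notin A$. Therefore $V_q \cap A = \emptyset$, so the complement of $A$ is open and $A$ is closed.

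Finally, since $A$ is a nonempty subset of the connected manifold $\mathbb{L}$ that is both open and closed, it equals $\mathbb{L}$, which is exactly the assertion. I expect the only delicate point to be ensuring that the product is globally defined — that $\eta \cdot p$ exists for every $\eta \in \mathfrak{l}$ and every $p \in \mathbb{L}$ — which is guaranteed by the standing completeness assumption on the fundamental vector fields. The reversibility supplied by item 3 is what upgrades a merely "open orbit" argument into a genuine clopen decomposition, and is the feature I would be careful not to omit.
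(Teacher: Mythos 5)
Your proof is correct and follows essentially the same route as the paper: the same clopen set of reachable points, openness via the local diffeomorphism $R_p$ from item 5 of Lemma \ref{lemProps}, and closedness via the reversal $L_\xi^{-1}=L_{-\xi}$. The only cosmetic difference is that the paper verifies closedness with a convergent sequence $p_i\to p$ landing in $V_p$, whereas you show the complement is open directly; the underlying mechanism is identical.
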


\begin{proof}
Let $W\subset \mathbb{L}$ be the subset of elements of $\mathbb{L}$ that can
be represented as a product of $s$ by some finite sequence of elements of $%
\mathfrak{l}$ on the left. We will show that $W$ is both closed and open,
and hence $\mathbb{L=}W.$

First note that since for each $p\in \mathbb{L}$, $R_{p}$ is a local
diffeomorphism of a neighborhood of $0\in \mathfrak{l}$ to a neighborhood of 
$p\in \mathbb{L},$ we see that there exists an open neighborhood $V_{p}$ of $%
p,$ where each $q\in V_{p}$ is given by $\eta \cdot p$ for some $\eta \in 
\mathbb{L}.$

Now let $w\in W,$ then by the above, there exists an open neighborhood $%
V_{w} $ of $w,$ where each $q\in V_{w}$ is given by $\eta \cdot w.$ In
particular, $V_{w}\subset W,$ and hence $W$ is open.

To show that $W$ is closed, consider a converging sequence $%
p_{i}\longrightarrow p$ with each $p_{i}\in W.$ Then, for $N$ large enough,
we will have $p_{N}\in V_{p}.$ In other words, we will have $p_{N}=\eta
\cdot p,$ for some $\eta \in \mathfrak{l}.$ This implies that $p=\left(
-\eta \right) \cdot p_{N}\in W,$ and thus $W$ is closed.
\end{proof}

\begin{remark}
The above result shows that given a fixed $s\in \mathbb{L},$ any other
element of $\mathbb{L}$ may be reached in a finite number of
\textquotedblleft steps\textquotedblright , i.e. left multiplications by
elements of $\mathfrak{l}.$ Thus, we may define a discrete \textquotedblleft
distance\textquotedblright\ function $d\left( p,q\right) $ for points $%
p,q\in \mathbb{L}$ given by the minimum number of steps needed to reach $q$
from $p.$
\end{remark}

Given a trivialization $\rho $ and fixing a point $s\in \mathbb{L},$ we
induce a bracket on $\mathfrak{l}.$ Let $\xi ,\eta \in \mathfrak{l}.$ Then, 
\begin{equation}
\left[ \xi ,\eta \right] ^{\left( \rho ,s\right) }=-\rho _{s}^{-1}\left(
\left. \left[ \rho \left( \xi \right) ,\rho \left( \eta \right) \right]
\right\vert _{s}\right) .  \label{lbrack}
\end{equation}%
Note that here we have the negative sign to be compatible with \cite%
{GrigorianLoops} and also to follow the same convention as right-invariant
vector fields on Lie groups. As before, if $\left\{ X_{i}\right\} _{i=1}^{n}$
is a frame of $T\mathbb{L},$ suppose the brackets of the frame elements are
given by 
\begin{equation}
\left[ X_{i},X_{j}\right] =c_{\ ij}^{k\ }X_{k},  \label{sectbrack}
\end{equation}%
where $c_{\ ij}^{k}$ are smooth $\mathbb{F}$-valued functions on $M.$ Then, 
\begin{eqnarray}
\left[ \xi ,\eta \right] ^{\left( \rho ,s\right) } &=&-\rho _{s}^{-1}\left(
\left. \left[ \xi ^{i}X_{i},\eta ^{j}X_{j}\right] \right\vert _{s}\right) .
\\
&=&-\rho _{s}^{-1}\left( \xi ^{i}\eta ^{j}c_{\ ij}^{k}\left( s\right) \left.
X_{k}\right\vert _{s}\right) ,
\end{eqnarray}%
and therefore 
\begin{equation}
\left[ \xi ,\eta \right] ^{\left( \rho ,s\right) }=-\xi ^{i}\eta ^{j}c_{\
ij}^{k}\left( s\right) e_{k}  \label{lbrack2}
\end{equation}%
Define the bracket function 
\begin{equation}
b^{\left( \rho \right) }:\mathbb{L}\longrightarrow \Lambda ^{2}\mathfrak{l}%
^{\ast }\otimes \mathfrak{l}  \label{bfunc}
\end{equation}%
given by 
\begin{equation*}
b^{\left( \rho \right) }\left( s\right) \left( \xi ,\eta \right) =\left[ \xi
,\eta \right] ^{\left( \rho ,s\right) }
\end{equation*}%
for any $s\in \mathbb{L}$ and $\xi ,\eta \in \mathfrak{l}.$ We can also
define the $\func{ad}$ map:%
\begin{equation*}
\func{ad}_{\xi }^{\left( \rho ,s\right) }:\mathfrak{l}\longrightarrow 
\mathfrak{l}
\end{equation*}%
by 
\begin{equation*}
\func{ad}_{\xi }^{\left( \rho ,s\right) }\left( \eta \right) =\left[ \xi
,\eta \right] ^{\left( \rho ,s\right) }.
\end{equation*}

\begin{definition}
Let the \emph{Maurer-Cartan form} $\theta ^{\left( \rho \right) }\in \Omega
^{1}\left( \mathbb{L},\mathfrak{l}\right) $ be an $\mathfrak{l}$-valued $1$%
-form, defined by 
\begin{equation}
\left. \theta ^{\left( \rho \right) }\right\vert _{s}\left( X\right) =\rho
_{s}^{-1}\left( X\right)  \label{MCform1}
\end{equation}%
for any $X\in T_{s}M,$ where $s\in M.$ In particular, for any $\eta \in 
\mathfrak{l},$ given a fundamental vector field $\rho \left( \eta \right) ,$ 
\begin{equation}
\theta ^{\left( \rho \right) }\left( \rho \left( \eta \right) \right) =\eta .
\label{MCform1a}
\end{equation}
\end{definition}

In particular, we can now rewrite (\ref{lbrack}) as 
\begin{equation}
b^{\left( \rho \right) }\left( \theta ^{\left( \rho \right) }\left( X\right)
,\theta ^{\left( \rho \right) }\left( Y\right) \right) =-\theta ^{\left(
\rho \right) }\left( \left[ X,Y\right] \right) .  \label{lbrack1}
\end{equation}

\begin{remark}
To simplify notation, we will drop $\left( \rho \right) $ from $b$ and $%
\theta .$ We still need to remember that these objects depend on the
trivialization $\rho ,$ but once the trivialization is fixed, it is not
necessary to keep writing it.
\end{remark}

\begin{theorem}[Structural Equation]
\label{thmStructeq}The $1$-form $\theta $ satisfies the following relation. 
\begin{equation}
d\theta -\frac{1}{2}b\left( \theta ,\theta \right) =0,  \label{structeq}
\end{equation}%
where $b\left( \theta ,\theta \right) $ is the bracket of $\mathfrak{l}$%
-valued $1$-forms such that for any $X,Y\in T_{p}\mathbb{L}$, $\frac{1}{2}%
\left. b\left( \theta ,\theta \right) \right\vert _{s}\left( X,Y\right) =%
\left[ \theta \left( X\right) ,\theta \left( Y\right) \right] ^{\left(
s\right) }.$
\end{theorem}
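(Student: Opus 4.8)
The plan is to verify the structural equation by testing both sides against \emph{fundamental} vector fields and then invoking tensoriality. Since a $2$-form is a pointwise, $C^{\infty}$-multilinear object, and since the fundamental vector fields $\rho(e_{i})$ associated to a basis $\{e_{i}\}$ of $\mathfrak{l}$ span $T_{s}\mathbb{L}$ at every $s\in\mathbb{L}$, it suffices to show that $d\theta$ and $\tfrac{1}{2}b(\theta,\theta)$ agree on all pairs $\bigl(\rho(\xi),\rho(\eta)\bigr)$ with $\xi,\eta\in\mathfrak{l}$.

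First I would recall the coordinate-free formula for the exterior derivative of an $\mathfrak{l}$-valued $1$-form,
\[
d\theta(X,Y)=X\bigl(\theta(Y)\bigr)-Y\bigl(\theta(X)\bigr)-\theta\bigl([X,Y]\bigr),
\]
valid for arbitrary vector fields $X,Y$. Applying this with $X=\rho(\xi)$ and $Y=\rho(\eta)$, the crucial simplification comes from (\ref{MCform1a}): the function $\theta\bigl(\rho(\eta)\bigr)=\eta$ is the \emph{constant} $\mathfrak{l}$-valued function with value $\eta$, and likewise $\theta\bigl(\rho(\xi)\bigr)=\xi$. Hence the first two terms, being directional derivatives of constant functions, vanish identically, leaving
\[
d\theta\bigl(\rho(\xi),\rho(\eta)\bigr)=-\theta\bigl([\rho(\xi),\rho(\eta)]\bigr).
\]
This mirrors exactly the classical derivation of the Maurer--Cartan equation for right-invariant forms on a Lie group, the fundamental vector fields playing the role of the invariant fields.

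Next I would invoke the relation (\ref{lbrack1}), namely $b\bigl(\theta(X),\theta(Y)\bigr)=-\theta\bigl([X,Y]\bigr)$, with the same fundamental vector fields. Interpreted pointwise at $s$, this reads $-\theta_{s}\bigl([\rho(\xi),\rho(\eta)]|_{s}\bigr)=b^{(\rho)}(s)(\xi,\eta)=[\xi,\eta]^{(s)}$, which by the definition of $b(\theta,\theta)$ given in the statement is precisely $\tfrac{1}{2}\,b(\theta,\theta)|_{s}\bigl(\rho(\xi),\rho(\eta)\bigr)$. Combining this with the previous display yields $d\theta\bigl(\rho(\xi),\rho(\eta)\bigr)=\tfrac{1}{2}\,b(\theta,\theta)\bigl(\rho(\xi),\rho(\eta)\bigr)$ at every $s$, and the tensoriality and spanning argument upgrades this to the identity of $2$-forms $d\theta-\tfrac{1}{2}b(\theta,\theta)=0$.

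The only genuinely delicate point is the bookkeeping around the $s$-dependence: the bracket function $s\mapsto b^{(\rho)}(s)$ is non-constant, so $\tfrac{1}{2}b(\theta,\theta)$ is a well-defined $2$-form only because the statement fixes one and the same base point $s$ for the bracket and for the evaluation of $\theta$. I would therefore take care to interpret (\ref{lbrack1}) strictly pointwise, with $\theta_{s}$ and $b^{(\rho)}(s)$ sharing the same $s$, so that no derivative of the bracket function enters this particular identity. Once that is made explicit, the result follows directly from the characterization of fundamental vector fields as exactly those on which $\theta$ is constant.
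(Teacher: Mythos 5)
Your proposal is correct and follows essentially the same route as the paper: evaluate both sides on fundamental vector fields $\rho(\xi),\rho(\eta)$, observe that $\theta(\rho(\eta))=\eta$ is constant so the first two terms of the exterior-derivative formula vanish, and then apply (\ref{lbrack1}) pointwise. Your added remark on keeping the base point $s$ consistent in $b^{(\rho)}(s)$ and $\theta_{s}$ is a sensible clarification but does not change the argument.
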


\begin{proof}
It is sufficient to check this for fundamental sections of $T\mathbb{L}.$
Let $X=\rho \left( \xi \right) $ and $Y=\rho \left( \eta \right) $ for $\xi
,\eta \in \mathfrak{l}.$ Then from (\ref{MCform1}), 
\begin{eqnarray*}
\left( d\theta \right) \left( X,Y\right) &=&X\left( \theta \left( Y\right)
\right) -Y\left( \theta \left( X\right) \right) -\theta \left( \left[ X,Y%
\right] \right) \\
&=&X\left( \eta \right) -Y\left( \xi \right) -\theta \left( \left[ X,Y\right]
\right) \\
&=&b\left( \theta \left( X\right) ,\theta \left( Y\right) \right) ,
\end{eqnarray*}%
where we have used (\ref{lbrack1}) and the fact that $\eta $ and $\xi $ are
constant.
\end{proof}

Applying the exterior derivative to the structural equation, we find the
generalization of the Jacobi identity:

\begin{theorem}[Generalized Jacobi identity]
The bracket $b$ and the Maurer-Cartan form $\theta $ satisfy:%
\begin{equation}
b\left( \theta ,b\left( \theta ,\theta \right) \right) =\left( db\right)
\left( \theta ,\theta \right) \text{,}  \label{jacid}
\end{equation}%
where exterior derivatives are implied.
\end{theorem}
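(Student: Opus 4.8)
The plan is to obtain (\ref{jacid}) by differentiating the structural equation (\ref{structeq}), exactly as the ordinary Jacobi identity is extracted from the Maurer--Cartan equation on a Lie group, but keeping the extra term forced on us by the fact that the bracket function $b$ is \emph{not} constant on $\mathbb{L}$. Applying $d$ to (\ref{structeq}) and using $d^{2}=0$, the left-hand side $d\left( d\theta \right) $ vanishes, so the entire content of the theorem is the relation $d\left( b\left( \theta ,\theta \right) \right) =0$, which I would then expand by the graded Leibniz rule.

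First I would write $\theta =\theta ^{a}e_{a}$ for a basis $\left\{ e_{a}\right\} $ of $\mathfrak{l}$ and $b=b_{ab}{}^{c}\,e^{a}\otimes e^{b}\otimes e_{c}$, so that $b\left( \theta ,\theta \right) =b_{ab}{}^{c}\,\theta ^{a}\wedge \theta ^{b}\,e_{c}$ is an $\mathfrak{l}$-valued $2$-form whose coefficients $b_{ab}{}^{c}$ are functions on $\mathbb{L}$. Differentiating produces two genuinely different kinds of terms: one in which $d$ falls on the coefficients $b_{ab}{}^{c}$, giving precisely $\left( db\right) \left( \theta ,\theta \right) $, and two in which $d$ falls on a factor $\theta ^{a}$. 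The latter two I would merge using the antisymmetry $b_{ab}{}^{c}=-b_{ba}{}^{c}$ together with the graded-commutativity of the wedge product of a $1$-form and a $2$-form, obtaining $2\,b\left( d\theta ,\theta \right) $. Thus $d\left( b\left( \theta ,\theta \right) \right) =\left( db\right) \left( \theta ,\theta \right) +2\,b\left( d\theta ,\theta \right) $.

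It then remains to feed the structural equation back in, replacing $d\theta $ by $\frac{1}{2}b\left( \theta ,\theta \right) $, which converts $2\,b\left( d\theta ,\theta \right) $ into $b\left( b\left( \theta ,\theta \right) ,\theta \right) $, and to use the antisymmetry of $b$ once more to rewrite this as $-b\left( \theta ,b\left( \theta ,\theta \right) \right) $. Setting the total equal to zero yields $\left( db\right) \left( \theta ,\theta \right) -b\left( \theta ,b\left( \theta ,\theta \right) \right) =0$, which is (\ref{jacid}). As a cross-check, the same identity can be derived coordinate-free by evaluating $d\left( b\left( \theta ,\theta \right) \right) $ on a triple of fundamental vector fields $\rho \left( \xi \right) ,\rho \left( \eta \right) ,\rho \left( \zeta \right) $ and using, as in the proof of Theorem \ref{thmStructeq}, that $\theta $ sends these to the \emph{constant} elements $\xi ,\eta ,\zeta $, so that every directional-derivative term except the one hitting $b$ drops out.

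I expect the difficulty to be purely one of bookkeeping rather than of ideas: one must be careful about the graded signs and, above all, about the convention for the bracket of the $1$-form $\theta $ with the $2$-form $b\left( \theta ,\theta \right) $, so that the two $d\theta $-contributions combine with the correct numerical coefficient and the surviving term appears as $b\left( \theta ,b\left( \theta ,\theta \right) \right) $ with the sign matching (\ref{jacid}). Conceptually the one point worth emphasizing is that $\left( db\right) \left( \theta ,\theta \right) $ is the honest obstruction measuring the non-constancy of the bracket: were $b$ constant, this term would vanish and one would recover the classical Jacobi identity $b\left( \theta ,b\left( \theta ,\theta \right) \right) =0$.
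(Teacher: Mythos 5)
Your proposal is correct and is exactly the route the paper takes: the paper offers no written proof beyond the remark that the identity follows by applying $d$ to the structural equation (\ref{structeq}), and your expansion of $d\left( b\left( \theta ,\theta \right) \right)$ via the graded Leibniz rule, the recombination of the two $d\theta$-terms using the antisymmetry of $b$, and the back-substitution of (\ref{structeq}) supply precisely the bookkeeping the paper omits, with the signs coming out consistent with (\ref{jacid}).
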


Define the \emph{skew-associator} map $a:\mathbb{L}\longrightarrow \Lambda
^{2}\mathfrak{l}^{\ast }\otimes \mathfrak{l}^{\ast }\otimes \mathfrak{l}$
such that for $s\in \mathbb{L}$ and $\xi ,\eta ,\gamma \in \mathfrak{l,}$ 
\begin{equation}
a^{\left( s\right) }\left( \xi ,\eta ,\gamma \right) =\left. d_{\rho \left(
\gamma \right) }b\left( \xi ,\eta \right) \right\vert _{p}.  \label{ap1}
\end{equation}%
Suppose $\xi ,\eta ,\gamma \in \mathfrak{l}.$ Then, substituting $\rho
\left( \xi \right) ,\rho \left( \eta \right) ,\rho \left( \gamma \right) \in
\Gamma \left( T\mathbb{L}\right) $ into (\ref{jacid}) and evaluating at a
point $s\in \mathbb{L}$ gives 
\begin{eqnarray}
\left[ \xi ,\left[ \eta ,\gamma \right] ^{\left( s\right) }\right] ^{\left(
s\right) }+\left[ \eta ,\left[ \gamma ,\xi \right] ^{\left( s\right) }\right]
^{\left( s\right) }+\left[ \gamma ,\left[ \xi ,\eta \right] ^{\left(
s\right) }\right] ^{\left( s\right) } &=&a^{\left( s\right) }\left( \xi
,\eta ,\gamma \right)  \label{jacid2} \\
&&+a^{\left( s\right) }\left( \eta ,\gamma ,\xi \right)  \notag \\
&&+a^{\left( s\right) }\left( \gamma ,\xi ,\eta \right) .  \notag
\end{eqnarray}%
We thus see that the obstruction to satisfying the Jacobi identity is
precisely the cyclic permutation of $a$ or equivalently $db.$ In particular,
if $b$ is constant, then $\mathfrak{l}$ is a Lie algebra. In this case, the
bracket is independent of $p,$ and $\mathfrak{l}$ has a unique bracket that
satisfies the Jacobi identity.

Both the bracket and the skew-associator maps can be related to the product $%
\circ _{s}$on $\mathfrak{l}.$ In particular, as we see below, the bracket is
the infinitesimal commutator of $\circ _{s}$ and $a^{\left( s\right) }$ is a
skew-symmetrization of the infinitesimal associator of $\circ _{s}$.

\begin{theorem}
\label{thmBrackAssoc}Let $\xi ,\eta ,\gamma \in \mathfrak{l},$ and $s\in 
\mathbb{L}$, then 
\begin{subequations}
\begin{eqnarray}
\left[ \xi ,\eta \right] ^{\left( s\right) } &=&\left. \frac{d^{2}}{%
dt_{1}dt_{2}}\left( \left( t_{1}\xi \right) \circ _{s}\left( t_{2}\eta
\right) -\left( t_{2}\eta \right) \circ _{s}\left( t_{1}\xi \right) \right)
\right\vert _{t_{1}=t_{2}=0}  \label{brackcomm} \\
a^{\left( s\right) }\left( \xi ,\eta ,\gamma \right) &=&\left[ \xi ,\eta
,\gamma \right] ^{\left( s\right) }-\left[ \eta ,\xi ,\gamma \right]
^{\left( s\right) }\text{,}  \label{aassoc}
\end{eqnarray}%
\end{subequations}%
where%
\begin{equation*}
\left[ \xi ,\eta ,\gamma \right] ^{\left( s\right) }=\left. \frac{d^{3}}{%
dt_{1}dt_{2}dt_{3}}\left( \left( t_{1}\xi \right) \circ _{s}\left( \left(
t_{2}\eta \right) \circ _{s}\left( t_{3}\gamma \right) \right) -\left(
\left( t_{1}\xi \right) \circ _{s}\left( t_{2}\eta \right) \right) \circ
_{s}\left( t_{3}\gamma \right) \right) \right\vert _{t_{1}=t_{2}=t_{3}=0}.
\end{equation*}
\end{theorem}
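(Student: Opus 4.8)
The plan is to reduce both identities to the classical fact that the Lie bracket of vector fields measures the second-order failure of their flows to commute, and then to relate the basepoint dependence of the bracket to the associator of $\circ_{s}$ through a change-of-basepoint formula. The two formulas are handled in order, with (\ref{brackcomm}) feeding into the proof of (\ref{aassoc}).

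For (\ref{brackcomm}) I would first rewrite the products in terms of flows. Setting $X=\rho(\xi)$, $Y=\rho(\eta)$, the definition (\ref{stildeprod}) together with (\ref{PhiX})--(\ref{mu}) and $\Phi_{a\zeta,t}=\Phi_{\zeta,at}$ gives $(t_{1}\xi)\circ_{s}(t_{2}\eta)=R_{s}^{-1}\!\left(\Phi_{\xi,t_{1}}(\Phi_{\eta,t_{2}}(s))\right)$, where $p/s=R_{s}^{-1}(p)$. For any scalar $f$ a direct computation yields $\partial_{t_{1}}\partial_{t_{2}}\big|_{0}f(\Phi_{\xi,t_{1}}\Phi_{\eta,t_{2}}(s))=(YXf)(s)$, and hence $\partial_{t_{1}}\partial_{t_{2}}\big|_{0}\!\left[f(\Phi_{\xi,t_{1}}\Phi_{\eta,t_{2}}(s))-f(\Phi_{\eta,t_{2}}\Phi_{\xi,t_{1}}(s))\right]=-([X,Y]f)(s)$. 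Applying this componentwise to the $\mathfrak{l}$-valued map $R_{s}^{-1}$ and using $dR_{s}^{-1}|_{s}=\rho_{s}^{-1}$, the inverse of Lemma \ref{lemProps}(4), the mixed derivative of the commutator equals $-\rho_{s}^{-1}([\rho(\xi),\rho(\eta)]|_{s})$, which is exactly $[\xi,\eta]^{(s)}$ by (\ref{lbrack}).

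For (\ref{aassoc}) the key is a change-of-basepoint identity. Since $a^{(s)}(\xi,\eta,\gamma)=d_{\rho(\gamma)}b(\xi,\eta)|_{s}$ is the derivative of the bracket function along $\rho(\gamma)$ by (\ref{ap1}), and $s_{\tau}:=\Phi_{\gamma,\tau}(s)=(\tau\gamma)\cdot s$, I would write $a^{(s)}(\xi,\eta,\gamma)=\frac{d}{d\tau}\big|_{0}[\xi,\eta]^{(s_{\tau})}$ and insert (\ref{brackcomm}) evaluated at the shifted basepoint $s_{\tau}$. The crucial step is
\begin{equation*}
\alpha\circ_{s_{\tau}}\beta=\bigl(\alpha\circ_{s}(\beta\circ_{s}(\tau\gamma))\bigr)/_{s}(\tau\gamma),
\end{equation*}
which I would derive from $R_{s_{\tau}}(\zeta)=\zeta\cdot((\tau\gamma)\cdot s)=(\zeta\circ_{s}(\tau\gamma))\cdot s=R_{s}(\zeta\circ_{s}(\tau\gamma))$, whence $R_{s_{\tau}}^{-1}(p)=(R_{s}^{-1}(p))/_{s}(\tau\gamma)$, combined with $R_{s}^{-1}(\alpha\cdot(\beta\cdot s_{\tau}))=\alpha\circ_{s}(\beta\circ_{s}(\tau\gamma))$. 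Substituting $\alpha=t_{1}\xi$, $\beta=t_{2}\eta$ expresses $\circ_{s_{\tau}}$ entirely through the single fixed product $\circ_{s}$ based at $s$.

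It then remains to extract the trilinear coefficient. I would prove the single-term identity $\partial_{\tau}\partial_{t_{1}}\partial_{t_{2}}\big|_{0}(t_{1}\xi)\circ_{s_{\tau}}(t_{2}\eta)=[\xi,\eta,\gamma]^{(s)}$: the right division by $\tau\gamma$ is precisely what converts the nested product $\alpha\circ_{s}(\beta\circ_{s}(\tau\gamma))$ into the associator, so the $t_{1}t_{2}\tau$-coefficient reproduces the trilinear part of $\alpha\circ_{s}(\beta\circ_{s}\delta)-(\alpha\circ_{s}\beta)\circ_{s}\delta$. Applying this to each of the two terms of the commutator in (\ref{brackcomm}) at $s_{\tau}$, differentiating in $\tau$, and relabeling $t_{1}\leftrightarrow t_{2}$ in the second term then gives $a^{(s)}(\xi,\eta,\gamma)=[\xi,\eta,\gamma]^{(s)}-[\eta,\xi,\gamma]^{(s)}$. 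The main obstacle is the bookkeeping in this last step: one must expand $\circ_{s}$ to third order, solve $z\circ_{s}(\tau\gamma)=w$ for the right quotient $z=w/_{s}(\tau\gamma)$ to the relevant order, and verify that the symmetric cubic terms of $\circ_{s}$ (those of degree two in the left factor) cancel against the corrections coming from the division, leaving exactly the associator. Once this expansion is controlled, both identities follow.
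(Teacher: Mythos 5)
Your proof is correct and follows essentially the same route as the paper: both parts reduce $\circ_{s}$ to compositions of flows of fundamental vector fields via $R_{s}^{-1}$, and your change-of-basepoint identity plus the linearization of the right quotient is exactly the paper's use of $(t_{1}\xi )\circ _{(t_{3}\gamma \cdot s)}(t_{2}\eta )=\left( R_{t_{3}\gamma \cdot s}\right) ^{-1}\left( \left( (t_{1}\xi )\circ _{s}((t_{2}\eta )\circ _{s}(t_{3}\gamma ))\right) \cdot s\right) $ combined with (\ref{rinvpt}). The only cosmetic difference is that for (\ref{brackcomm}) you invoke the mixed-partial, commutator-of-flows characterization of $[X,Y]$ where the paper differentiates the pulled-back field $(\Phi _{\xi ,t}^{-1})_{\ast }Y$, and the ``bookkeeping obstacle'' you flag at the end is not actually an obstacle: since the triple derivative is taken at the origin in each of $t_{1},t_{2},\tau $, only the term linear in each variable survives, so the first-order quotient formula (\ref{rinvpt}) already yields the associator with no cancellation of higher-order terms to check.
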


\begin{proof}
By definition (\ref{lbrack}), we have 
\begin{equation}
\left[ \xi ,\eta \right] ^{\left( s\right) }=-\rho _{s}^{-1}\left( \left. %
\left[ X,Y\right] \right\vert _{s}\right) .
\end{equation}%
where $X=\rho \left( \xi \right) $ and $Y=\rho \left( \eta \right) $ are
vector fields on $\mathbb{L}.$ However, the bracket of two vector fields $X$
and $Y$ is given by%
\begin{equation}
\left[ X,Y\right] _{s}=\left. \frac{d}{dt}\left( \left( \Phi _{\xi
,t}^{-1}\right) _{\ast }\left( Y_{\Phi _{\xi ,t}\left( s\right) }\right)
\right) \right\vert _{t=0},  \label{vecbracket}
\end{equation}%
where $\Phi _{\xi ,t}$ is the flow generated by $X,$ and hence, $\Phi _{\xi
,t}\left( s\right) =\left( t\xi \right) \cdot s.$ Therefore 
\begin{equation*}
Y_{\Phi _{\xi ,t}\left( s\right) }=\rho _{\left( t\xi \cdot s\right) }\left(
\eta \right) .
\end{equation*}%
Hence 
\begin{equation}
\left( \Phi _{\xi ,t}^{-1}\right) _{\ast }\left( Y_{\Phi _{\xi ,t}\left(
s\right) }\right) =\left( L_{t\xi }^{-1}\right) _{\ast }\rho _{\left( t\xi
\cdot s\right) }\left( \eta \right) ,  \label{Phinegt}
\end{equation}%
and from Lemma \ref{lemProps}, $L_{t\xi }^{-1}=L_{-t\xi }$. Then,
differentiating, we find that 
\begin{equation*}
\left. \frac{d}{dt}\left( \left( L_{-t\xi }\right) _{\ast }\rho _{\left(
t\xi \cdot s\right) }\left( \eta \right) \right) \right\vert _{t=0}=\left. 
\frac{d}{dt}\left( \rho _{\left( t\xi \cdot s\right) }\left( \eta \right)
\right) \right\vert _{t=0}-\left. \frac{d}{dt}\left( \left( L_{t\xi }\right)
_{\ast }\rho _{\left( s\right) }\left( \eta \right) \right) \right\vert
_{t=0}.
\end{equation*}%
Also from Lemma \ref{lemProps}, we know that $\rho _{\left( s\right)
}=\left. \left( R_{s}\right) _{\ast }\right\vert _{0}.$ Hence, since for
some parameter $t_{2}$, $t_{2}\eta $ is a path in $\mathfrak{l}$ through $0$
with tangent vector $\eta $ at $0$, we can rewrite 
\begin{eqnarray*}
\rho _{\left( t\xi \cdot s\right) }\left( \eta \right) &=&\left. \left(
R_{t\xi \cdot s}\right) _{\ast }\right\vert _{0}\left( \eta \right) \\
&=&\left. \frac{d}{dt_{2}}\left( \left( t_{2}\eta \right) \cdot \left( t\xi
\cdot s\right) \right) \right\vert _{t_{2}=0} \\
&=&\left. \frac{d}{dt_{2}}R_{s}\left( \left( t_{2}\eta \right) \circ
_{s}\left( t\xi \right) \right) \right\vert _{t_{2}=0}
\end{eqnarray*}%
and similarly, 
\begin{eqnarray*}
\left( L_{t\xi }\right) _{\ast }\rho _{\left( s\right) }\left( \eta \right)
&=&\left. \left( L_{t\xi }\right) _{\ast }\right\vert _{s}\left. \left(
R_{s}\right) _{\ast }\right\vert _{0}\left( \eta \right) \\
&=&\left. \frac{d}{dt_{2}}\left( t\xi \right) \cdot \left( \left( t_{2}\eta
\right) \cdot s\right) \right\vert _{t_{2}=0} \\
&=&\left. \frac{d}{dt_{2}}R_{s}\left( \left( t\xi \right) \circ _{s}\left(
t_{2}\eta \right) \right) \right\vert _{t_{2}=0}
\end{eqnarray*}%
Overall, 
\begin{equation*}
\left. \left[ X,Y\right] \right\vert _{s}=\left( R_{s}\right) _{\ast }\left. 
\frac{d^{2}}{dt_{1}dt_{2}}\left( \left( t_{2}\eta \right) \circ _{s}\left(
t_{1}\xi \right) -\left( t_{1}\xi \right) \circ _{s}\left( t_{2}\eta \right)
\right) \right\vert _{t_{1}=t_{2}=0},
\end{equation*}%
and therefore, we indeed get (\ref{brackcomm}).

Now consider 
\begin{equation*}
\left. \frac{d^{3}}{dt_{1}dt_{2}dt_{3}}\left( t_{1}\xi \right) \circ
_{\left( t_{3}\gamma \cdot s\right) }\left( t_{2}\eta \right) \right\vert
_{t_{1}=t_{2}=t_{3}=0}.
\end{equation*}%
Expanding, we get 
\begin{eqnarray*}
\left( t_{1}\xi \right) \circ _{\left( t_{3}\gamma \cdot s\right) }\left(
t_{2}\eta \right) &=&\left( R_{t_{3}\gamma \cdot s}\right) ^{-1}\left(
\left( t_{1}\xi \right) \cdot \left( \left( t_{2}\eta \right) \cdot \left(
t_{3}\gamma \cdot s\right) \right) \right) \\
&=&\left( R_{t_{3}\gamma \cdot s}\right) ^{-1}\left( \left( \left( t_{1}\xi
\right) \circ _{s}\left( \left( t_{2}\eta \right) \circ _{s}\left(
t_{3}\gamma \right) \right) \right) \cdot s\right) .
\end{eqnarray*}%
Using (\ref{rinvpt}) with $p\left( t_{3}\right) =\left( \left( t_{1}\xi
\right) \circ _{s}\left( \left( t_{2}\eta \right) \circ _{s}\left(
t_{3}\gamma \right) \right) \right) \cdot s$, we find 
\begin{eqnarray*}
\left. \frac{d}{dt_{3}}\left( R_{t_{3}\gamma \cdot s}\right) ^{-1}\left(
\left( \left( t_{1}\xi \right) \circ _{s}\left( \left( t_{2}\eta \right)
\circ _{s}\left( t_{3}\gamma \right) \right) \right) \cdot s\right)
\right\vert _{t_{3}=0} &=&\left. \frac{d}{dt_{3}}\left( t_{1}\xi \right)
\circ _{s}\left( \left( t_{2}\eta \right) \circ _{s}\left( t_{3}\gamma
\right) \right) \right\vert _{t_{3}=0} \\
&&-\left. \frac{d}{dt_{3}}\left( \left( t_{1}\xi \right) \circ _{s}\left(
t_{2}\eta \right) \right) \circ _{s}\left( t_{3}\gamma \right) \right\vert
_{t_{3}=0}.
\end{eqnarray*}%
Hence, 
\begin{equation*}
\left. \frac{d^{3}}{dt_{1}dt_{2}dt_{3}}\left( t_{1}\xi \right) \circ
_{\left( t_{3}\gamma \cdot s\right) }\left( t_{2}\eta \right) \right\vert
_{t_{1}=t_{2}=t_{3}=0}=\left[ \xi ,\eta ,\gamma \right] ^{\left( s\right) }.
\end{equation*}%
Consider now 
\begin{equation*}
\left. d_{\rho \left( \gamma \right) }b\left( \xi ,\eta \right) \right\vert
_{s}=a^{\left( s\right) }\left( \xi ,\eta ,\gamma \right) ,
\end{equation*}%
where we use the definition of $a^{\left( s\right) }$ (\ref{ap1}). However,
from (\ref{brackcomm}), 
\begin{eqnarray*}
\left. d_{\rho \left( \gamma \right) }b\left( \xi ,\eta \right) \right\vert
_{s} &=&\left. \frac{d}{dt_{3}}\left[ \xi ,\eta \right] ^{t_{3}\gamma \cdot
s}\right\vert _{t_{3}=0} \\
&=&\left. \frac{d^{3}}{dt_{1}dt_{2}dt_{3}}\left( \left( t_{1}\xi \right)
\circ _{\left( t_{3}\gamma \cdot s\right) }\left( t_{2}\eta \right) -\left(
t_{2}\eta \right) \circ _{\left( t_{3}\gamma \cdot s\right) }\left( t_{2}\xi
\right) \right) \right\vert _{t_{1}=t_{2}=t_{3}=0} \\
&=&\left[ \xi ,\eta ,\gamma \right] ^{\left( s\right) }-\left[ \eta ,\xi
,\gamma \right] ^{\left( s\right) }.
\end{eqnarray*}%
So indeed, 
\begin{equation*}
a^{\left( s\right) }\left( \xi ,\eta ,\gamma \right) =\left[ \xi ,\eta
,\gamma \right] ^{\left( s\right) }-\left[ \eta ,\xi ,\gamma \right]
^{\left( s\right) }.
\end{equation*}
\end{proof}

Consider now the connection $\nabla $ from (\ref{flatconn}). Its torsion $%
T\left( X,Y\right) $ for vector fields $X$ and $Y$ is given by 
\begin{equation*}
T\left( X,Y\right) =\nabla _{X}Y-\nabla _{Y}X-\left[ X,Y\right] ,
\end{equation*}%
so in particular for $X=\rho \left( \xi \right) $ and $Y=\rho \left( \eta
\right) ,$ for $\xi ,\eta \in \mathfrak{l}$, we have 
\begin{equation*}
T\left( \rho \left( \xi \right) ,\rho \left( \eta \right) \right) =-\left[
\rho \left( \xi \right) ,\rho \left( \eta \right) \right] .
\end{equation*}%
Hence, using (\ref{lbrack}), we find that at a point $s\in \mathbb{L}$, we
have 
\begin{equation}
T\left( \rho \left( \xi \right) ,\rho \left( \eta \right) \right) _{s}=\rho
_{s}\left( \left[ \xi ,\eta \right] ^{\left( \rho ,s\right) }\right) .
\label{Texp}
\end{equation}%
Now let $Z=\rho \left( \gamma \right) $ for $\gamma \in \mathfrak{l,}$ hence 
\begin{equation}
g_{s}\left( T\left( \rho \left( \xi \right) ,\rho \left( \eta \right)
\right) _{s},\rho _{s}\left( \gamma \right) \right) =\left\langle \left[ \xi
,\eta \right] ^{\left( \rho ,s\right) },\gamma \right\rangle .  \label{gT}
\end{equation}%
Recall that a torsion of a metric connection is said to be totally
skew-symmetric if for any vector fields $X,Y,Z,$ 
\begin{equation}
g\left( T\left( X,Y\right) ,Z\right) =-g\left( Y,T\left( X,Z\right) \right) .
\label{Tskew}
\end{equation}%
From (\ref{gT}), we see that the torsion of the flat connection $\nabla $ is
totally skew-symmetric if and only if for any $s\in \mathbb{L}$, 
\begin{equation}
\left\langle \left[ \xi ,\eta \right] ^{\left( \rho ,s\right) },\gamma
\right\rangle =-\left\langle \eta ,\left[ \xi ,\gamma \right] ^{\left( \rho
,s\right) }\right\rangle .  \label{admet}
\end{equation}%
Equivalently, for any $\xi \in \mathfrak{l}$ and any $s\in \mathbb{L},$ the
map $\func{ad}_{\xi }^{\left( \rho ,s\right) }$ is skew-adjoint with respect
to the inner product on $\mathfrak{l}.$

Moreover, consider $\nabla T.$ Then, 
\begin{eqnarray*}
\left( \nabla _{\rho \left( \gamma \right) }T\right) \left( \rho \left( \xi
\right) ,\rho \left( \eta \right) \right) &=&\nabla _{\rho \left( \gamma
\right) }\left( T\left( \rho \left( \xi \right) ,\rho \left( \eta \right)
\right) \right) \\
&=&\nabla _{\rho \left( \gamma \right) }\left( \rho \left( b\left( \xi ,\eta
\right) \right) \right) \\
&=&\rho \left( d_{\rho \left( \gamma \right) }\left( b\left( \xi ,\eta
\right) \right) \right) \\
&=&\rho \left( a\left( \xi ,\eta ,\gamma \right) \right) .
\end{eqnarray*}%
A classical theorem by Cartan and Schouten \cite{cartan1926riem} (and \cite%
{AgricolaFriedrichFlat} for a more modern treatment) states the following.

\begin{theorem}[\protect\cite{cartan1926riem}]
\label{thmCS23}Let $\left( M,g\right) $ be a simply-connected, complete
irreducible Riemannian manifold with a flat metric connection $\nabla $ with
a non-zero totally skew-symmetric torsion $T.$ Then

\begin{enumerate}
\item If $\nabla T=0,$ then $M$ is isometric to a compact simple Lie groups.

\item If $\nabla T\neq 0$, then $M$ is isometric to $S^{7}.$
\end{enumerate}
\end{theorem}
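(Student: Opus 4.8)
The plan is to recast the hypotheses in the parallelized-manifold language developed above and then split the argument according to whether the torsion is parallel. Since $\nabla$ is a flat metric connection on a simply-connected, complete manifold, its holonomy is trivial, so parallel transport is path-independent and produces a global $\nabla$-parallel frame. Fixing the model fiber $\mathfrak{l}=T_{s_{0}}M$ and letting $\rho_{p}$ be parallel transport from $s_{0}$ to $p$ realizes $(M,\mathfrak{l},\rho)$ as a parallelized manifold whose pullback metric is $g$ and whose flat connection is the given $\nabla$. Under this dictionary the fundamental vector fields are exactly the $\nabla$-parallel fields, by \eqref{Texp} the torsion is $T(\rho(\xi),\rho(\eta))=\rho(b(\xi,\eta))$, and total skew-symmetry of $T$ is precisely the condition \eqref{admet} that each $\operatorname{ad}_{\xi}^{(\rho,s)}$ be skew-adjoint. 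The key observation is the computation preceding the theorem, $(\nabla_{\rho(\gamma)}T)(\rho(\xi),\rho(\eta))=\rho(a(\xi,\eta,\gamma))$, which shows that $\nabla T=0$ if and only if the skew-associator $a$ vanishes, equivalently $db=0$ and $b$ is constant.

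First I would dispose of the case $\nabla T=0$. Then $b$ is a constant element of $\Lambda^{2}\mathfrak{l}^{\ast}\otimes\mathfrak{l}$, and the pointwise form \eqref{jacid2} of the generalized Jacobi identity, whose right-hand side is built from the now-vanishing $a$, shows that $(\mathfrak{l},b)$ is a genuine Lie algebra. Hence the parallel frame has constant structure constants $[X_{i},X_{j}]=c_{\ ij}^{k}X_{k}$; by the standing completeness assumption these fields are complete and, closing under bracket, they integrate to a simply-transitive action of the simply-connected Lie group $G$ with Lie algebra $\mathfrak{l}$, identifying $M\cong G$ with $\rho$ the right-invariant trivialization. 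Total skew-symmetry of $T$, i.e.\ \eqref{admet}, is exactly $\operatorname{ad}$-invariance of $\langle\cdot,\cdot\rangle$, so $g$ is bi-invariant; irreducibility of $(M,g)$ forces $\mathfrak{l}$ to be simple, while the existence of a positive-definite $\operatorname{ad}$-invariant inner product forces the Killing form to be negative definite, so $G$ is a compact simple Lie group. This yields conclusion (1).

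The remaining case $\nabla T\neq0$ is the substantive one and needs input beyond the Lie-theoretic dictionary: now $a\not\equiv0$, $b$ is non-constant, and $\mathfrak{l}$ is not a Lie algebra, mirroring the failure of associativity of $\circ_{s}$. The strategy is to extract pointwise algebraic constraints from flatness. Writing $\nabla=\nabla^{g}+\tfrac12 T$ with $T$ regarded as a $3$-form and imposing $R^{\nabla}\equiv0$, the first Bianchi identity for connections with skew torsion gives $\sum_{\mathrm{cyc}}(\nabla^{g}_{X}T)(Y,Z)=-\sum_{\mathrm{cyc}}T(T(X,Y),Z)$, and differentiating once more, together with $\nabla R^{\nabla}\equiv0$, controls $\nabla^{g}\nabla^{g}T$ in terms of quadratic expressions in $T$. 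One then reads these as equations on the single $3$-form $T$ at each point: the quadratic term $T\circ T$ and the simultaneous demands $R^{\nabla}=0$ and $\nabla^{g}T\neq0$ pin down the stabilizer of $T$ inside $SO(n)$ so rigidly that, under irreducibility, the only solution is $n=7$ with $T$ proportional to the associative (octonionic, $G_{2}$-invariant) $3$-form. For that $T$ the metric $g$ is forced to be the round metric of constant curvature, whence $M\cong S^{7}$, giving conclusion (2).

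I expect the genuinely hard step to be this last rigidity: showing that \emph{non-parallel} skew torsion compatible with a flat metric connection on an irreducible manifold algebraically forces the $G_{2}$/octonionic $3$-form in dimension $7$. This is a representation-theoretic rigidity statement about $3$-forms under $SO(n)$, and it is exactly where the classical Cartan--Schouten analysis (and its modern reformulation via the holonomy of $\nabla$ and naturally reductive structures) does its real work; by contrast the parallel case reduces cleanly to the Lie-group correspondence supplied by the preceding sections.
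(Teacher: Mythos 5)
The paper does not prove this statement: it is quoted as the classical Cartan--Schouten theorem, with \cite{cartan1926riem} and \cite{AgricolaFriedrichFlat} cited for the proof, and is then used as a black box in the corollary that follows it. So the question is whether your argument stands on its own, and it does not. Your treatment of case (1) is a reasonable sketch: translating the hypotheses into the parallelized-manifold dictionary, observing via the computation preceding the theorem that $\nabla T=0$ forces the skew-associator $a$ to vanish, hence $db=0$, hence via (\ref{jacid2}) that $\left( \mathfrak{l},b\right) $ is a genuine Lie algebra, and then integrating the parallel frame (Lie's third theorem together with completeness and simple connectivity) to identify $M$ with a simply connected Lie group carrying a bi-invariant metric, which irreducibility and positive definiteness of the $\func{ad}$-invariant inner product force to be compact simple. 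Those steps are standard and can all be filled in.

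Case (2), however, is not proved; it is only described. You write down the Bianchi identity for connections with skew torsion, assert that flatness together with $\nabla T\neq 0$ ``pins down the stabilizer of $T$ inside $SO(n)$'' so that the only solution is $n=7$ with $T$ the associative $3$-form, and then explicitly flag this rigidity as the step you have not carried out. That step is the entire content of the non-parallel case: one must show that the quadratic constraints on $T$ coming from $R^{\nabla }=0$, combined with $\nabla T\neq 0$ and irreducibility, admit a solution only in dimension $7$, that the solution is the $G_{2}$-invariant form coming from octonion multiplication, and that the induced metric is then the round one. Nothing in the paper's framework (the bracket function $b$, the skew-associator $a$, or the local products $\circ _{s}$) supplies this; it is exactly the classical analysis of Cartan and Schouten, or the modern holonomy argument of Agricola and Friedrich, and cannot be waved at. As written, your proposal establishes conclusion (1) modulo standard facts but leaves conclusion (2) as an acknowledged placeholder, so it is not a proof of the theorem.
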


In our formulation, we have the following corollary.

\begin{corollary}
Suppose $\left( \mathbb{L},\mathfrak{l},\rho \right) $ is a parallelized
manifold. If for $s\in \mathbb{L}$, there exists a smooth family of inner
products $\left\langle \cdot ,\cdot \right\rangle _{s}$ on $\mathfrak{l}$
such that the bracket $\left[ \cdot ,\cdot \right] ^{\left( \rho ,s\right) }$
is skew-adjoint with respect to it, then there exists a parallelization $%
\tilde{\rho}$ of $\mathbb{L}$ and an inner product $\left\langle \cdot
,\cdot \right\rangle $ on $\mathfrak{l},$ such that the Riemannian manifold $%
\left( \mathbb{L},g\right) $ with $g=\left\langle \tilde{\rho}^{-1},\tilde{%
\rho}^{-1}\right\rangle $ is isometric to a product of compact simple Lie
groups and copies of $S^{7}.$
\end{corollary}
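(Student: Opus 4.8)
The plan is to reduce the statement to the Cartan--Schouten classification, Theorem \ref{thmCS23}, by first packaging the hypothesis geometrically. The smooth family $\langle\cdot,\cdot\rangle_s$ defines a Riemannian metric $g$ on $\mathbb{L}$ by $g_s(\rho_s\xi,\rho_s\eta)=\langle\xi,\eta\rangle_s$, exactly as in (\ref{gsdef}) but with the inner product now allowed to depend on $s$. Repeating the computation that led to (\ref{admet}), the assumed skew-adjointness of $[\cdot,\cdot]^{(\rho,s)}$ with respect to $\langle\cdot,\cdot\rangle_s$ is equivalent to the torsion of the flat connection $\nabla$ of (\ref{flatconn}) being totally skew-symmetric with respect to $g$. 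Thus $(\mathbb{L},g)$ already carries a flat connection with totally skew-symmetric torsion; the only hypothesis of Theorem \ref{thmCS23} still missing is that this connection be \emph{metric}, which fails precisely because $g_s(\rho_s\xi,\rho_s\eta)$ is not constant in $s$.

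The heart of the argument is therefore to trade the non-metric flat connection for a metric one with the same skew torsion. I would seek the required trivialization $\tilde\rho$ among the $g$-orthonormal ones: fix an inner product $\langle\cdot,\cdot\rangle$ on $\mathfrak{l}$ and write $\tilde\rho_s=\rho_s A_s$, where $A_s\in GL(\mathfrak{l})$ is chosen smoothly so that $\langle A_s\xi,A_s\eta\rangle_s=\langle\xi,\eta\rangle$; then $g=\langle\tilde\rho^{-1},\tilde\rho^{-1}\rangle$ and the associated flat connection $\tilde\nabla=\tilde\rho\circ d\circ\tilde\rho^{-1}$ is automatically flat and, by the first part of the Lemma following (\ref{flatconn}), metric. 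It then remains to show that $\tilde\nabla$ has totally skew-symmetric torsion, i.e. that the structure functions $\tilde c_{ij}^{\,k}$ of the orthonormal frame $\tilde X_i=\tilde\rho(e_i)$ are totally antisymmetric. Expanding $[\tilde X_i,\tilde X_j]$ splits $\tilde c$ into a tensorial term, obtained by transporting the old structure functions through $A_s$, and a term built from $\omega_k:=(X_kA_s)A_s^{-1}$. Using $A_sh_sA_s^{\mathsf{T}}=I$ (with $h_{s,kl}=\langle e_k,e_l\rangle_s$) one checks that the tensorial term is the contraction of the totally antisymmetric $c_{kln}=c_{kl}^{\,m}h_{mn}$ against $A_s$ in all three slots, hence is itself totally antisymmetric; the obstruction to skewness lives entirely in the derivative term.

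The main obstacle is exactly this derivative term. Differentiating $A_sh_sA_s^{\mathsf{T}}=I$ shows that the symmetric part of $\omega_k$ is pinned down by $dh_s$, while its antisymmetric part is free: under a fibrewise rotation $A_s\mapsto R(s)A_s$ with $R(s)\in O(\mathfrak{l})$ it changes by $(X_kR)R^{-1}$. I would exploit this $O(\mathfrak{l})$ gauge freedom to select a frame in which the derivative contribution to $\tilde c$ is also totally antisymmetric; since \emph{every} global frame defines a flat metric connection, no integrability condition is incurred by the choice of $R$, and the problem reduces to solving the resulting linear skew-symmetry condition for the antisymmetric part of $\omega_k$. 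Establishing that such a gauge always exists is the crux, and the step I expect to be genuinely delicate. Once $\tilde\nabla$ is known to be a flat metric connection with totally skew-symmetric torsion, the remainder is standard: I would secure completeness of $g$ (immediate when $\mathbb{L}$ is compact, and otherwise via the complete-trivialization convention adopted earlier), pass to the universal cover to arrange simple-connectedness, apply the de Rham decomposition to split $(\mathbb{L},g)$ into irreducible factors, and invoke Theorem \ref{thmCS23} on each factor carrying non-zero torsion to identify it as a compact simple Lie group (when $\nabla T=0$) or as $S^7$ (when $\nabla T\neq 0$), treating any flat factor separately.
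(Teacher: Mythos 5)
Your reduction to Theorem \ref{thmCS23} follows the same route as the paper: build the metric $g$ from the varying family $\left\langle \cdot,\cdot\right\rangle _{s}$, realize it as a pullback metric by replacing $\rho _{s}$ with $\tilde{\rho}_{s}=\rho _{s}A_{s}$ (the paper's $Q_{s}$), and feed the resulting flat metric connection into the Cartan--Schouten classification. You have also correctly located the one genuinely nontrivial point, which the paper's own proof passes over: because $A_{s}$ varies with $s$, the bracket of the new fundamental vector fields is \emph{not} simply $Q_{s}^{-1}\left[ Q_{s}\xi ,Q_{s}\eta \right] ^{\left( \rho ,s\right) }$; it acquires the derivative terms you call $\omega _{k}=(X_{k}A_{s})A_{s}^{-1}$, and total skew-symmetry of the new torsion is exactly the statement that these terms combine with the (already totally skew) tensorial part into a totally antisymmetric $3$-tensor.

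However, your proposal does not close this gap; it only names it. The claim that the $O(\mathfrak{l})$ gauge freedom suffices ``since every global frame defines a flat metric connection, no integrability condition is incurred'' is not a valid argument: while any choice of $R$ does yield a flat metric connection, the $n$ antisymmetric matrices $(X_{k}R)R^{-1}$ produced by a single map $R:\mathbb{L}\longrightarrow O(\mathfrak{l})$ are not independently prescribable --- they are the frame components of the pullback of the Maurer--Cartan form of $O(\mathfrak{l})$ and therefore satisfy a curvature-vanishing compatibility constraint. The pointwise linear algebra determines what the antisymmetric parts of the $\omega _{k}$ would have to be in order to make the torsion totally skew, but whether those prescribed values are realized by some globally defined $R$ is precisely an integrability problem, and you give no argument that it is solvable. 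Until that step is supplied, the proof is incomplete at its self-declared crux. (For comparison, the paper's proof sidesteps the issue only by asserting $\left[ \xi ,\eta \right] ^{\left( \tilde{\rho},s\right) }=Q_{s}^{-1}\left[ Q_{s}\xi ,Q_{s}\eta \right] ^{\left( \rho ,s\right) }$, i.e., by differentiating the new fundamental fields as if $Q$ were constant; your own analysis shows that this identity is exactly what needs justification, so you should not expect the missing step to be dispensable.)
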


\begin{proof}
Suppose for each $s\in \mathbb{L},$ $\left\langle \cdot ,\cdot \right\rangle
_{s}$ is such that for any $\xi ,\eta ,\gamma \in \mathfrak{l},$ 
\begin{equation*}
\left\langle \left[ \xi ,\eta \right] ^{\left( \rho ,s\right) },\gamma
\right\rangle _{s}=-\left\langle \eta ,\left[ \xi ,\gamma \right] ^{\left(
\rho ,s\right) }\right\rangle _{s}.
\end{equation*}%
Then, choosing any inner product $\left\langle \cdot ,\cdot \right\rangle $
on $\mathfrak{l},$ we can write $\left\langle \xi ,\eta \right\rangle
_{s}=\left\langle Q_{s}^{-1}\xi ,Q_{s}^{-1}\eta \right\rangle ,$ for some
map $Q_{s}\in GL\left( \mathfrak{l}\right) .$ Since the family of inner
products is smooth, we obtain a smooth family of maps $Q_{s}.$ Now define a
new parallelization of $\mathbb{L}$ given by $\tilde{\rho}_{s}=\rho
_{s}\circ Q_{s}.$ Then, 
\begin{eqnarray*}
\left. g\left( X,Y\right) \right\vert _{s} &=&\left\langle \tilde{\rho}%
_{s}^{-1}X,\tilde{\rho}_{s}^{-1}Y\right\rangle \\
&=&\left\langle Q_{s}^{-1}\rho _{s}^{-1}X,Q_{s}^{-1}\rho
_{s}^{-1}Y\right\rangle \\
&=&\left\langle \rho _{s}^{-1}X,\rho _{s}^{-1}Y\right\rangle _{s}.
\end{eqnarray*}%
In particular, note that by definition (\ref{lbrack}) 
\begin{eqnarray*}
\left[ \xi ,\eta \right] ^{\left( \tilde{\rho},s\right) } &=&-\tilde{\rho}%
_{s}^{-1}\left( \left. \left[ \tilde{\rho}\left( \xi \right) ,\tilde{\rho}%
\left( \eta \right) \right] \right\vert _{s}\right) \\
&=&-Q_{s}^{-1}\rho _{s}^{-1}\left( \left. \left[ \rho \left( Q\left( \xi
\right) \right) ,\rho \left( Q\left( \eta \right) \right) \right]
\right\vert _{s}\right) \\
&=&Q_{s}^{-1}\left[ Q_{s}\left( \xi \right) ,Q_{s}\left( \eta \right) \right]
^{\left( \rho ,s\right) }
\end{eqnarray*}%
and hence 
\begin{eqnarray*}
\left\langle \left[ \xi ,\eta \right] ^{\left( \tilde{\rho},s\right)
},\gamma \right\rangle &=&\left\langle Q_{s}^{-1}\left[ Q_{s}\left( \xi
\right) ,Q_{s}\left( \eta \right) \right] ^{\left( \rho ,s\right) },\gamma
\right\rangle \\
&=&\left\langle \left[ Q_{s}\left( \xi \right) ,Q_{s}\left( \eta \right) %
\right] ^{\left( \rho ,s\right) },Q_{s}\left( \gamma \right) \right\rangle \\
&=&-\left\langle Q_{s}\left( \eta \right) ,\left[ Q_{s}\left( \xi \right)
,Q_{s}\left( \gamma \right) \right] ^{\left( \rho ,s\right) }\right\rangle \\
&=&-\left\langle \eta ,\left[ \xi ,\gamma \right] ^{\left( \tilde{\rho}%
,s\right) }\right\rangle .
\end{eqnarray*}%
In particular, we now see that with $g$ and $\nabla $ defined using $\tilde{%
\rho},$ the torsion of $\nabla $ satisfies (\ref{Tskew}), and hence by
Theorem \ref{thmCS23}, $\left( \mathbb{L},g\right) $ is isometric to a
product of compact simple Lie groups and copies of $S^{7}.$
\end{proof}

\section{Automorphisms}

\setcounter{equation}{0} \label{sectAuto}

\begin{definition}
The category of (complete, connected) parallelized manifolds is defined in
the following way. An object in this category is the triple $\left( \mathbb{L%
},\mathfrak{l},\rho \right) ,$ where $\mathbb{L}$ is an $n$-dimensional
parallelizable connected manifold, $\mathfrak{l}$ is an $n$-dimensional
vector space, and $\rho $ is the trivialization map, such that the
fundamental vector fields are complete.

Suppose $\left( \mathbb{L}_{1},\mathfrak{l}_{1},\rho _{1}\right) $ and $%
\left( \mathbb{L}_{2},\mathfrak{l}_{2},\rho _{2}\right) $ are two
parallelized manifolds, then a morphism between them is a smooth map $h:%
\mathbb{L}_{1}\longrightarrow \mathbb{L}_{2}\ $for which there exists a
linear map $h^{\prime }:\mathfrak{l}_{1}\longrightarrow \mathfrak{l}_{2}$
such that the following diagram commutes

\begin{equation}
\begin{tikzcd} T_{p}\mathbb{L}_{1} \arrow[r, "\left. h_{\ast }\right\vert
_{p}"] & T_{h(p)} \mathbb{L}_{2} \\ \mathfrak{l}_{1} \arrow[u, " \left( \rho
_{1}\right) _{p}"'] \arrow[r, "h'"'] & \mathfrak{l}_{2} \arrow[u, " \left(
\rho _{2}\right) _{h\left( p\right)}"'] \end{tikzcd}  \label{figmorphism}
\end{equation}%
We will refer to $\left( h,h^{\prime }\right) $ as a compatible pair.
\end{definition}

\begin{remark}
the notation $h^{\prime }$ is appropriate since this map is closely related
to the derivative of $h.$ Any smooth $h$ will induce an invertible linear
map $h_{p}^{\prime }=\rho _{f\left( p\right) }^{-1}\circ \left. h_{\ast
}\right\vert _{p}\circ \rho _{p}:\mathfrak{l}_{1}\longrightarrow \mathfrak{l}%
_{2},$ however in general it will depend on $p.$ The key point of the above
definition is that for a morphism of parallelized manifolds, $h_{p}^{\prime }
$ is independent of $p$. A morphism between parallelized manifolds is a
special case of a vector bundle morphism. Indeed, consider the tangent
bundles $T\mathbb{L}_{1}$ and $T\mathbb{L}_{2}.$ Then, the bundle map $%
\left( h,\rho _{2}\circ h^{\prime }\circ \rho _{1}^{-1}\right) :T\mathbb{L}%
_{1}\longrightarrow T\mathbb{L}_{2}$ covers $h:\mathbb{L}_{1}\longrightarrow 
\mathbb{L}_{2}$ and induces a linear map of each fiber.
\end{remark}

\begin{example}
Suppose we have a parallelized manifold $\left( \mathbb{L},\mathfrak{l},\rho
_{1}\right) $, then any linear isomorphism of the vector space $\mathfrak{l}$
induces an isomorphism of parallelized manifolds. Indeed, suppose $h^{\prime
}:\mathfrak{l}\longrightarrow \mathfrak{l}$ is an invertible linear map.
Then, the compatible pair $\left( \func{id},h^{\prime }\right) $, where $%
\func{id}$ is the identity map on $\mathbb{L}$, give an isomorphism from $%
\left( \mathbb{L},\mathfrak{l},\rho _{1}\right) $ to $\left( \mathbb{L},%
\mathfrak{l},\rho _{2}\right) ,$ where $\rho _{2}=\rho _{1}\circ \left(
h^{\prime }\right) ^{-1}.$ This just corresponds to a $GL\left( n\right) $
transformation of the global frame on $T\mathbb{L}.$
\end{example}

\begin{lemma}
\label{lemPseudoProd}Suppose $\left( \mathbb{L}_{1},\mathfrak{l}_{1},\rho
_{1}\right) $ and $\left( \mathbb{L}_{2},\mathfrak{l}_{2},\rho _{2}\right) $
are two parallelized manifolds. A pair $\left( h,h^{\prime }\right) $ of a
smooth map $h:\mathbb{L}_{1}\longrightarrow \mathbb{L}_{2}$ and a linear map 
$h^{\prime }:\mathfrak{l}_{1}\longrightarrow \mathfrak{l}_{2}$ satisfy the
definition of a morphism of parallelized manifolds if and only if for any $%
s\in \mathbb{L}_{1}$ and $\xi \in \mathfrak{l}_{1}$, 
\begin{equation}
h\left( \xi \cdot s\right) =h^{\prime }\left( \xi \right) \cdot h\left(
s\right) .  \label{fhpseudo}
\end{equation}
\end{lemma}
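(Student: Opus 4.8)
The plan is to recognize that the commuting square \eqref{figmorphism}, read on a vector $\xi \in \mathfrak{l}_1$, is precisely the statement
\[
h_{\ast}|_{p}\big((\rho_1)_p(\xi)\big) = (\rho_2)_{h(p)}\big(h'(\xi)\big) \qquad \text{for all } p \in \mathbb{L}_1.
\]
Since $(\rho_1)_p(\xi) = \rho_1(\xi)|_p$ and $(\rho_2)_{h(p)}(h'(\xi)) = \rho_2(h'(\xi))|_{h(p)}$, this says exactly that for each fixed $\xi$ the fundamental vector field $\rho_1(\xi)$ on $\mathbb{L}_1$ is $h$-related to the fundamental vector field $\rho_2(h'(\xi))$ on $\mathbb{L}_2$. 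The lemma therefore reduces to the equivalence between this $h$-relatedness of fundamental vector fields and the product identity \eqref{fhpseudo}.

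For the forward direction, suppose $(h,h')$ is a morphism, so that $\rho_1(\xi)$ is $h$-related to $\rho_2(h'(\xi))$ for every $\xi$. Recall from Lemma \ref{lemProps}(1) together with \eqref{mu} that $t \mapsto (t\xi)\cdot s$ is exactly the integral curve of $\rho_1(\xi)$ through $s$, and likewise $t \mapsto (t\,h'(\xi))\cdot h(s)$ is the integral curve of $\rho_2(h'(\xi))$ through $h(s)$. I would then invoke the standard fact that a smooth map carries each integral curve of a vector field to an integral curve of any $h$-related vector field: applying this to the curve above and using uniqueness of integral curves through the common basepoint $h(s)$ gives $h\big((t\xi)\cdot s\big) = (t\,h'(\xi))\cdot h(s)$ for all $t$. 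Completeness of the trivializations guarantees both curves are defined for all $t$; setting $t=1$ and using linearity of $h'$ yields \eqref{fhpseudo}.

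For the converse, assume \eqref{fhpseudo} for all $s$ and $\xi$. Replacing $\xi$ by $t\xi$ and using $h'(t\xi) = t\,h'(\xi)$, we obtain $h\big((t\xi)\cdot s\big) = (t\,h'(\xi))\cdot h(s)$ for all $t$. I would differentiate both sides at $t=0$: by Lemma \ref{lemProps}(4), $\left.\frac{d}{dt}\right|_{t=0}\big((t\xi)\cdot s\big) = (\rho_1)_s(\xi)$, so the chain rule turns the left side into $h_{\ast}|_{s}\big((\rho_1)_s(\xi)\big)$, while the identical computation on $\mathbb{L}_2$ turns the right side into $(\rho_2)_{h(s)}\big(h'(\xi)\big)$. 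Equating these for all $\xi$ and $s$ is exactly the commutativity of \eqref{figmorphism}, so $(h,h')$ is a morphism.

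The genuinely load-bearing ingredient is the forward direction's appeal to the naturality of flows (uniqueness of integral curves) for $h$-related vector fields, which is where completeness is used; the converse is purely infinitesimal, and its one subtle point is that linearity of $h'$ is precisely what allows the rescaling $\xi \mapsto t\xi$ to convert the time-one product identity back into the pointwise differential statement encoded by the diagram.
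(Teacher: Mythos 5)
Your proposal is correct and follows essentially the same route as the paper: the forward direction is the paper's ODE argument (apply $h_{\ast}$ to the flow equation and use uniqueness of solutions), merely packaged as the standard naturality of flows for $h$-related vector fields, and your converse — rescale $\xi$ to $t\xi$, differentiate at $t=0$, and invoke $\left. dR_{s}\right\vert _{0}=\rho _{s}$ — is identical to the paper's. No gaps.
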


\begin{proof}
Let $\xi \in \mathfrak{l}_{1},$ and let $s\in \mathbb{L}.$ Recall that $\xi
\cdot s=\gamma _{\rho _{1}\left( \xi \right) ,s}\left( 1\right) $ is the
solution $x\left( 1\right) $ at $t=1$ of the initial value problem 
\begin{equation*}
\left\{ 
\begin{array}{c}
\frac{dx\left( t\right) }{dt}=\rho _{1}\left( \xi \right) _{x\left( t\right)
} \\ 
x\left( 0\right) =s%
\end{array}%
\right. .
\end{equation*}%
Then, applying $h_{\ast }$ to the equation, we get 
\begin{eqnarray}
\left. h_{\ast }\right\vert _{x\left( t\right) }\left( \frac{dx\left(
t\right) }{dt}\right) &=&\frac{d\left( h\left( x\left( t\right) \right)
\right) }{dt}  \label{fstardxtild} \\
&=&\left. h_{\ast }\right\vert _{x\left( t\right) }\left( \rho _{1}\left(
\xi \right) _{x\left( t\right) }\right)  \notag
\end{eqnarray}%
\qquad

Suppose now the pair $\left( h,h^{\prime }\right) $ satisfies the definition
of a morphism of parallelized manifolds. Using the property of $h^{\prime }$
from (\ref{figmorphism}), we have 
\begin{equation*}
\left. h_{\ast }\right\vert _{x\left( t\right) }\left( \rho _{1}\left( \xi
\right) _{x\left( t\right) }\right) =\rho _{2}\left( h^{\prime }\left( \xi
\right) \right) _{h\left( x\left( t\right) \right) }.
\end{equation*}%
Therefore, we see that $y=h\left( x\left( t\right) \right) $ satisfies the
equation%
\begin{equation}
\left\{ 
\begin{array}{c}
\frac{dy\left( t\right) }{dt}=\rho _{2}\left( h^{\prime }\left( \xi \right)
\right) _{y\left( t\right) } \\ 
y\left( 0\right) =h\left( s\right) 
\end{array}%
\right.   \label{eqhx}
\end{equation}%
and overall we see that $y\left( t\right) =\gamma _{\rho _{2}\left( \xi
\right) ,h\left( s\right) }\left( t\right) $ For $t=1,$ we hence have%
\begin{equation*}
h\left( \xi \cdot s\right) =h^{\prime }\left( \xi \right) \cdot h\left(
s\right) .
\end{equation*}

Conversely, suppose (\ref{fhpseudo}) holds for any $s\in \mathbb{L}_{1}$ and 
$\xi \in \mathfrak{l}_{1}.$ Then, we have 
\begin{equation}
h\left( t\xi \cdot s\right) =\left( th^{\prime }\left( \xi \right) \right)
\cdot h\left( s\right) .
\end{equation}%
Differentiating both sides, we find 
\begin{equation*}
\left. h_{\ast }\right\vert _{x\left( t\right) }\left( \rho _{1}\left( \xi
\right) _{x\left( t\right) }\right) =\rho _{2}\left( h^{\prime }\left( \xi
\right) \right) _{y\left( t\right) },
\end{equation*}%
where $x\left( t\right) =t\xi \cdot s$ and $y\left( t\right) =\left(
th^{\prime }\left( \xi \right) \right) \cdot h\left( s\right) =h\left(
x\left( t\right) \right) $. Setting $t=0$, we find that the pair $\left(
h,h^{\prime }\right) $ indeed satisfies the definition of a morphism of
parallelizable manifolds.
\end{proof}

\begin{example}
In the case of Lie groups, the definition of a morphism of parallelized
manifolds is equivalent to taking the pair of a Lie group homomorphism and
its differential at identity.
\end{example}

\begin{definition}
\label{defAutPar}Suppose $\left( \mathbb{L},\mathfrak{l},\rho \right) $ is a
parallelized manifold. Then, an automorphism of $\left( \mathbb{L},\mathfrak{%
l},\rho \right) $ is a diffeomorphism $h:\mathbb{L}\longrightarrow \mathbb{L}
$ for which there exists an invertible linear map $h^{\prime }:\mathfrak{l}%
\longrightarrow \mathfrak{l}$, such that the following diagram commutes%
\begin{equation}
\begin{tikzcd} T_{p}\mathbb{L} \arrow[r, "\left. h_{\ast }\right\vert _{p}"]
& T_{h(p)} \mathbb{L} \\ \mathfrak{l} \arrow[u, " \left( \rho \right)
_{p}"'] \arrow[r, "h'"'] & \mathfrak{l} \arrow[u, " \left( \rho \right)
_{h\left( p\right)}"'] \end{tikzcd}  \label{figmorphism2}
\end{equation}
\end{definition}

\begin{remark}
Given a trivialization $\rho ,$ we may define the trivial connection $\nabla 
$ on $T\mathbb{L}$ via%
\begin{equation}
\nabla =\rho \circ d\circ \rho ^{-1},  \label{nablatriv}
\end{equation}%
where $\rho $ is interpreted now as a map $\rho :C^{\infty }\left( \mathbb{L}%
,\mathfrak{l}\right) \longrightarrow \Gamma \left( T\mathbb{L}\right) .$
Given a diffeomorphism $h$ of $\mathbb{L},$ the pullback connection $h^{\ast
}\nabla $ is defined via 
\begin{eqnarray}
h^{\ast }\nabla &=&h_{\ast }^{-1}\circ \nabla \circ h_{\ast }
\label{pullbackconn} \\
&=&h_{\ast }^{-1}\circ \rho \circ d\circ \rho ^{-1}\circ h_{\ast }.  \notag
\end{eqnarray}%
Setting $h^{\prime }=\rho ^{-1}\circ h_{\ast }\circ \rho ,$ we get 
\begin{equation*}
h^{\ast }\nabla =\rho ^{-1}\circ \left( h^{\prime }\right) ^{-1}\circ d\circ
h\circ \rho .
\end{equation*}%
Hence we see that $h^{\ast }\nabla =\nabla $ if and only if $h^{\prime }$ is
constant and equivalently $h$ is an automorphism of $\left( \mathbb{L},%
\mathfrak{l},\rho \right) .$ In other words, $h$ is a connection-preserving
diffeomorphism, or as it is also known as, an \emph{affine diffeomorphism}
for the trivial connection $\nabla .$
\end{remark}

The set of automorphisms of $\left( \mathbb{L},\mathfrak{l},\rho \right) $
is clearly non-empty, since the identity map is always an automorphism, and
it's clearly a Lie group, with the Lie group structure induced from $\func{%
Diff}\left( \mathbb{L}\right) $. Let us denote it by $\Psi \left( \mathbb{L}%
\mathfrak{,l,\rho }\right) ,$ or just $\Psi $ if there is no ambiguity. Each 
$h\in \Psi \left( \mathbb{L}\mathfrak{,l,\rho }\right) $ defines a
corresponding linear isomorphism $h^{\prime }$ of $\mathfrak{l},$ so denote
this subgroup of $GL\left( \mathfrak{l}\right) $ by $\Psi ^{\prime }\left( 
\mathbb{L},\mathfrak{l,\rho }\right) ,$ or just $\Psi ^{\prime }$ if there
is no ambiguity. Under an automorphism, a fundamental vector field $\rho
\left( \xi \right) $ is pushed forward to $\rho \left( h^{\prime }\left( \xi
\right) \right) .$ The map $h\mapsto h^{\prime }$ is a Lie group
homomorphism, so $\Psi ^{\prime }$ is a Lie group of dimension at most $n^{2}
$ where $n=\dim \mathbb{L}.$

Let $\pi :\Psi \longrightarrow \Psi ^{\prime }$ be the homomorphism that
takes $h$ to $h^{\prime }.$ Let $\mathfrak{p}$ and $\mathfrak{p}^{\prime }$
be the Lie algebras of $\Psi $ and $\Psi ^{\prime },$ respectively, then at
least for small $t$, $\pi \left( \exp _{\mathfrak{p}}\left( t\eta \right)
\right) =\exp _{\mathfrak{p}^{\prime }}\left( t\pi _{\ast }\eta \right) $
for $\eta \in \mathfrak{p}.$

\begin{theorem}
\label{thmAutomorph}Suppose $\mathbb{L}$ is a connected parallelizable
manifold of dimension $n$. The automorphism group $\Psi =\Psi \left( \mathbb{%
L}\mathfrak{,l,\rho }\right) $ of $\left( \mathbb{L},\mathfrak{l},\rho
\right) $ is a finite-dimensional Lie group of dimension at most $n^{2}+n.$
\end{theorem}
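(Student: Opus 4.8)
The plan is to realise $\Psi $ as a group acting \emph{freely} on the finite-dimensional frame bundle $F\mathbb{L}$ through the first-order data of an automorphism at a single base point, and then to read off both the Lie group structure and the dimension bound from this action. The starting point is the geometric meaning of the product $\xi \cdot s$. Since $\nabla =\rho \circ d\circ \rho ^{-1}$, covariant differentiation along a curve $\gamma $ is ordinary differentiation of components in the trivialisation, so $\frac{D}{dt}\dot{\gamma}=\rho _{\gamma }\left( \frac{d}{dt}\left( \rho _{\gamma }^{-1}\dot{\gamma}\right) \right) $, and $\gamma $ is a geodesic exactly when $\rho _{\gamma }^{-1}\dot{\gamma}$ is constant, i.e. exactly when $\gamma $ is an integral curve of a fundamental vector field as in (\ref{floweq4}). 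Hence the geodesics of $\nabla $ are the curves $t\mapsto \left( t\xi \right) \cdot s$, and $\xi \cdot s=\exp _{s}^{\nabla }\left( \rho _{s}\xi \right) $ is the connection exponential map. By the remark following (\ref{pullbackconn}), $h$ is an automorphism if and only if $h^{\ast }\nabla =\nabla $, so $\Psi $ is exactly the group of affine diffeomorphisms of $\left( \mathbb{L},\nabla \right) $; correspondingly, Lemma \ref{lemPseudoProd}, i.e. the identity $h\left( \xi \cdot s\right) =h^{\prime }\left( \xi \right) \cdot h\left( s\right) $ of (\ref{fhpseudo}), is precisely the statement that an affine map intertwines exponential maps, with $h^{\prime }$ the $\rho $-conjugate of $dh$.

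Next I would establish the key rigidity statement: an automorphism is determined by the single value $h\left( s_{0}\right) $ together with the constant linear map $h^{\prime }$. Fix $s_{0}\in \mathbb{L}$ and the global frame $\left\{ X_{i}=\rho \left( e_{i}\right) \right\} $. By Theorem \ref{thmProd} every $p\in \mathbb{L}$ can be written $p=\xi _{1}\cdot \big( \xi _{2}\cdot \cdots \big( \xi _{k}\cdot s_{0}\big) \big) $, and iterating (\ref{fhpseudo}) gives $h\left( p\right) =h^{\prime }\left( \xi _{1}\right) \cdot \big( h^{\prime }\left( \xi _{2}\right) \cdot \cdots \big( h^{\prime }\left( \xi _{k}\right) \cdot h\left( s_{0}\right) \big) \big) $, which depends only on the pair $\left( h\left( s_{0}\right) ,h^{\prime }\right) $. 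Equivalently, assigning to $h$ the frame $\left. dh\right\vert _{s_{0}}\left( \left. X_{i}\right\vert _{s_{0}}\right) =\rho _{h\left( s_{0}\right) }\left( h^{\prime }\left( e_{i}\right) \right) $ at $h\left( s_{0}\right) $ defines an \emph{injective} map $\Phi :\Psi \longrightarrow F\mathbb{L}$. In particular the natural action of $\Psi $ on $F\mathbb{L}$ (by $h\cdot u=\left. dh\right\vert _{p}\left( u\right) $ for a frame $u$ at $p$) is free, since an automorphism fixing one frame must be the identity.

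It remains to upgrade this free action to the Lie group conclusion. The dimension count is then immediate from the injection $\Phi $: $\dim \Psi \leq \dim F\mathbb{L}=n+n^{2}$. One may also organise this through the homomorphism $\pi :\Psi \longrightarrow \Psi ^{\prime }\subseteq GL\left( \mathfrak{l}\right) $, noting $\dim \Psi ^{\prime }\leq n^{2}$, while on $\ker \pi $ (where $h^{\prime }=\func{id}$) the rigidity above shows $h\mapsto h\left( s_{0}\right) $ is injective into $\mathbb{L}$, so $\dim \ker \pi \leq n$, whence $\dim \Psi \leq n^{2}+n$. The genuinely substantive point, and the main obstacle, is that $\Psi $ carries a finite-dimensional Lie group structure at all, rather than being merely an abstract subgroup of the infinite-dimensional $\func{Diff}\left( \mathbb{L}\right) $: one must show that $\Phi \left( \Psi \right) $ is a closed embedded submanifold of $F\mathbb{L}$ on which the group operations are smooth. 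This is exactly the content of the classical theory of automorphisms of affine connections (Kobayashi--Nomizu), whose theorem that the affine transformation group of an $n$-dimensional manifold with a linear connection is a Lie group of dimension at most $n^{2}+n$ can be invoked once the identification of $\Psi $ with the affine group of $\left( \mathbb{L},\nabla \right) $ and the freeness of the frame-bundle action are in place.
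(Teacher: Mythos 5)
Your proposal is correct, and its core rigidity step is the same as the paper's: you use Theorem \ref{thmProd} to write any $p$ as a finite left product $\xi _{1}\cdot (\xi _{2}\cdot \cdots (\xi _{k}\cdot s_{0}))$ and Lemma \ref{lemPseudoProd} to conclude that $h$ is determined by the pair $\left( h^{\prime },h\left( s_{0}\right) \right) $, which is exactly the injectivity of the paper's map (\ref{hprimes}). Where you diverge is in how the finite-dimensional Lie group structure is obtained. The paper stays self-contained: it takes a smooth curve $h_{t}$ in $\Psi $ through the identity, expands $u_{p}=\left. \frac{d}{dt}h_{t}\left( p\right) \right\vert _{t=0}$ via the finite-product decomposition as in (\ref{up}), and reads off that the differential of $h\mapsto \left( h^{\prime },h\left( s\right) \right) $ is injective, so that the map is an injective immersion and hence an embedding of $\Psi $ into $\Psi ^{\prime }\times \mathbb{L}$, giving the bound $n^{2}+n$ directly. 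You instead identify $\Psi $ with the affine transformation group of the flat connection $\nabla =\rho \circ d\circ \rho ^{-1}$ (an identification the paper itself records in the remark after Definition \ref{defAutPar}, and your observation that $\xi \cdot s=\exp _{s}^{\nabla }\left( \rho _{s}\xi \right) $ is a nice way to see why Lemma \ref{lemPseudoProd} holds), lift the action to a free action on the frame bundle $F\mathbb{L}$, and invoke the Kobayashi--Nomizu theorem that the affine group of an $n$-manifold is a Lie group of dimension at most $n^{2}+n$. Your route buys a cleaner conceptual placement of the theorem within classical theory and makes the origin of the bound $n+n^{2}=\dim F\mathbb{L}$ transparent, at the cost of importing a substantial external result precisely at the one genuinely delicate point (that $\Psi $ is a Lie group at all, not merely an abstract subgroup of $\func{Diff}\left( \mathbb{L}\right) $); the paper's route proves that point by the explicit immersion computation instead. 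Both arguments are sound, and your alternative decomposition of the dimension count through $\ker \pi $ (where $h^{\prime }=\func{id}$ forces $h\mapsto h\left( s_{0}\right) $ to be injective, so $\dim \ker \pi \leq n$) is a valid refinement of the same estimate.
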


\begin{proof}
The Lie group structure of $\Psi $ is clearly induced from $\func{Diff}%
\left( \mathbb{L}\right) ,$ so it is sufficient to show that $\Psi $ is a
finite-dimensional manifold. Fix any $s\in \mathbb{L}$, and define the
smooth map $\Psi \longrightarrow \Psi ^{\prime }\times \mathbb{L}$ given for
any $h\in \Psi $ by 
\begin{equation}
h\mapsto \left( h^{\prime },h\left( s\right) \right)  \label{hprimes}
\end{equation}%
where $h^{\prime }\in GL\left( \mathfrak{l}\right) $ is the corresponding
element of $\Psi ^{\prime }.$ We will show that this map is injective.
Indeed, suppose $\tilde{h}\in \Psi $ also maps to $\left( h^{\prime
},h\left( s\right) \right) .$ Then, consider $k=h^{-1}\tilde{h}\mapsto
\left( \func{id},s\right) .$ Recall from Theorem \ref{thmProd} that any
element $p\in \mathbb{L}$ can be written as a finite product $p=\xi
_{1}\cdot \left( \xi _{2}\cdot ...\left( \xi _{m}\cdot s\right) \right) $
for $\xi _{i}\in \mathfrak{l}.$ Then, applying $k$ to $p$ using Lemma \ref%
{lemPseudoProd} gives 
\begin{equation*}
k\left( p\right) =k^{\prime }\left( \xi _{1}\right) \cdot \left( k^{\prime
}\left( \xi _{2}\right) \cdot ...\left( k^{\prime }\left( \xi _{m}\right)
\cdot k\left( s\right) \right) \right) =p,
\end{equation*}%
since $k^{\prime }=\func{id}$ and $k\left( s\right) =s$. Hence, $k=\func{id}$%
, and hence (\ref{hprimes}) is an injective map.

Suppose $h_{t}\in \Psi \subset \func{Diff}\left( \mathbb{L}\right) ,$ is a
smooth $1$-parameter family with $h_{0}=\func{id}.$ Then, for a fixed $s\in 
\mathbb{L},$ $h_{t}\left( s\right) $ is a smooth curve on $\mathbb{L},$ with 
$u_{s}=\left. \frac{d}{dt}h_{t}\left( s\right) \right\vert _{t=0}\in T_{s}%
\mathbb{L}.$ The vector field $u\in \Gamma \left( T\mathbb{L}\right) $ is
then the tangent vector to the curve $h_{t}\in \Psi $ at $t=0$.

Recall from Theorem \ref{thmProd} that any element $p\in \mathbb{L}$ can be
written as a finite product $p=\xi _{1}\cdot \left( \xi _{2}\cdot ...\left(
\xi _{m}\cdot s\right) \right) $ for $\xi _{i}\in \mathfrak{l},$ then 
\begin{eqnarray}
u_{p} &=&\left. \frac{d}{dt}h_{t}\left( p\right) \right\vert _{t=0}  \notag
\\
&=&\left. \frac{d}{dt}h_{t}\left( \xi _{1}\cdot \left( \xi _{2}\cdot
...\left( \xi _{m}\cdot s\right) \right) \right) \right\vert _{t=0}  \notag
\\
&=&\left. \frac{d}{dt}h_{t}^{\prime }\left( \xi _{1}\right) \cdot \left(
h_{t}^{\prime }\left( \xi _{2}\right) \cdot ...\left( h_{t}^{\prime }\left(
\xi _{m}\right) \cdot h_{t}\left( s\right) \right) \right) \right\vert _{t=0}
\notag \\
&=&\left. \frac{d}{dt}h_{t}^{\prime }\left( \xi _{1}\right) \cdot \left( \xi
_{2}\cdot ...\left( \xi _{m}\cdot s\right) \right) \right\vert _{t=0}+...
\label{up} \\
&&+\left. \frac{d}{dt}\xi _{1}\cdot \left( \xi _{2}\cdot ...\left(
h_{t}^{\prime }\left( \xi _{m}\right) \cdot s\right) \right) \right\vert
_{t=0}  \notag \\
&&+\left. \frac{d}{dt}\xi _{1}\cdot \left( \xi _{2}\cdot ...\left( \xi
_{m}\cdot \left( h_{t}\left( s\right) /s\cdot s\right) \right) \right)
\right\vert _{t=0}  \notag
\end{eqnarray}%
In particular, consider the linear map 
\begin{equation}
u\mapsto \left( \left. \frac{d}{dt}h_{t}^{\prime }\right\vert _{t=0},\left. 
\frac{d}{dt}h_{t}\left( s\right) /s\right\vert _{t=0}\right) \in \mathfrak{p}%
^{\prime }\times \mathfrak{l.}  \label{umap}
\end{equation}%
This is the differential of the map (\ref{hprimes}) at identity. From (\ref%
{up}) we see that if $\left. \frac{d}{dt}h_{t}^{\prime }\right\vert _{t=0}=0$
and $\left. \frac{d}{dt}h_{t}\left( s\right) /s\right\vert _{t=0}=0$, then $%
u=0$. Thus, we see that the map (\ref{umap}) is injective. Similarly, by
translation, the differential of (\ref{hprimes}) will be injective at any $%
h\in \Psi .$ Therefore, (\ref{hprimes}) is an injective smooth immersion,
and thus a smooth embedding. Hence, $\Psi \ $is smoothly embedded in $\Psi
^{\prime }\times \mathbb{L},$ which is at most $\left( n^{2}+n\right) $%
-dimensional.
\end{proof}

Suppose $s\in \mathbb{L}$ is fixed, then recall from (\ref{stildeprod}) that
the product $\circ _{s}$on $\mathfrak{l}$ is defined by $\eta \circ _{s}\xi
=\left( \eta \cdot \left( \xi \cdot s\right) \right) /s$ for sufficiently
small $\eta ,\xi \in \mathfrak{l}.$ Now for $h\in \Psi $ and the
corresponding $h^{\prime }\in \Psi ^{\prime }$, consider 
\begin{eqnarray}
h^{\prime }\left( \eta \circ _{s}\xi \right)  &=&h\left( \eta \cdot \left(
\xi \cdot s\right) \right) /h\left( s\right)   \notag \\
&=&h^{\prime }\left( \eta \right) \cdot \left( h^{\prime }\left( \xi \right)
\cdot h\left( s\right) \right) /h\left( s\right)   \notag \\
&=&h^{\prime }\left( \eta \right) \circ _{h\left( s\right) }h^{\prime
}\left( \xi \right) .  \label{hsprod}
\end{eqnarray}%
Equivalently, we can write 
\begin{equation}
h^{\prime }\left( \eta \circ _{s}\xi \right) \cdot h\left( s\right)
=h^{\prime }\left( \eta \right) \cdot \left( h^{\prime }\left( \xi \right)
\cdot h\left( s\right) \right) ,  \label{hcomp}
\end{equation}%
or 
\begin{equation}
h^{\prime }\left( \eta \circ _{s}\xi \right) \circ _{s}\left( h\left(
s\right) /s\right) =h^{\prime }\left( \eta \right) \circ _{s}\left(
h^{\prime }\left( \xi \right) \circ _{s}\left( h\left( s\right) /s\right)
\right) ,  \label{hcomp2}
\end{equation}

\begin{remark}
In the theory of loops and quasigroups, a map $h^{\prime }$ that satisfies
the property $h^{\prime }\left( xy\right) A=h^{\prime }\left( x\right)
\left( h^{\prime }\left( y\right) A\right) $ is known as a \emph{%
pseudoautomorphism} with \emph{companion} $A$. Using this language, we can
say that $h^{\prime }$ has \emph{companion }$h\left( s\right) /s$ with
respect to $s.$ The group $\func{Aut}\left( \mathfrak{l},\circ _{s}\right) $
of automorphisms of $\circ _{s}$ is then defined as the subset of elements
of $\Psi ^{\prime }$ which have companion $0.$ This then corresponds to the
stabilizer $\func{Stab}_{\Psi }\left( s\right) $ of $s.$
\end{remark}

The group $\Psi $ also acts on the bracket $\left[ \cdot ,\cdot \right]
^{\left( s\right) }.$ As we noted above, on Lie groups, the map $h^{\prime }$
is the differential of $h$ at identity, and we know this is a Lie algebra
isomorphism. However, in the more general case, the map $h^{\prime }$ is an
algebra isomorphism from $\left( \mathfrak{l},\left[ \cdot ,\cdot \right]
^{\left( s\right) }\right) $ to $\left( \mathfrak{l},\left[ \cdot ,\cdot %
\right] ^{\left( h\left( s\right) \right) }\right) .$ 

\begin{theorem}
\label{thmhbrack}Suppose $s\in \mathbb{L}$ and $h\in \Psi $, then for any $%
\xi ,\eta \in \mathfrak{l},$ 
\begin{equation}
h^{\prime }\left( \left[ \xi ,\eta \right] ^{\left( s\right) }\right) =\left[
h^{\prime }\left( \xi \right) ,h^{\prime }\left( \eta \right) \right]
^{\left( h\left( s\right) \right) }.  \label{hbrack}
\end{equation}
\end{theorem}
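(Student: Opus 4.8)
The plan is to prove the identity directly from the definition (\ref{lbrack}) of the bracket, exploiting the fact that an automorphism carries fundamental vector fields to fundamental vector fields. Concretely, the commuting square (\ref{figmorphism2}) says precisely that $h_{\ast}|_p \circ \rho_p = \rho_{h(p)}\circ h'$ for every $p\in\mathbb{L}$, i.e. that the fundamental vector field $\rho(\xi)$ is $h$-related to $\rho(h'(\xi))$; since $h$ is a diffeomorphism, ``$h$-related'' is here an honest relation between globally defined vector fields. The whole argument then reduces to the naturality of the Lie bracket of vector fields under a diffeomorphism.

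First I would record the $h$-relatedness: because $h'$ is constant (independent of the base point), the pointwise identity $h_{\ast}|_p(\rho_p(\xi))=\rho_{h(p)}(h'(\xi))$ holds for all $p$, so $h_{\ast}(\rho(\xi))=\rho(h'(\xi))$ as vector fields. Next, I would invoke the standard fact that for a diffeomorphism $h$ and $h$-related fields, $h_{\ast}[\rho(\xi),\rho(\eta)]=[h_{\ast}\rho(\xi),h_{\ast}\rho(\eta)]=[\rho(h'(\xi)),\rho(h'(\eta))]$. Evaluating at $s$ and reading off the image point $h(s)$ gives $[\rho(h'(\xi)),\rho(h'(\eta))]|_{h(s)} = h_{\ast}|_s\big([\rho(\xi),\rho(\eta)]|_s\big)$. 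Finally I would feed this into the definition (\ref{lbrack}) at the point $h(s)$ and use the ``inverse'' form of the square, namely $\rho_{h(s)}^{-1}\circ h_{\ast}|_s = h'\circ\rho_s^{-1}$, which follows at once from (\ref{figmorphism2}); combined with the linearity of $h'$ (so that $h'$ commutes with the overall sign) this yields $[h'(\xi),h'(\eta)]^{(h(s))}=h'\big(-\rho_s^{-1}([\rho(\xi),\rho(\eta)]|_s)\big)=h'([\xi,\eta]^{(s)})$, as required.

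There is essentially no hard step here; the only points demanding care are that the commuting diagram must be used in both directions (once to push the fundamental fields forward, once in the form $\rho_{h(s)}^{-1}\circ h_{\ast}|_s=h'\circ\rho_s^{-1}$), and that naturality of the bracket is applied for a genuine diffeomorphism rather than a mere smooth map, so that $h_{\ast}$ is invertible and $h$-relatedness is symmetric.

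Alternatively, one could avoid vector fields entirely and argue through the loop products: by (\ref{brackcomm}) the bracket is the infinitesimal commutator of $\circ_s$, and by (\ref{hsprod}) the automorphism satisfies $h'(\alpha\circ_s\beta)=h'(\alpha)\circ_{h(s)}h'(\beta)$. Since $h'$ is linear it commutes with the mixed derivative $\frac{d^2}{dt_1 dt_2}$, so applying $h'$ to (\ref{brackcomm}) and substituting $h'(t_1\xi)=t_1 h'(\xi)$ and $h'(t_2\eta)=t_2 h'(\eta)$ converts the infinitesimal commutator of $\circ_s$ into the infinitesimal commutator of $\circ_{h(s)}$ for the arguments $h'(\xi),h'(\eta)$, which is exactly $[h'(\xi),h'(\eta)]^{(h(s))}$. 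This second route uses only results about the product $\circ_s$ and may be preferable if one wishes to keep the discussion inside the loop-theoretic framework.
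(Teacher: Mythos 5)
Your primary argument is correct, and it takes a genuinely different route from the paper's. The paper proves (\ref{hbrack}) by staying inside the loop-theoretic framework: it combines the characterization of the bracket as the infinitesimal commutator of $\circ_{s}$ from Theorem \ref{thmBrackAssoc} (equation (\ref{brackcomm})) with the pseudoautomorphism identity (\ref{hsprod}), exactly as in the second, ``alternative'' route you sketch at the end --- so your alternative is in fact the paper's own proof, word for word in spirit. Your main argument instead works directly with the definition (\ref{lbrack}): the commuting square (\ref{figmorphism2}) gives $h_{\ast}(\rho(\xi))=\rho(h'(\xi))$ as globally defined vector fields, naturality of the Lie bracket under the diffeomorphism $h$ gives $h_{\ast}[\rho(\xi),\rho(\eta)]=[\rho(h'(\xi)),\rho(h'(\eta))]$, and the inverted form $\rho_{h(s)}^{-1}\circ h_{\ast}|_{s}=h'\circ\rho_{s}^{-1}$ of the square converts this into (\ref{hbrack}). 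This is arguably the more elementary and self-contained proof, since it needs neither Theorem \ref{thmBrackAssoc} nor the local product $\circ_{s}$ (and hence no smallness assumptions on the arguments); what the paper's route buys is uniformity with the surrounding discussion of $\circ_{s}$ and pseudoautomorphisms, and it generalizes immediately to statements about the associator $a^{(s)}$, where no vector-field-naturality shortcut is available. Both arguments are sound.
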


\begin{proof}
Indeed, from Theorem \ref{thmBrackAssoc} and using (\ref{hsprod}), we obtain 
\begin{eqnarray*}
h^{\prime }\left( \left[ \xi ,\eta \right] ^{\left( s\right) }\right)
&=&\left. \frac{d^{2}}{dt_{1}dt_{2}}\left( h^{\prime }\left( \left( t_{1}\xi
\right) \circ _{s}\left( t_{2}\eta \right) \right) -h^{\prime }\left( \left(
t_{2}\eta \right) \circ _{s}\left( t_{1}\xi \right) \right) \right)
\right\vert _{t_{1}=t_{2}=0} \\
&=&\left. \frac{d^{2}}{dt_{1}dt_{2}}\left( \left( t_{1}h^{\prime }\left( \xi
\right) \right) \circ _{h\left( s\right) }\left( t_{2}h^{\prime }\left( \eta
\right) \right) -\left( \left( t_{2}h^{\prime }\left( \eta \right) \right)
\circ _{h\left( s\right) }\left( t_{1}h^{\prime }\left( \xi \right) \right)
\right) \right) \right\vert _{t_{1}=t_{2}=0} \\
&=&\left[ h^{\prime }\left( \xi \right) ,h^{\prime }\left( \eta \right) %
\right] ^{h\left( s\right) }.
\end{eqnarray*}
\end{proof}

\section{Products of spheres}

\setcounter{equation}{0} \label{sectProdSphere}As mentioned in\ Example \ref%
{exProdSphere}, a large family of parallelizable manifolds that are not Lie
groups or Moufang loops is given by products of spheres where one factor is
an odd-dimensional sphere. As it is well-known, the only parallelizable
spheres are $S^{1},S^{3},S^{7}$, which correspond to unit norm sets of
complex numbers, quaternions, and octonions. In particular, $S^{1}$ and $%
S^{3}$ are both Lie groups, while $S^{7}$ is the Moufang loop of unit
octonions. In \cite{BruniSpheres,Parton1,Parton2} explicit parallelizations
on products of spheres have been computed and this will serve as an
important example of the framework developed in this paper.

In particular, let us consider in detail the case of $S^{m}\times S^{1}$
from \cite{Parton1}. Consider $S^{m}$ as embedded in $\mathbb{R}^{m+1}$ in a
standard way%
\begin{equation}
S^{m}=\left\{ \left( x_{1},...,x_{m+1}\right) \in \mathbb{R}%
^{m+1}:\left\vert x\right\vert ^{2}=x_{1}^{2}+...+x_{m+1}^{2}=1\right\} .
\label{Smdef}
\end{equation}%
Consider the standard coordinate frame $\left\{ \partial _{x_{i}}\right\} $
on $\mathbb{R}^{m+1}$. Then, the orthogonal projections $\left\{
M_{i}\right\} $ of these vector fields onto $S^{m}$ are given by 
\begin{equation}
M_{i}=\partial _{x_{i}}-x_{i}M,  \label{Mi}
\end{equation}%
where $M=\sum_{i=1}^{m+1}x_{i}\partial _{x_{i}}$ is the normal vector field
to $S^{m}\subset \mathbb{R}^{m+1}.$ Also consider $\phi $ as a coordinate on 
$S^{1}$ with the associated vector field $\partial _{\phi }.$ Then, a
parallelization of $S^{m}\times S^{1}$ is given by the following.

\begin{theorem}
Consider $S^{m}\subset \mathbb{R}^{m+1}$ and for $i=1,...,n+1$, define%
\begin{equation}
f_{i}=M_{i}+x_{i}\partial _{\phi }.  \label{bi}
\end{equation}%
Then, the set $\left\{ f_{i}\right\} $ is global frame on $S^{m}\times S^{1}.
$ In particular, it is orthonormal with respect to the product metric and
satisfies 
\begin{equation}
\left[ f_{i},f_{j}\right] =x_{i}f_{j}-x_{j}f_{i}.  \label{bbrack}
\end{equation}
\end{theorem}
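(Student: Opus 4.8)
The plan is to verify the three assertions — that $\{f_i\}$ is a frame, that it is orthonormal, and that it satisfies the stated bracket relations — by direct computation in the ambient coordinates of $\mathbb{R}^{m+1}$, exploiting the product structure of $S^m \times S^1$ throughout. Note there are $m+1$ fields $f_i$ and $\dim\left( S^m \times S^1\right) = m+1$, so orthonormality will automatically deliver the frame property.

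First I would establish orthonormality, since this simultaneously yields the frame property. The product metric splits as a sum of the round metric on $S^m$ and the standard metric on $S^1$, and $f_i$ decomposes accordingly into its tangential part $M_i$ and its circle part $x_i \partial_\phi$, giving $\left\langle f_i, f_j\right\rangle = \left\langle M_i, M_j\right\rangle + x_i x_j \left\langle \partial_\phi, \partial_\phi\right\rangle$. The second factor equals $1$, and using $M_i = \partial_{x_i} - x_i M$ together with $\left\langle \partial_{x_i}, \partial_{x_j}\right\rangle = \delta_{ij}$, $\left\langle M, \partial_{x_j}\right\rangle = x_j$, and $\left\vert x\right\vert^2 = 1$ on $S^m$, one finds $\left\langle M_i, M_j\right\rangle = \delta_{ij} - x_i x_j$. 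The term $-x_i x_j$ coming from the orthogonal projection onto the sphere exactly cancels the $+x_i x_j$ coming from the circle direction, so $\left\langle f_i, f_j\right\rangle = \delta_{ij}$. Orthonormality forces pointwise linear independence, and with $m+1$ fields on an $(m+1)$-manifold this makes $\{f_i\}$ a global frame.

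For the bracket relation I would expand $[f_i, f_j] = [M_i + x_i\partial_\phi,\, M_j + x_j\partial_\phi]$ by bilinearity. Since each $M_i$ depends only on the sphere coordinates and $\partial_\phi$ only on $\phi$, the fields $M_i$ and $\partial_\phi$ commute and $\partial_\phi x_k = 0$; hence $[x_i\partial_\phi, x_j\partial_\phi]$ vanishes and the two cross terms collapse to $\left( M_i(x_j) - M_j(x_i)\right)\partial_\phi$. A short computation gives $M_i(x_j) = \delta_{ij} - x_i x_j$, which is symmetric in $i,j$, so the entire $\partial_\phi$ contribution cancels and $[f_i, f_j] = [M_i, M_j]$. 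It then remains to compute the tangential bracket in $\mathbb{R}^{m+1}$: using $[\partial_{x_i}, M] = \partial_{x_i}$ and $M x_k = x_k$, the Leibniz rule yields $[M_i, M_j] = x_i \partial_{x_j} - x_j \partial_{x_i}$. Finally I would observe that $x_i f_j - x_j f_i = x_i M_j - x_j M_i$ (the $\partial_\phi$ parts again cancelling) and that $x_i M_j - x_j M_i = x_i\partial_{x_j} - x_j\partial_{x_i}$, which matches the bracket exactly.

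The computation is essentially mechanical, so the only genuine obstacle is bookkeeping: one must track which terms carry a $\partial_\phi$ factor and confirm that every such contribution cancels by the symmetry of $M_i(x_j)$, and one must check that the purely tangential bracket $[M_i, M_j]$ reassembles into the combination $x_i f_j - x_j f_i$ rather than some unrelated expression. These cancellations are the conceptual heart of the statement: they show how the circle factor is precisely tuned, via the coefficients $x_i$, both to absorb the failure of the $M_i$ to be orthonormal and to close up the bracket.
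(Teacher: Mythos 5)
Your computation is correct and complete: the cancellation $\left\langle M_i,M_j\right\rangle + x_i x_j = \delta_{ij}$ gives orthonormality (hence the frame property, since there are $m+1$ fields on an $(m+1)$-manifold), the symmetry of $M_i(x_j)=\delta_{ij}-x_ix_j$ kills all $\partial_\phi$ contributions to the bracket, and $[M_i,M_j]=x_i\partial_{x_j}-x_j\partial_{x_i}=x_iM_j-x_jM_i=x_if_j-x_jf_i$ checks out. The paper itself states this theorem without proof, importing it from the cited references on parallelizations of products of spheres, so your direct verification in ambient coordinates simply supplies the omitted details; there is nothing in the paper to compare it against, and I see no gap.
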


Now let $\mathbb{L}=S^{m}\times S^{1}$, so that each $s\in \mathbb{L}$ is
given by $s=\left( x,\phi \right) $ for $x\in S^{m}\subset \mathbb{R}^{m+1}$
and $\phi \in S^{1}.$ Also suppose $\mathfrak{l}$ is an $\left( m+1\right) $%
-dimensional real vector space with an inner product $\left\langle \cdot
,\cdot \right\rangle $ and an orthonormal basis $\left\{ e_{i}\right\} .$
Then, for each $s=\left( x,\phi \right) \in \mathbb{L},$ define 
\begin{equation}
\begin{array}{c}
\rho _{s}:\mathfrak{l}\longrightarrow T_{s}\mathbb{L} \\ 
e_{i}\mapsto \left. f_{i}\right\vert _{s}%
\end{array}%
.  \label{rhosprod}
\end{equation}%
In particular, if $\xi =\xi ^{i}e_{i}\in \mathfrak{l},$ then $\rho \left(
\xi \right) =\xi ^{i}f_{i}.$

Consider the derivatives of the coordinate functions in the directions of
the basis vectors $\left\{ f_{i}\right\} .$ We have 
\begin{eqnarray*}
f_{i}\left( x_{j}\right)  &=&\left( \partial _{x_{i}}-x_{i}M\right) x_{j} \\
&=&\delta _{ij}-x_{i}x_{j} \\
f_{i}\left( \phi \right)  &=&x_{i}
\end{eqnarray*}%
In particular, define $c_{ij}=f_{i}\left( x_{j}\right) =\delta
_{ij}-x_{i}x_{j}.$

The bracket on $\mathfrak{l}$ is defined from (\ref{bbrack}) via (\ref%
{lbrack}). In particular, for the basis $\left\{ e_{i}\right\} $ on $%
\mathfrak{l}$ and $s=\left( x,\phi \right) \in \mathbb{L}$, we have 
\begin{equation}
b_{ij}^{\left( s\right) }=:\left[ e_{i},e_{j}\right] ^{\left( s\right)
}=e_{i}x_{j}-e_{j}x_{i}.  \label{spherebrack}
\end{equation}%
Equivalently, given any $\xi ,\eta \in \mathfrak{l}$, we have 
\begin{equation}
\left[ \xi ,\eta \right] ^{\left( s\right) }=\xi \left\langle \eta
,x\right\rangle -\eta \left\langle \xi ,x\right\rangle .
\label{spherebrack2}
\end{equation}%
In particular, note that if both $\xi $ and $\eta $ are orthogonal to $x$,
then $\left[ \xi ,\eta \right] ^{\left( s\right) }=0$ and for any $\xi \in 
\mathfrak{l},$ $\left[ \xi ,x\right] ^{\left( s\right) }=\xi $.

\begin{lemma}
\label{lemSnS1Jac}For each fixed $s$, $\mathfrak{l}$ with the bracket $\left[
\cdot ,\cdot \right] ^{\left( s\right) }$ defines a Lie algebra that is
isomorphic to a semidirect sum $\mathbb{R}^{n}\oplus _{S}\mathbb{R},$ with
bracket $\left[ \left( \xi ,\lambda \right) ,\left( \eta ,\mu \right) \right]
=\left[ \mu \xi -\lambda \eta ,0\right] .$
\end{lemma}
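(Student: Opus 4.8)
The plan is to exploit the orthogonal splitting $\mathfrak{l}=x^{\perp}\oplus\mathbb{R}x$ determined by the inner product, where $x^{\perp}$ is the $m$-dimensional hyperplane orthogonal to the fixed unit vector $x$ (so that $x^{\perp}\cong\mathbb{R}^{n}$ with $n=m$). Since the bracket (\ref{spherebrack2}) depends on $s=(x,\phi)$ only through $x$, and $|x|^{2}=1$ on $S^{m}$, this splitting should display the semidirect structure directly, so that no further invariant-theoretic input is required.

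First I would record the bracket relations adapted to the splitting. For $\xi,\eta\in x^{\perp}$ one has $\langle\xi,x\rangle=\langle\eta,x\rangle=0$, so (\ref{spherebrack2}) gives $[\xi,\eta]^{(s)}=0$; hence $x^{\perp}$ is an abelian subalgebra. For $\xi\in x^{\perp}$ the computation $[\xi,x]^{(s)}=\xi\langle x,x\rangle-x\langle\xi,x\rangle=\xi$ shows that $x$ acts on $x^{\perp}$ as the identity endomorphism. These are exactly the two observations recorded immediately after (\ref{spherebrack2}).

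Next I would write a general element as $\xi+\lambda x$ with $\xi\in x^{\perp}$ and $\lambda\in\mathbb{R}$, and expand the bracket bilinearly. Using $\langle\xi+\lambda x,x\rangle=\lambda$, a short calculation (in which the cross terms $\pm\lambda\mu x$ cancel) yields $[\xi+\lambda x,\eta+\mu x]^{(s)}=\mu\xi-\lambda\eta$, which again lies in $x^{\perp}$. Choosing an orthonormal basis $\{u_{1},\dots,u_{m}\}$ of $x^{\perp}$ and completing it to the orthonormal basis $\{u_{1},\dots,u_{m},x\}$ of $\mathfrak{l}$, the linear isomorphism sending $u_{i}\mapsto(e_{i},0)$ and $x\mapsto(0,1)$ carries $[\cdot,\cdot]^{(s)}$ precisely onto the model bracket $[(\xi,\lambda),(\eta,\mu)]=(\mu\xi-\lambda\eta,0)$ on $\mathbb{R}^{n}\oplus_{S}\mathbb{R}$.

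Finally, to certify that $(\mathfrak{l},[\cdot,\cdot]^{(s)})$ is genuinely a Lie algebra it suffices to check the Jacobi identity for the model bracket, which is the standard semidirect sum of the abelian ideal $\mathbb{R}^{n}$ with $\mathbb{R}$ acting by scalar multiplication. Because the second component of every bracket vanishes, evaluating the Jacobi sum on three elements $\xi+\lambda x$, $\eta+\mu x$, $\zeta+\nu x$ produces terms each of which is a scalar multiple of one of $\xi,\eta,\zeta$, and these cancel in pairs. I do not expect a genuine obstacle here, as the whole argument is an explicit computation; the only point needing care is the bookkeeping of the $x$-component, so that the homothety action $[x,\cdot]^{(s)}=\mathrm{id}$ is read off correctly and the cross terms cancel.
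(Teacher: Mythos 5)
Your proof is correct and follows essentially the same route as the paper: your orthogonal splitting $\mathfrak{l}=x^{\perp}\oplus\mathbb{R}x$ is exactly the paper's isomorphism $F(\xi)=\left(\xi-\left\langle \xi,x\right\rangle x,\left\langle \xi,x\right\rangle \right)$, and the computation $\left[\xi+\lambda x,\eta+\mu x\right]^{\left(s\right)}=\mu\xi-\lambda\eta$ with the cancelling cross terms is the same verification that $F$ intertwines the brackets. The only (harmless) difference is order of operations: the paper checks the Jacobi identity directly for $\left[\cdot,\cdot\right]^{\left(s\right)}$ using the inner-product formula and then exhibits the isomorphism, whereas you transport to the model bracket first and check Jacobi there, which is slightly less computation.
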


\begin{proof}
First let us show that $\left[ \cdot ,\cdot \right] ^{\left( s\right) }$, as
given by (\ref{spherebrack2}) defines a Lie algebra on $\mathfrak{l}.$ We
just need to check the Jacobi identity. Indeed, for $\gamma ,\xi ,\eta \in 
\mathfrak{l},$ 
\begin{eqnarray*}
\left[ \gamma ,\left[ \xi ,\eta \right] ^{\left( s\right) }\right] ^{\left(
s\right) } &=&\left[ \gamma ,\xi \right] ^{\left( s\right) }\left\langle
\eta ,x\right\rangle -\left[ \gamma ,\eta \right] ^{\left( s\right)
}\left\langle \xi ,x\right\rangle  \\
&=&\gamma \left\langle \xi ,x\right\rangle \left\langle \eta ,x\right\rangle
-\xi \left\langle \gamma ,x\right\rangle \left\langle \eta ,x\right\rangle 
\\
&&-\gamma \left\langle \eta ,x\right\rangle \left\langle \xi ,x\right\rangle
+\eta \left\langle \gamma ,x\right\rangle \left\langle \xi ,x\right\rangle 
\\
&=&\eta \left\langle \gamma ,x\right\rangle \left\langle \xi ,x\right\rangle
-\xi \left\langle \gamma ,x\right\rangle \left\langle \eta ,x\right\rangle 
\\
\left[ \xi ,\left[ \eta ,\gamma \right] ^{\left( s\right) }\right] ^{\left(
s\right) } &=&\gamma \left\langle \xi ,x\right\rangle \left\langle \eta
,x\right\rangle -\eta \left\langle \xi ,x\right\rangle \left\langle \gamma
,x\right\rangle  \\
\left[ \eta ,\left[ \gamma ,\xi \right] ^{\left( s\right) }\right] ^{\left(
s\right) } &=&\xi \left\langle \eta ,x\right\rangle \left\langle \gamma
,x\right\rangle -\gamma \left\langle \eta ,x\right\rangle \left\langle \xi
,x\right\rangle 
\end{eqnarray*}%
Adding these terms together shows that indeed the Jacobi identity is
satisfied. Now consider the linear map $F:\mathfrak{l}\longrightarrow 
\mathbb{R}^{n}\oplus \mathbb{R}$, given by 
\begin{equation*}
F\left( \xi \right) =\left( \xi -\left\langle \xi ,x\right\rangle
x,\left\langle \xi ,x\right\rangle \right) .
\end{equation*}%
This is clearly a linear isomorphism. Then, 
\begin{eqnarray*}
F\left( \left[ \xi ,\eta \right] ^{\left( s\right) }\right)  &=&F\left( \xi
\right) \left\langle \eta ,x\right\rangle -F\left( \eta \right) \left\langle
\xi ,x\right\rangle  \\
&=&\left( \xi -\left\langle \xi ,x\right\rangle x,\left\langle \xi
,x\right\rangle \right) \left\langle \eta ,x\right\rangle  \\
&&-\left( \eta -\left\langle \eta ,x\right\rangle x,\left\langle \eta
,x\right\rangle \right) \left\langle \xi ,x\right\rangle  \\
&=&\left( \xi \left\langle \eta ,x\right\rangle -\eta \left\langle \xi
,x\right\rangle ,0\right)  \\
&=&\left[ \left( \xi -\left\langle \xi ,x\right\rangle x,\left\langle \xi
,x\right\rangle \right) ,\left( \eta -\left\langle \eta ,x\right\rangle
x,\left\langle \eta ,x\right\rangle \right) \right]  \\
&=&\left[ F\left( \xi \right) ,F\left( \eta \right) \right] .
\end{eqnarray*}%
Hence, this is a Lie algebra homomorphism, and thus the Lie algebra $\left( 
\mathfrak{l},\left[ \cdot ,\cdot \right] ^{\left( s\right) }\right) $ is
isomorphic to $\mathbb{R}^{n}\oplus _{S}\mathbb{R}.$
\end{proof}

Taking the derivative of the bracket in the direction of $f_{i}$ gives us%
\begin{eqnarray*}
d_{f_{i}}\left( \left[ \xi ,\eta \right] ^{\left( s\right) }\right)  &=&\xi
\eta ^{j}c_{ij}-\eta \xi ^{j}c_{ij} \\
&=&\xi \eta _{i}-\eta \xi _{i}-x_{i}\left[ \xi ,\eta \right] ^{\left(
s\right) }
\end{eqnarray*}%
In particular, this shows that 
\begin{equation}
d_{f_{i}}b_{jk}^{\left( s\right) }=:a_{ijk}^{\left( s\right) }=e_{k}\delta
_{ij}-e_{j}\delta _{ik}-x_{i}b_{jk}^{\left( s\right) }.  \label{sphereaijk}
\end{equation}%
Hence, for $\xi ,\eta ,\gamma \in \mathfrak{l}$, 
\begin{equation}
a^{\left( s\right) }\left( \xi ,\eta ,\gamma \right) =\gamma \left(
\left\langle \xi ,\eta \right\rangle +\left\langle x,\xi \right\rangle
\left\langle x,\eta \right\rangle \right) -\eta \left( \left\langle \xi
,\gamma \right\rangle +\left\langle x,\xi \right\rangle \left\langle
x,\gamma \right\rangle \right) .  \label{spherecircas}
\end{equation}

Recall the notion of a \emph{Lie triple system \cite{JacobsonTriple}: }this
is a vector space equipped with a skew-symmetric triple product $\left[
\cdot ,\cdot ,\cdot \right] $, that is skew-symmetric in two entries, has a
vanishing sum of cyclic permutations(i.e. satisfies the Jacobi identity),
and also that the linear map defined by fixing two of the entries of $\left[
\cdot ,\cdot ,\cdot \right] $ is a derivation of the triple product.
Consider now $\mathfrak{l}$ equipped with the triple product $a^{\left(
s\right) }.$ We know that $a^{\left( s\right) }$ is skew-symmetric in the
last two entries. From (\ref{jacid2}), we also see that $a^{\left( s\right) }
$ satisfies the Jacobi identity if and only if $\left( \mathfrak{l}%
,b^{\left( s\right) }\right) $ is a Lie algebra. Therefore, in this case, it
just suffices to check the derivation property to figure out if $\left( 
\mathfrak{l},a^{\left( s\right) }\right) $ is a Lie triple system.

\begin{theorem}
\label{thmTriple}For each $s\in \mathbb{L}$, the vector space $\mathfrak{l}$
equipped with the triple product $a^{\left( s\right) }$ forms a Lie triple
system.
\end{theorem}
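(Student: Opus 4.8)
The plan is to lean on the reduction already set up immediately before the statement. By inspection of (\ref{spherecircas}) the map $a^{(s)}$ is skew-symmetric in its last two arguments, and by Lemma \ref{lemSnS1Jac} the pair $(\mathfrak{l},b^{(s)})$ is a Lie algebra, so (\ref{jacid2}) forces the cyclic sum of $a^{(s)}$ to vanish. Thus the first two axioms of a Lie triple system hold for free, and the \emph{only} thing left to verify is the derivation property: that for fixed $\eta,\gamma$ the linear map $D_{\eta,\gamma}\colon \xi \mapsto a^{(s)}(\xi,\eta,\gamma)$, which varies the single non-skew slot, is a derivation of the trilinear product $a^{(s)}$.

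The first step is to rewrite (\ref{spherecircas}) compactly. Introducing the symmetric bilinear form $\langle u,v\rangle_{x} := \langle u,v\rangle + \langle x,u\rangle\langle x,v\rangle$ on $\mathfrak{l}$, which is positive definite as a sum of a positive-definite and a positive-semidefinite form, formula (\ref{spherecircas}) becomes $a^{(s)}(\xi,\eta,\gamma) = \gamma\langle\xi,\eta\rangle_{x} - \eta\langle\xi,\gamma\rangle_{x}$. I would then recognize the operator $R(\gamma,\eta) := D_{\eta,\gamma}$, sending $\xi \mapsto \langle\xi,\eta\rangle_{x}\gamma - \langle\xi,\gamma\rangle_{x}\eta$, as exactly the constant-sectional-curvature curvature operator of the inner product $\langle\cdot,\cdot\rangle_{x}$, so that $a^{(s)}(\xi,\eta,\gamma) = R(\gamma,\eta)\xi$. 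Two elementary computations set up the argument: first, $R(\gamma,\eta)$ is skew-adjoint with respect to $\langle\cdot,\cdot\rangle_{x}$, hence $R(\gamma,\eta) \in \mathfrak{so}(\langle\cdot,\cdot\rangle_{x})$; second, the assignment $(\gamma,\eta)\mapsto R(\gamma,\eta)$ is $O(\langle\cdot,\cdot\rangle_{x})$-equivariant, i.e.\ $R(g\gamma,g\eta) = g\,R(\gamma,\eta)\,g^{-1}$ for every $g$ preserving $\langle\cdot,\cdot\rangle_{x}$, since $R$ is built canonically out of $\langle\cdot,\cdot\rangle_{x}$.

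Differentiating the equivariance relation at the identity in the direction $A = R(\gamma,\eta) \in \mathfrak{so}(\langle\cdot,\cdot\rangle_{x})$ yields the operator identity $[R(\gamma,\eta), R(\delta,\beta)] = R(R(\gamma,\eta)\delta,\beta) + R(\delta, R(\gamma,\eta)\beta)$, and unwinding this identity applied to an arbitrary $\alpha \in \mathfrak{l}$ is precisely the required statement $D_{\eta,\gamma}(a^{(s)}(\alpha,\beta,\delta)) = a^{(s)}(D_{\eta,\gamma}\alpha,\beta,\delta) + a^{(s)}(\alpha,D_{\eta,\gamma}\beta,\delta) + a^{(s)}(\alpha,\beta,D_{\eta,\gamma}\delta)$, since $a^{(s)}(\alpha,\beta,\delta) = R(\delta,\beta)\alpha$. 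This closes the verification.

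An entirely self-contained alternative avoids the $O(\langle\cdot,\cdot\rangle_{x})$ language and simply substitutes the compact formula for $a^{(s)}$ into both sides of the derivation identity and expands: every resulting term is one of the vectors $\alpha,\beta,\delta$ times a product of two factors $\langle\cdot,\cdot\rangle_{x}$, and matching the two sides uses nothing beyond the symmetry of $\langle\cdot,\cdot\rangle_{x}$. I expect the main, and indeed the only, obstacle to be bookkeeping: keeping straight which slot is the non-skew one, so that $D_{\eta,\gamma}$ is inserted into the correct argument of $a^{(s)}$ in each of the three terms; once that is organized the cancellations are forced by the skew-adjointness of $R(\gamma,\eta)$.
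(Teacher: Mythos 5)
Your proof is correct, and the first half — skew-symmetry in the last two slots read off from (\ref{spherecircas}), plus the vanishing of the cyclic sum via Lemma \ref{lemSnS1Jac} and (\ref{jacid2}) — is exactly the reduction the paper makes. Where you genuinely diverge is the derivation property (\ref{jactrip1}): the paper verifies it by brute force in Appendix \ref{secApp}, expanding every term of both sides against the explicit formula and checking that the coefficients of each of $u,v,\gamma,\xi,\eta$ match. Your route packages the same data conceptually: writing $\langle u,v\rangle _{x}=\langle u,v\rangle +\langle x,u\rangle \langle x,v\rangle $ (positive definite, hence nondegenerate), the formula (\ref{spherecircas}) becomes $a^{\left( s\right) }\left( \xi ,\eta ,\gamma \right) =R\left( \gamma ,\eta \right) \xi $ with $R\left( \gamma ,\eta \right) \xi =\left\langle \xi ,\eta \right\rangle _{x}\gamma -\left\langle \xi ,\gamma \right\rangle _{x}\eta $ the constant-curvature operator of $\left\langle \cdot ,\cdot \right\rangle _{x}$, each $R\left( \gamma ,\eta \right) $ is skew-adjoint, and the identity $\left[ A,R\left( \delta ,\beta \right) \right] =R\left( A\delta ,\beta \right) +R\left( \delta ,A\beta \right) $ for skew-adjoint $A$ (obtained by differentiating $R\left( g\delta ,g\beta \right) =gR\left( \delta ,\beta \right) g^{-1}$ along $g=e^{tA}$, or by a two-line direct check) applied with $A=R\left( \gamma ,\eta \right) $ is precisely (\ref{jactrip1}). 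This buys a shorter and more illuminating argument: it identifies $\left( \mathfrak{l},a^{\left( s\right) }\right) $ as the standard Lie triple system of a constant-curvature symmetric space, so the derivation axiom is the usual $\mathfrak{so}$-equivariance of the curvature tensor rather than an accident of cancellation; the paper's computation, by contrast, is self-contained and requires no identification of the structure. Your fallback (direct substitution using the compact form $\left\langle \cdot ,\cdot \right\rangle _{x}$) is essentially the appendix calculation with better bookkeeping. One small caution: keep the slot conventions straight — the inner derivation is $\gamma \mapsto a^{\left( s\right) }\left( \gamma ,\xi ,\eta \right) =R\left( \eta ,\xi \right) \gamma $, i.e.\ it acts through the first (non-skew) argument, which is exactly how you set it up, so no gap results.
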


\begin{proof}
Since $b^{\left( s\right) }$ satisfies the Jacobi identity, from (\ref%
{jacid2}) we know that 
\begin{equation*}
a^{\left( s\right) }\left( \xi ,\eta ,\gamma \right) +a^{\left( s\right)
}\left( \eta ,\gamma ,\xi \right) +a^{\left( s\right) }\left( \gamma ,\xi
,\eta \right) =0.
\end{equation*}%
To show that $\left( \mathfrak{l},a^{\left( s\right) }\right) $ forms a Lie
triple system, we just need to show that for any $\xi ,\eta \in \mathfrak{l}$%
, the map $a_{\xi ,\eta }^{\left( s\right) }:\mathfrak{l}\longrightarrow 
\mathfrak{l}$ given by $a_{\xi ,\eta }^{\left( s\right) }\left( \gamma
\right) =a^{\left( s\right) }\left( \gamma ,\xi ,\eta \right) $ is a
derivation for $a^{\left( s\right) }$. That is, for any $\xi ,\eta ,\gamma
,u,v\in \mathfrak{l},$ we need to show that 
\begin{eqnarray}
a_{\xi ,\eta }^{\left( s\right) }\left( a^{\left( s\right) }\left(
u,v,\gamma \right) \right) &=&a^{\left( s\right) }\left( a^{\left( s\right)
}\left( u,\xi ,\eta \right) ,v,\gamma \right)  \label{jactrip1} \\
&&+a^{\left( s\right) }\left( u,a^{\left( s\right) }\left( v,\xi ,\eta
\right) ,\gamma \right)  \notag \\
&&+a^{\left( s\right) }\left( u,v,a^{\left( s\right) }\left( \gamma ,\xi
,\eta \right) \right) .  \notag
\end{eqnarray}%
This is a straightforward, but tedious calculation, which we show in
Appendix \ref{secApp}.
\end{proof}

To define products on $\mathfrak{l}$ and $\mathbb{L},$ we need to consider
integral curves of fundamental vector fields. In (\ref{floweq4}) suppose the
curve $p\left( t\right) \in \mathbb{L}$ has coordinates $\left( x\left(
t\right) ,\phi \left( t\right) \right) .$ Suppose $\xi \neq 0$, and let us
rewrite $\rho \left( \xi \right) $ in terms of the coordinate basis:%
\begin{eqnarray*}
\rho \left( \xi \right)  &=&\xi ^{i}f_{i} \\
&=&\xi ^{i}\partial _{x_{i}}-\xi ^{i}x_{i}\sum_{i=1}^{m+1}x_{j}\partial
_{x_{j}}+\xi ^{i}x_{i}\partial _{\phi }.
\end{eqnarray*}%
Setting $\hat{\xi}=\xi ^{i}x_{i}$, we can thus rewrite the flow equation (%
\ref{floweq4}) as 
\begin{equation}
\left\{ 
\begin{array}{c}
\frac{dx\left( t\right) }{dt}=\xi -\hat{\xi}x \\ 
\frac{d\phi \left( t\right) }{dt}=\hat{\xi}%
\end{array}%
\right. ,  \label{floweq5}
\end{equation}%
with initial conditions $\left( x_{0},\phi _{0}\right) $. Without loss of
generality, assume $\left\vert \xi \right\vert =1.$

\begin{lemma}
\label{lemxphisol}The solution of (\ref{floweq5}) exists for all $t\in 
\mathbb{R}$ and, if $\xi =\pm x_{0}$, the solutions are given by 
\begin{subequations}%
\label{x0sol} 
\begin{eqnarray}
x\left( t\right) &=&x_{0}  \label{xtsol0} \\
\phi \left( t\right) &=&\phi _{0}\pm t.  \label{phisol0}
\end{eqnarray}%
\end{subequations}%
If $\xi \neq x_{0}$, then the solution is given by 
\begin{subequations}%
\label{x1sol}%
\begin{eqnarray}
x\left( t\right) &=&\xi \tanh \left( \sigma +t\right) +\tilde{x}_{0}\func{%
sech}\left( \sigma +t\right)  \label{xtsol} \\
\phi \left( t\right) &=&\phi _{0}+\ln \left( \frac{\cosh \left( t+\sigma
\right) }{\cosh \sigma }\right) ,  \label{phisol}
\end{eqnarray}%
\end{subequations}%
where $\tilde{x}_{0}=x_{0}\cosh \sigma -\xi \sinh \sigma $ and $\sigma $ is
such that $\hat{\xi}\left( 0\right) =\tanh \sigma .$
\end{lemma}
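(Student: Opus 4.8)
The plan is to decouple the nonlinear system (\ref{floweq5}) by tracking the single scalar quantity $\hat{\xi}(t)=\langle \xi ,x(t)\rangle$ (which equals $\xi^i x_i$ since the basis is orthonormal). First I would record that the flow stays on the sphere: since
\[
\frac{d}{dt}\left\vert x\right\vert ^{2}=2\langle x,\xi -\hat{\xi}x\rangle =2\hat{\xi}\left( 1-\left\vert x\right\vert ^{2}\right) ,
\]
the constraint $\left\vert x\right\vert ^{2}=1$ is preserved, so $x(t)$ indeed remains in $S^{m}$. Next, differentiating $\hat{\xi}$ along the flow and using $\left\vert \xi \right\vert =1$ gives the closed scalar equation
\[
\frac{d\hat{\xi}}{dt}=\langle \xi ,\xi -\hat{\xi}x\rangle =1-\hat{\xi}^{2}.
\]
This is the crucial step: it reduces the coupled problem to a single autonomous logistic (Riccati) equation whose solution is elementary, and it also controls $\dot{\phi}=\hat{\xi}$.

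The equation $\dot{\hat{\xi}}=1-\hat{\xi}^{2}$ has equilibria at $\hat{\xi}=\pm 1$, and by Cauchy--Schwarz $\hat{\xi}(0)=\langle \xi ,x_{0}\rangle =\pm 1$ exactly when $\xi =\pm x_{0}$. In those two cases $\hat{\xi}$ is constant ($\pm 1$), the $x$-equation reduces to $\dot{x}=\xi \mp x$, whose unique solution through $x_{0}$ is the equilibrium $x\equiv x_{0}$, while $\dot{\phi}=\pm 1$ integrates to $\phi _{0}\pm t$; this yields (\ref{x0sol}). When $\xi \neq \pm x_{0}$ we instead have $\left\vert \hat{\xi}(0)\right\vert <1$, so there is a unique $\sigma \in \mathbb{R}$ with $\tanh \sigma =\hat{\xi}(0)$, and the solution is $\hat{\xi}(t)=\tanh (t+\sigma )$, defined for all $t$.

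With $\hat{\xi}(t)=\tanh (t+\sigma )$ in hand, the first line of (\ref{floweq5}) becomes the linear inhomogeneous ODE $\dot{x}+\tanh (t+\sigma )\,x=\xi $, which I would solve with the integrating factor $\mu (t)=\exp \int_{0}^{t}\tanh (s+\sigma )\,ds=\cosh (t+\sigma )/\cosh \sigma $, so that $\frac{d}{dt}(\mu x)=\mu \xi $. Integrating $\int_{0}^{t}\cosh (s+\sigma )\,ds=\sinh (t+\sigma )-\sinh \sigma $ and dividing by $\mu (t)$ produces
\[
x(t)=\xi \tanh (t+\sigma )+\bigl( x_{0}\cosh \sigma -\xi \sinh \sigma \bigr) \func{sech}(t+\sigma ),
\]
which is exactly (\ref{xtsol}) with $\tilde{x}_{0}=x_{0}\cosh \sigma -\xi \sinh \sigma $, while $\phi (t)=\phi _{0}+\int_{0}^{t}\tanh (s+\sigma )\,ds=\phi _{0}+\ln \bigl( \cosh (t+\sigma )/\cosh \sigma \bigr) $ gives (\ref{phisol}). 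Existence for all $t$ is then immediate from these closed forms, since $\tanh $ and $\func{sech}$ are smooth and bounded on $\mathbb{R}$ (and is in any case guaranteed a priori by completeness of the trivialization).

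The one subtle point, which I expect to be the main obstacle, is \emph{self-consistency}: since the $x$-formula was obtained after freezing $\hat{\xi}$ to $\tanh (t+\sigma )$, I must verify that the resulting $x(t)$ genuinely reproduces $\langle \xi ,x(t)\rangle =\tanh (t+\sigma )$ and stays on $S^{m}$, so that it actually solves the original coupled system. Both follow from the two identities $\langle \xi ,\tilde{x}_{0}\rangle =\cosh \sigma \,\langle \xi ,x_{0}\rangle -\sinh \sigma =0$ and $\left\vert \tilde{x}_{0}\right\vert ^{2}=\cosh ^{2}\sigma -2\sinh \sigma \cosh \sigma \tanh \sigma +\sinh ^{2}\sigma =1$, both using $\langle \xi ,x_{0}\rangle =\tanh \sigma $ together with $\left\vert \xi \right\vert =\left\vert x_{0}\right\vert =1$. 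Given these, one gets $\langle \xi ,x(t)\rangle =\tanh (t+\sigma )$ and $\left\vert x(t)\right\vert ^{2}=\tanh ^{2}(t+\sigma )+\func{sech}^{2}(t+\sigma )=1$, confirming that the closed forms solve (\ref{floweq5}) and completing the proof.
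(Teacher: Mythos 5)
Your proposal is correct and follows essentially the same route as the paper's proof: reduce to the scalar Riccati equation $\dot{\hat{\xi}}=1-\hat{\xi}^{2}$, solve it via $\hat{\xi}(t)=\tanh(t+\sigma)$, and then integrate the linear $x$-equation with the integrating factor $\cosh(t+\sigma)$. Your additional checks (that the flow preserves $\lvert x\rvert^2=1$ and that the closed form is self-consistent with $\langle\xi,x(t)\rangle=\tanh(t+\sigma)$) are sound and in fact tighten a step the paper leaves implicit.
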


\begin{proof}
From (\ref{floweq5}), we see $x\left( t\right) $ is constant if and only if $%
\xi =\pm x_{0}.$In that case, $\hat{\xi}\left( t\right) =\pm 1$, and thus $%
\phi \left( t\right) =\phi _{0}\pm t.$

More generally, taking the inner product of the first equation in (\ref%
{floweq5}) with $\xi $, we obtain an equation for $\hat{\xi}\left( t\right)
: $%
\begin{equation}
\frac{d\hat{\xi}}{dt}=1-\hat{\xi}^{2}.  \label{xihateq}
\end{equation}%
Since $\hat{\xi}^{2}=1$ if and only if $\xi =\pm x_{0},$ suppose $\hat{\xi}%
\left( t\right) =\tanh \sigma \left( t\right) ,$ for some function $\sigma
\left( t\right) .$ Then, (\ref{xihateq}) immediately gives $\sigma \left(
t\right) =t+\sigma $, where $\sigma =\sigma \left( 0\right) ,$ that is, $%
\hat{\xi}\left( 0\right) =\tanh \sigma $.

Let us integrate $\hat{\xi}$, because this will be needed as an integrating
factor for the first equation in (\ref{floweq5}) and also to solve for $\phi
\left( t\right) $ in the second equation in (\ref{floweq5}):%
\begin{equation}
\int^{t}\hat{\xi}\left( \tau \right) d\tau =\ln \left( \cosh \left( \sigma
+t\right) \right) .  \label{intxihat}
\end{equation}%
Hence, we can write the integrating factor as%
\begin{equation*}
M\left( t\right) =\cosh \left( \sigma +t\right)
\end{equation*}%
and the general solution is 
\begin{equation}
x\left( t\right) =\xi \tanh \left( \sigma +t\right) +\tilde{x}_{0}\func{sech}%
\left( \sigma +t\right) ,  \label{xthyp}
\end{equation}%
where%
\begin{equation*}
\tilde{x}_{0}=x_{0}\cosh \sigma -\xi \sinh \sigma .
\end{equation*}%
We also obtain $\phi \left( t\right) $ (\ref{phisol}) by directly
integrating the second equation of (\ref{floweq5}):%
\begin{equation*}
\phi \left( t\right) =\phi _{0}+\ln \left( \frac{\cosh \left( t+\sigma
\right) }{\cosh \sigma }\right)
\end{equation*}%
As expected on a compact manifold, the integral curve exists for all values
of $t$.
\end{proof}

Overall, we see the geometric picture of how the integral curve evolves. For
the non-trivial case, suppose $\xi \neq \pm x_{0}$, then from (\ref{xthyp}),
we see that $x\left( t\right) $ always lies in the plane spanned by $x_{0}$
and $\xi $, so the curve is along the great circle of $S^{m}$ given by the
plane spanned by $x_{0}$ and $\xi $. If $\alpha \left( t\right) \ $is the
angle between $x\left( t\right) $ and $\xi ,$ then $\hat{\xi}\left( t\right)
=\cos \alpha \left( t\right) $. In particular, then 
\begin{eqnarray*}
\cos \alpha \left( t\right) &=&\tanh \left( \sigma +t\right) \\
\sin \alpha \left( t\right) &=&\func{sech}\left( \sigma +t\right) .
\end{eqnarray*}%
In particular, we see that $\cos \alpha \left( t\right) $ is a monotonic
function$,$ with $\cos \alpha \left( t\right) \longrightarrow +1$ as $%
t\longrightarrow +\infty $ and $\cos \alpha \left( t\right) \longrightarrow
-1$ as $t\longrightarrow -\infty $. Moreover, $\sin \alpha \left( t\right) $
is always positive. Therefore, $\alpha \left( t\right) \in \left( 0,\pi
\right) $, with $\alpha \left( t\right) \longrightarrow 0$ as $%
t\longrightarrow +\infty $ and $\alpha \left( t\right) \longrightarrow \pi $
as $t\longrightarrow -\infty $. The equation (\ref{xtsol}) can be rewritten
as 
\begin{equation}
x\left( t\right) =\xi \cos \alpha \left( t\right) +\tilde{x}_{0}\sin \alpha
\left( t\right) .  \label{xtcossin}
\end{equation}%
This shows that in the adapted $\xi $-$\tilde{x}_{0}$ plane, $x\left(
t\right) $ stays in the upper half-plane, moving clockwise as $t$ increases.
In particular, as $t\longrightarrow \pm \infty $, $x\left( t\right)
\longrightarrow \pm \xi $.

\begin{example}
Since $a^{\left( s\right) }$ in this case is non-trivial (\ref{spherecircas}%
), from Theorem \ref{thmBrackAssoc} we see that $\circ _{s}$ is necessarily
non-associative. To illustrate this product $\circ _{s}$ on $\mathfrak{l}$ (%
\ref{stildeprod}) more explicitly, let $s=\left( x,\phi \right) $ and
consider elements unit length $\xi ,\eta \in \mathfrak{l}$, such that they
are orthogonal to each other and to $x$. The more general case is
computationally a bit more involved. Consider first $\left( x_{1}\left(
t\right) ,\phi _{1}\left( t\right) \right) =\left( t\xi \right) \cdot s.$
From (\ref{x1sol}), noting that $\xi \cdot x=0$ implies $\sigma =0,$ we have 
\begin{eqnarray}
x_{1}\left( t\right) &=&\xi \tanh t+x\func{sech}t \\
\phi _{1}\left( t\right) &=&\phi _{0}+\ln \left( \cosh t\right) .
\end{eqnarray}%
Next, consider $\left( x_{2}\left( \tau \right) ,\phi _{2}\left( \tau
\right) \right) =\left( \tau \eta \right) \cdot \left( \left( t\xi \right)
\cdot s\right) .$ By assumption, the initial value $x_{2}\left( 0\right)
=\xi \tanh t+x\func{sech}t$ is orthogonal to $\eta $, so $\sigma $ is again $%
0$. Thus, 
\begin{eqnarray*}
x_{2}\left( \tau \right) &=&\eta \tanh \tau +\xi \tanh t\func{sech}\tau +x%
\func{sech}t\func{sech}\tau \\
\phi _{2}\left( \tau \right) &=&\phi _{0}+\ln \left( \cosh t\cosh \tau
\right) .
\end{eqnarray*}%
To find $\left( \tau \eta \right) \circ _{s}\left( t\xi \right) =\left( \tau
\eta \right) \cdot \left( \left( t\xi \right) \cdot s\right) /s$, we need to
find a positive $t_{2}\in \mathbb{R}$ and $\gamma \in \mathfrak{l}$ of unit
length, such that 
\begin{eqnarray*}
x_{2}\left( \tau \right) &=&\gamma \tanh \left( \sigma _{2}+t_{2}\right)
+\left( x\cosh \sigma _{2}-\gamma \sinh \sigma _{2}\right) \func{sech}\left(
\sigma _{2}+t_{2}\right) \\
\phi _{2}\left( \tau \right) &=&\phi _{0}+\ln \left( \frac{\cosh \left(
t_{2}+\sigma _{2}\right) }{\cosh \sigma _{2}}\right) +2\pi k\text{,}
\end{eqnarray*}%
where $\tanh \sigma _{2}=\gamma \cdot x$ and $k$ is an integer. The freedom
to add an integer multiple of $2\pi k$ stems from the fact that $\phi $ is a
coordinate on the circle. We know that $\gamma $ is a linear combination of $%
\xi ,\eta ,x,$ so let 
\begin{equation}
\gamma =a\xi +b\eta +\left( \tanh \sigma _{2}\right) x.  \label{gamabc}
\end{equation}%
Then, taking inner products of $x_{2}$ with $\xi ,\eta ,x$, and also
comparing $\phi _{2}$, we find the following equations 
\begin{subequations}
\begin{eqnarray}
\tanh t\func{sech}\tau &=&a\left( \tanh \left( t_{2}+\sigma _{2}\right)
-\sinh \sigma _{2}\func{sech}\left( t_{2}+\sigma _{2}\right) \right) \\
\tanh \tau &=&b\left( \tanh \left( t_{2}+\sigma _{2}\right) -\sinh \sigma
_{2}\func{sech}\left( t_{2}+\sigma _{2}\right) \right) \\
\func{sech}t\func{sech}\tau &=&\tanh \sigma _{2}\tanh \left( t_{2}+\sigma
_{2}\right)  \label{exeq3} \\
&&+\func{sech}\sigma _{2}\func{sech}\left( t_{2}+\sigma _{2}\right)  \notag
\\
e^{2\pi k}\cosh t\cosh \tau &=&\frac{\cosh \left( t_{2}+\sigma _{2}\right) }{%
\cosh \sigma _{2}}  \label{exeq4} \\
1 &=&a^{2}+b^{2}+\tanh ^{2}\sigma _{2}.
\end{eqnarray}%
\end{subequations}%
The last equation comes from the fact that $\gamma $ is unit length. However
if we require that $t_{2}\longrightarrow 0$ as $t,\tau \longrightarrow 0$,
which makes sense in order for the product $\circ _{s}$to be defined in the
neighborhood of $0$, (\ref{exeq4}) shows that $k=0$. From (\ref{exeq3}) and (%
\ref{exeq4}) with $k=0$, we find%
\begin{equation*}
\tanh \sigma _{2}\tanh \left( t_{2}+\sigma _{2}\right) +\left( \func{sech}%
\sigma _{2}-\cosh \sigma _{2}\right) \func{sech}\left( t_{2}+\sigma
_{2}\right) =0
\end{equation*}%
and thus 
\begin{equation*}
\tanh \sigma _{2}\left( \sinh \left( t_{2}+\sigma _{2}\right) -\sinh \sigma
_{2}\right) =0.
\end{equation*}%
Since $t_{2}>0$, we find that $\tanh \sigma _{2}=0$, and hence $\sigma
_{2}=0.$ This immediately gives us 
\begin{eqnarray*}
\cosh t_{2} &=&\cosh t\cosh \tau \\
a &=&\frac{\tanh t\func{sech}\tau }{\tanh t_{2}} \\
b &=&\frac{\tanh \tau }{\tanh t_{2}}.
\end{eqnarray*}%
Thus, we conclude that for an orthogonal triple $\eta ,\xi ,x$, we get 
\begin{equation*}
\left( \tau \eta \right) \circ _{s}\left( t\xi \right) =t_{2}\left( a\xi
+b\eta \right) ,
\end{equation*}%
with $t_{2}$, $a,$ and $b$ as given above.
\end{example}

Let us now consider the isomorphisms of $\mathbb{L}$. Suppose $h\in \Psi
\left( \mathbb{L}\right) $ and $h^{\prime }\in \Psi ^{\prime }\left( \mathbb{%
L}\right) $ is the map of $\mathfrak{l}.$ Recall from Lemma \ref%
{lemPseudoProd} that for $\xi \in \mathfrak{l}$ and $s\in \mathbb{L}$, $%
h\left( \xi \cdot s\right) =h^{\prime }\left( \xi \right) \cdot h\left(
s\right) .$

Given $s=\left( x,\phi \right) \in \mathbb{L}$, let the corresponding
components of $h\left( s\right) $ be $h\left( s\right) _{x}$ and $h\left(
s\right) _{\phi }$. From Theorem \ref{thmhbrack}, we then know that for any $%
\xi ,\eta \in \mathfrak{l}$ and $s=\left( x,\phi \right) \in \mathbb{L}$ 
\begin{equation*}
h^{\prime }\left( \left[ \xi ,\eta \right] ^{\left( s\right) }\right) =\left[
h^{\prime }\left( \xi \right) ,h^{\prime }\left( \eta \right) \right]
^{h\left( s\right) }.
\end{equation*}%
However, from (\ref{spherebrack2}), 
\begin{equation}
h^{\prime }\left( \left[ \xi ,\eta \right] ^{\left( s\right) }\right)
=h^{\prime }\left( \xi \right) \left\langle \eta ,x\right\rangle -h^{\prime
}\left( \eta \right) \left\langle \xi ,x\right\rangle
\end{equation}%
and 
\begin{equation}
\left[ h^{\prime }\left( \xi \right) ,h^{\prime }\left( \eta \right) \right]
^{h\left( s\right) }=h^{\prime }\left( \xi \right) \left\langle h^{\prime
}\left( \eta \right) ,h\left( s\right) _{x}\right\rangle -h^{\prime }\left(
\eta \right) \left\langle h^{\prime }\left( \xi \right) ,h\left( s\right)
_{x}\right\rangle .
\end{equation}%
Hence, in particular, for any $\eta \in \mathfrak{l}$ and $x\in S^{n},$ 
\begin{equation}
\left\langle \eta ,x\right\rangle =\left\langle h^{\prime }\left( \eta
\right) ,h\left( s\right) _{x}\right\rangle .  \label{etax}
\end{equation}

Suppose now $\xi $ is a unit vector in $\mathfrak{l}$. Noting that $h\left(
t\xi \cdot s\right) =\left( th^{\prime }\left( \xi \right) \right) \cdot
h\left( s\right) $, from (\ref{floweq5}) we see $h\left( s\left( t\right)
\right) =h\left( t\xi \cdot s\right) $ satisfies the following equations 
\begin{equation}
\left\{ 
\begin{array}{c}
\frac{dh\left( s\left( t\right) \right) _{x}}{dt}=h^{\prime }\left( \xi
\right) -\hat{\xi}h\left( s\left( t\right) \right) _{x} \\ 
\frac{dh\left( s\left( t\right) \right) _{\phi }}{dt}=\hat{\xi}%
\end{array}%
\right. .  \label{hfloweq}
\end{equation}%
From (\ref{etax}), we see that $\left\langle th^{\prime }\left( \xi \right)
,h\left( s\left( t\right) \right) _{x}\right\rangle =\left\langle t\xi
,x\left( t\right) \right\rangle $, and hence $\hat{\xi}$ is unchanged from (%
\ref{floweq5}). Taking inner product of the first equation of (\ref{hfloweq}%
) with $h^{\prime }\left( \xi \right) $ yields 
\begin{equation*}
\frac{d\hat{\xi}}{dt}=\left\vert h^{\prime }\left( \xi \right) \right\vert
^{2}-\hat{\xi}^{2}.
\end{equation*}%
However, comparing with (\ref{xihateq}), we see that $\left\vert h^{\prime
}\left( \xi \right) \right\vert =1.$ Therefore, $h^{\prime }\in SO\left(
n+1\right) $, and hence from (\ref{etax}) we see that $h\left( s\left(
t\right) \right) _{x}=h^{\prime }\left( x\left( t\right) \right) .$

We now need to understand how $h$ transforms the $\phi $ component of $s$.
From (\ref{x0sol}) and (\ref{x1sol}), for $\xi \in \mathfrak{l}$, let us
denote the $x$ and $\phi $ components of $\xi \cdot s\ $by $\xi \cdot x\ $%
and $\phi +\phi _{\xi }$, respectively. Define $\tilde{h}_{x}\left( \phi
\right) =h\left( s\right) _{\phi }$ to be the transformation of the $\phi $
component, so that $h\left( s\right) =h\left( x,\phi \right) =\left(
h^{\prime }\left( x\right) ,\tilde{h}_{x}\left( \phi \right) \right) .$ In
particular, 
\begin{eqnarray*}
h\left( \xi \cdot s\right) &=&h\left( \xi \cdot x,\phi +\phi _{\xi }\right)
\\
&=&\left( h^{\prime }\left( \xi \cdot x\right) ,\tilde{h}_{\xi \cdot
x}\left( \phi +\phi _{\xi }\right) \right) .
\end{eqnarray*}%
On the other hand, from the definition of $h$, 
\begin{eqnarray*}
h\left( \xi \cdot s\right) &=&h^{\prime }\left( \xi \right) \cdot h\left(
s\right) \\
&=&h^{\prime }\left( \xi \right) \cdot \left( h^{\prime }\left( x\right) ,%
\tilde{h}_{x}\left( \phi \right) \right) \\
&=&\left( h^{\prime }\left( \xi \right) \cdot h^{\prime }\left( x\right) ,%
\tilde{h}_{x}\left( \phi \right) +\phi _{h^{\prime }\left( \xi \right)
}\right) .
\end{eqnarray*}%
We already know that $h^{\prime }\left( \xi \cdot x\right) =h^{\prime
}\left( \xi \right) \cdot h^{\prime }\left( x\right) ,$ and moreover, since $%
\left\langle \xi ,x\right\rangle =\left\langle h^{\prime }\left( \xi \right)
,h^{\prime }\left( x\right) \right\rangle $, $\phi _{h^{\prime }\left( \xi
\right) }=\phi _{\xi }$. Thus we see that 
\begin{equation}
\tilde{h}_{\xi \cdot x}\left( \phi +\phi _{\xi }\right) =\tilde{h}_{x}\left(
\phi \right) +\phi _{\xi }.  \label{htildphi}
\end{equation}

Now from (\ref{x0sol}), we see that taking $\xi =x$, $\left( tx\right) \cdot
s=\left( x,\phi +t\right) $, so $h\left( \left( tx\right) \cdot s\right)
=\left( h^{\prime }\left( x\right) ,\tilde{h}_{x}\left( \phi +t\right)
\right) .$ However, from the definition of $h$, 
\begin{eqnarray*}
h\left( \left( tx\right) \cdot s\right) &=&th^{\prime }\left( x\right) \cdot
h\left( s\right) \\
&=&\left( h^{\prime }\left( x\right) ,\tilde{h}_{x}\left( \phi \right)
+t\right) .
\end{eqnarray*}%
Hence, we see that for any $x,\xi ,\ $and $t$, $\tilde{h}_{x}\left( \phi
+t\right) =\tilde{h}_{x}\left( \phi \right) +t.$ This shows that $\tilde{h}%
_{x}\left( \phi \right) $ $=\phi +c\left( x\right) .$ From (\ref{htildphi}),
we then see that 
\begin{equation*}
\phi +\phi _{\xi }+c\left( \xi \cdot x\right) =\phi +c\left( x\right) +\phi
_{\xi },
\end{equation*}%
which shows that $c\left( x\right) $ is independent of $x.$ Hence, the
transformation of $\phi $ is just a transformation of $S^{1}$ by an action
of $U\left( 1\right) $. Overall, we then see that $\Psi \left( \mathbb{L}%
\right) \cong SO\left( n+1\right) \times U\left( 1\right) $ and $\Psi
^{\prime }\left( \mathbb{L}\right) \cong SO\left( n+1\right) .$

Consider now more generally, $\mathbb{L=}S^{m}\times N$ where $N$ is any $n$%
-dimensional parallelizable manifold. Suppose $\left\{
T_{1},...,T_{n}\right\} $ is a frame of vector fields on $N$ that satisfy
the following Lie bracket relations:%
\begin{equation}
\left[ T_{A},T_{B}\right] =D_{AB}^{\ \ \text{\ }C}T_{C},  \label{TTbrack}
\end{equation}%
where the $D_{AB}^{\ \ \text{\ }C}$ for $A,B,C=1,...,n$ are functions on $N$%
. The idea is to take a non-trivial vector field $T$ on $N$ and add the
trivial rank 1 bundle generated by it to $TS^{m}$. The resulting rank $%
\left( m+1\right) $-bundle then be trivialized in the same way as in the
previous example of $S^{m}\times S^{1}$ \cite{BruniSpheres}. Without loss of
generality, let $T=T_{1}$. As before, consider $S^{m}\subset \mathbb{R}^{m+1}
$ and for $i=1,...,n+1$, define%
\begin{equation}
f_{i}=M_{i}+x_{i}T,  \label{biT1}
\end{equation}%
where $M_{i}$ is given by (\ref{Mi}). More explicitly, the $f_{i}$ are given
by 
\begin{equation}
f_{i}=\partial _{x_{i}}-x_{i}\sum_{j=1}^{m+1}x_{j}\partial _{x_{j}}+x_{i}T,
\end{equation}%
Then, the set $\left\{ f_{1},...,f_{n+1},T_{2},...,T_{n}\right\} $ is global
frame on $S^{m}\times N.$ From (\ref{biT1}), we see that 
\begin{equation}
T=x^{i}f_{i.}  \label{T1bi}
\end{equation}%
The basis elements $\left\{ f_{1},...,f_{n+1}\right\} $ satisfy the
following bracket relation 
\begin{equation}
\left[ f_{i},f_{j}\right] =x_{i}f_{j}-x_{j}f_{i}.  \label{bbbrack}
\end{equation}%
Also, for $A,B=2,...,n$, 
\begin{equation*}
\left[ f_{i},T_{A}\right] =\left[ x_{i}T_{1},T_{A}\right] =x_{i}D_{1A}^{\ \
1}T_{1}+x_{i}D_{1A}^{\ \ \ B}T_{B}\ 
\end{equation*}

Now let $\mathfrak{l}$ be a $\left( m+n\right) $-dimensional real vector
space, with an inner product $\left\langle \cdot ,\cdot \right\rangle $ and
an orthonormal basis $\left\{ e_{i},e_{A}^{\prime }\right\} $, where $%
i=1,...,m+1\ $and $A=2,...,n$. Then, for each $s=\left( x,y\right) \in 
\mathbb{L},$ define%
\begin{equation}
\begin{array}{c}
\rho _{s}:\mathfrak{l}\longrightarrow T_{s}\mathbb{L} \\ 
e_{i}\mapsto \left. f_{i}\right\vert _{s} \\ 
e_{A}^{\prime }\mapsto \left. T_{A}\right\vert _{s}%
\end{array}%
,
\end{equation}%
Since $T_{1}=x^{i}f_{i}$, we see that $\rho _{s}^{-1}\left( T_{1}\right)
=x^{i}e_{i}.$ Hence define $e_{1}^{\prime }=x^{i}e_{i}$. As before, using (%
\ref{lbrack}) and (\ref{bbbrack}), we define the bracket on $\mathfrak{l}$
between basis elements $e_{i}$ and $e_{j}$ as: 
\begin{eqnarray}
\left[ e_{i},e_{j}\right] ^{\left( s\right) } &=&-\rho _{s}^{-1}\left(
\left. \left[ f_{i},f_{j}\right] \right\vert _{s}\right)   \notag \\
&=&e_{i}x_{j}-e_{j}x_{i}.
\end{eqnarray}%
Similarly, for $A=2,...,n$, and using (\ref{TTbrack}), we see that 
\begin{eqnarray}
\left[ e_{i},e_{A}^{\prime }\right] ^{\left( s\right) } &=&-\rho
_{s}^{-1}\left( \left. \left[ b_{i},T_{A}\right] \right\vert _{s}\right)  
\notag \\
&=&-x_{i}\rho _{s}^{-1}\left( \left[ T_{1},T_{A}\right] \right)   \notag \\
&=&-x_{i}D_{1A}^{\ \ \ \tilde{C}}f_{\tilde{C}}  \label{brackeifA}
\end{eqnarray}%
for $\tilde{C}=1,...,n$. Moreover, 
\begin{eqnarray}
\left[ e_{i},e_{1}^{\prime }\right] ^{\left( s\right) } &=&x^{j}\left[
e_{i},e_{j}\right] ^{\left( s\right) }  \notag \\
&=&e_{i}-x_{i}e_{1}^{\prime }  \label{brackeqf1} \\
\left[ e_{1}^{\prime },e_{A}^{\prime }\right] ^{\left( s\right) } &=&x^{i}%
\left[ e_{i},e_{A}^{\prime }\right] ^{\left( s\right) }  \notag \\
&=&-D_{1A}^{\ \ \ \tilde{C}}e_{\tilde{C}}^{\prime }
\end{eqnarray}%
Now, for $B=2,...,n$, 
\begin{eqnarray}
\left[ e_{A}^{\prime },e_{B}^{\prime }\right] ^{\left( s\right) } &=&-\rho
_{s}^{-1}\left( \left. \left[ T_{A},T_{B}\right] \right\vert _{s}\right)  
\notag \\
&=&-D_{AB}^{\ \ \ \tilde{C}}e_{\tilde{C}}^{\prime }.
\end{eqnarray}%
For $\tilde{A},\tilde{B},\tilde{C}=1,..,n$, we can then summarize 
\begin{equation}
\left[ e_{\tilde{A}}^{\prime },e_{\tilde{B}}^{\prime }\right] ^{\left(
s\right) }=-D_{\tilde{A}\tilde{B}}^{\ \ \ \tilde{C}}e_{\tilde{C}}^{\prime }.
\end{equation}%
Overall, we see that for $\left[ e_{i},e_{j}\right] ^{\left( s\right) }$ we
obtain the same structure constants as in the case of $S^{m}\times S^{1}$,
for $\left[ e_{\tilde{A}}^{\prime },e_{\tilde{B}}^{\prime }\right] ^{\left(
s\right) },$ the structure constants are defined by the structure constants
on the parallelizable space $N$, and only the mixed brackets (\ref{brackeifA}%
) and (\ref{brackeqf1}) are new. We can combine these both in a single
expression for $\tilde{A},\tilde{C}=1,...,n$ as 
\begin{equation}
\left[ e_{i},e_{\tilde{A}}^{\prime }\right] ^{\left( s\right) }=-x_{i}D_{1%
\tilde{A}}^{\ \ \ \tilde{C}}e_{\tilde{C}}^{\prime }+\delta _{1\tilde{A}%
}\left( e_{i}-x_{i}e_{1}^{\prime }\right) .  \label{brackeifAt}
\end{equation}

If we consider now the Jacobi identity for $\left[ \cdot ,\cdot \right]
^{\left( s\right) }$, we find from Lemma \ref{lemSnS1Jac}, that the Jacobi
identity is satisfied for $e_{i},e_{j},e_{k}.$ Similarly, for $e_{\tilde{A}%
}^{\prime },e_{\tilde{B}}^{\prime },e_{\tilde{C}}^{\prime },$ the Jacobi
identity is given by (\ref{jacid2}). Now consider a mix of $e_{i}$ and $%
e_{A}^{\prime }$: 
\begin{eqnarray*}
\left[ e_{i},\left[ e_{j},e_{A}^{\prime }\right] ^{\left( s\right) }\right]
^{\left( s\right) } &=&-\left[ e_{i},x_{j}D_{1A}^{\ \ \ \tilde{C}}e_{\tilde{C%
}}^{\prime }\right] ^{\left( s\right) } \\
&=&x_{i}x_{j}D_{1A}^{\ \ \ \tilde{C}}D_{1\tilde{C}}^{\ \ \ \tilde{B}}e_{%
\tilde{B}}^{\prime }+x_{i}x_{j}e_{1}^{\prime }D_{1A}^{\ \ \
1}e_{i}-x_{j}D_{1A}^{\ \ \ 1}e_{i} \\
\left[ e_{j},\left[ e_{A}^{\prime },e_{i}\right] ^{\left( s\right) }\right]
^{\left( s\right) } &=&-\left[ e_{j},\left[ e_{i},e_{A}^{\prime }\right]
^{\left( s\right) }\right] ^{\left( s\right) }=-x_{i}x_{j}D_{1A}^{\ \ \ 
\tilde{C}}D_{1\tilde{C}}^{\ \ \ \tilde{B}}e_{\tilde{B}}^{\prime
}-x_{i}x_{j}e_{1}^{\prime }D_{1A}^{\ \ \ 1}e_{i}+x_{i}D_{1A}^{\ \ \ 1}e_{j}
\\
\left[ e_{A}^{\prime },\left[ e_{i},e_{j}\right] ^{\left( s\right) }\right]
^{\left( s\right) } &=&\left[ e_{A}^{\prime },e_{i}x_{j}-e_{j}x_{i}\right]
^{\left( s\right) } \\
&=&-x_{i}x_{j}D_{1A}^{\ \ \ \tilde{C}}e_{\tilde{C}}^{\prime
}+x_{i}x_{j}D_{1A}^{\ \ \ \tilde{C}}e_{\tilde{C}}^{\prime }=0
\end{eqnarray*}%
Hence, 
\begin{eqnarray*}
\left[ e_{i},\left[ e_{j},e_{A}^{\prime }\right] ^{\left( s\right) }\right]
^{\left( s\right) }+\left[ e_{j},\left[ e_{A}^{\prime },e_{i}\right]
^{\left( s\right) }\right] ^{\left( s\right) }+\left[ e_{A}^{\prime },\left[
e_{i},e_{j}\right] ^{\left( s\right) }\right] ^{\left( s\right) }
&=&D_{1A}^{\ \ \ 1}\left( x_{i}e_{j}-x_{j}e_{i}\right)  \\
&=&-D_{1A}^{\ \ \ 1}\left[ e_{i},e_{j}\right] ^{\left( s\right) }
\end{eqnarray*}%
We see that whenever $D_{1A}^{\ \ \ 1}\neq 0$, the Jacobi identity is not
satisfied, and the bracket $\left[ \cdot ,\cdot \right] ^{\left( s\right) }$
does not define a Lie algebra. In particular, even if on $N$, the bracket
algebra at $y\in N$ is a Lie algebra, if for some $A$, $D_{1A}^{\ \ \
1}\left( y\right) \neq 0$, then the bracket algebra at $s\in \mathbb{L}$ is
not a Lie algebra, and from (\ref{jacid2}), $\left( \mathfrak{l},a^{\left(
s\right) }\right) $ is not a Lie triple system.

As a particular example, consider $S^{m}\times S^{n}\times S^{1}.$ We know $%
S^{n}\times S^{1}$ is parallelizable, and using the same parallelization on $%
S^{n}\times S^{1}$ as we did earlier on $S^{m}\times S^{1}$, but labelling
the basis elements on $T\left( S^{m}\times S^{1}\right) $ as $T_{A}$ for $%
A=1,...,n+1$, from (\ref{spherebrack}), we find that in this case, for $%
s=\left( x,y,\phi \right) $, with $x$ being the coordinates in $\mathbb{R}%
^{m+1}\supset S^{m}$, $y$ being the coordinates in $\mathbb{R}^{n+1}\supset
S^{n}$, and $\phi $ being the coordinate on $\phi $, 
\begin{equation*}
\left. \left[ T_{A},T_{B}\right] \right\vert _{y}=y_{A}T_{B}-y_{B}T_{A}.
\end{equation*}%
in this case, for $A\neq 1$, 
\begin{equation*}
D_{1A}^{\ \ 1}\left( y\right) =-y_{A}.
\end{equation*}%
Thus, as long as $y\neq \left( 1,0,...,0\right) ,$ this will be non-zero for
some $A$, and the algebra of $\left[ \cdot ,\cdot \right] ^{\left( s\right)
} $ for $s=\left( x,y\right) $ will not satisfy the Jacobi identity.

\appendix

\section{Appendix}

\label{secApp}\setcounter{equation}{0}

\begin{proof}[Proof of Theorem \protect\ref{thmTriple}]
To complete the proof of Theorem \ref{thmTriple}, we need to show that given 
\begin{equation}
a^{\left( s\right) }\left( \xi ,\eta ,\gamma \right) =\gamma \left(
\left\langle \xi ,\eta \right\rangle +\left\langle x,\xi \right\rangle
\left\langle x,\eta \right\rangle \right) -\eta \left( \left\langle \xi
,\gamma \right\rangle +\left\langle x,\xi \right\rangle \left\langle
x,\gamma \right\rangle \right) ,
\end{equation}%
for any $\xi ,\eta ,\gamma ,u,v\in \mathfrak{l},$ 
\begin{eqnarray}
a_{\xi ,\eta }^{\left( s\right) }\left( a^{\left( s\right) }\left(
u,v,\gamma \right) \right) &=&a^{\left( s\right) }\left( a^{\left( s\right)
}\left( u,\xi ,\eta \right) ,v,\gamma \right) \\
&&+a^{\left( s\right) }\left( u,a^{\left( s\right) }\left( v,\xi ,\eta
\right) ,\gamma \right)  \notag \\
&&+a^{\left( s\right) }\left( u,v,a^{\left( s\right) }\left( \gamma ,\xi
,\eta \right) \right) .  \notag
\end{eqnarray}%
Now, let 
\begin{equation*}
A=a^{\left( s\right) }\left( u,v,\gamma \right) =\gamma \left( \left\langle
u,v\right\rangle +\left\langle x,u\right\rangle \left\langle
x,v\right\rangle \right) -v\left( \left\langle u,\gamma \right\rangle
+\left\langle x,u\right\rangle \left\langle x,\gamma \right\rangle \right)
\end{equation*}%
, 
\begin{equation*}
a^{\left( s\right) }\left( A,\xi ,\eta \right) =\eta \left( \left\langle
A,\xi \right\rangle +\left\langle x,A\right\rangle \left\langle x,\xi
\right\rangle \right) -\xi \left( \left\langle A,\eta \right\rangle
+\left\langle x,A\right\rangle \left\langle x,\eta \right\rangle \right) 
\text{,}
\end{equation*}%
but 
\begin{eqnarray*}
\left\langle A,\xi \right\rangle &=&\left\langle \gamma ,\xi \right\rangle
\left\langle u,v\right\rangle +\left\langle \gamma ,\xi \right\rangle
\left\langle x,u\right\rangle \left\langle x,v\right\rangle -\left\langle
v,\xi \right\rangle \left\langle u,\gamma \right\rangle -\left\langle v,\xi
\right\rangle \left\langle x,u\right\rangle \left\langle x,\gamma
\right\rangle \\
\left\langle A,\eta \right\rangle &=&\left\langle \gamma ,\eta \right\rangle
\left\langle u,v\right\rangle +\left\langle \gamma ,\eta \right\rangle
\left\langle x,u\right\rangle \left\langle x,v\right\rangle -\left\langle
v,\eta \right\rangle \left\langle u,\gamma \right\rangle -\left\langle
v,\eta \right\rangle \left\langle x,u\right\rangle \left\langle x,\gamma
\right\rangle \\
\left\langle A,x\right\rangle &=&\left\langle \gamma ,x\right\rangle
\left\langle u,v\right\rangle +\left\langle \gamma ,x\right\rangle
\left\langle x,u\right\rangle \left\langle x,v\right\rangle -\left\langle
v,x\right\rangle \left\langle u,\gamma \right\rangle -\left\langle
v,x\right\rangle \left\langle x,u\right\rangle \left\langle x,\gamma
\right\rangle \\
&=&\left\langle \gamma ,x\right\rangle \left\langle u,v\right\rangle
-\left\langle v,x\right\rangle \left\langle u,\gamma \right\rangle .
\end{eqnarray*}%
Thus, 
\begin{eqnarray*}
a^{\left( s\right) }\left( a^{\left( s\right) }\left( u,v,\gamma \right)
,\xi ,\eta \right) &=&\eta \left( \left\langle \gamma ,\xi \right\rangle
\left\langle u,v\right\rangle +\left\langle \gamma ,\xi \right\rangle
\left\langle x,u\right\rangle \left\langle x,v\right\rangle -\left\langle
v,\xi \right\rangle \left\langle u,\gamma \right\rangle -\left\langle v,\xi
\right\rangle \left\langle x,u\right\rangle \left\langle x,\gamma
\right\rangle \right) \\
&&-\xi \left( \left\langle \gamma ,\eta \right\rangle \left\langle
u,v\right\rangle +\left\langle \gamma ,\eta \right\rangle \left\langle
x,u\right\rangle \left\langle x,v\right\rangle -\left\langle v,\eta
\right\rangle \left\langle u,\gamma \right\rangle -\left\langle v,\eta
\right\rangle \left\langle x,u\right\rangle \left\langle x,\gamma
\right\rangle \right) \\
&&+\left( \eta \left\langle x,\xi \right\rangle -\xi \left\langle x,\eta
\right\rangle \right) \left( \left\langle \gamma ,x\right\rangle
\left\langle u,v\right\rangle -\left\langle v,x\right\rangle \left\langle
u,\gamma \right\rangle \right)
\end{eqnarray*}%
Similarly, 
\begin{eqnarray*}
a^{\left( s\right) }\left( a^{\left( s\right) }\left( u,\xi ,\eta \right)
,v,\gamma \right) &=&\gamma \left( \left\langle \eta ,v\right\rangle
\left\langle u,\xi \right\rangle +\left\langle \eta ,v\right\rangle
\left\langle x,u\right\rangle \left\langle x,\xi \right\rangle -\left\langle
v,\xi \right\rangle \left\langle u,\eta \right\rangle -\left\langle v,\xi
\right\rangle \left\langle x,u\right\rangle \left\langle x,\eta
\right\rangle \right) \\
&&-v\left( \left\langle \gamma ,\eta \right\rangle \left\langle u,\xi
\right\rangle +\left\langle \gamma ,\eta \right\rangle \left\langle
x,u\right\rangle \left\langle x,\xi \right\rangle -\left\langle \xi ,\gamma
\right\rangle \left\langle u,\eta \right\rangle -\left\langle \xi ,\gamma
\right\rangle \left\langle x,u\right\rangle \left\langle x,\eta
\right\rangle \right) \\
&&+\left( \gamma \left\langle x,v\right\rangle -v\left\langle x,\gamma
\right\rangle \right) \left( \left\langle \eta ,x\right\rangle \left\langle
u,\xi \right\rangle -\left\langle \xi ,x\right\rangle \left\langle u,\eta
\right\rangle \right)
\end{eqnarray*}%
Now let 
\begin{eqnarray*}
B &=&a^{\left( s\right) }\left( v,\xi ,\eta \right) =\eta \left(
\left\langle v,\xi \right\rangle +\left\langle x,v\right\rangle \left\langle
x,\xi \right\rangle \right) -\xi \left( \left\langle v,\eta \right\rangle
+\left\langle x,v\right\rangle \left\langle x,\eta \right\rangle \right) \\
\left\langle B,u\right\rangle &=&\left\langle \eta ,u\right\rangle
\left\langle v,\xi \right\rangle +\left\langle \eta ,u\right\rangle
\left\langle x,v\right\rangle \left\langle x,\xi \right\rangle -\left\langle
\xi ,u\right\rangle \left\langle v,\eta \right\rangle -\left\langle \xi
,u\right\rangle \left\langle x,v\right\rangle \left\langle x,\eta
\right\rangle \\
\left\langle B,x\right\rangle &=&\left\langle \eta ,x\right\rangle
\left\langle v,\xi \right\rangle -\left\langle \xi ,x\right\rangle
\left\langle v,\eta \right\rangle
\end{eqnarray*}%
and 
\begin{eqnarray*}
a^{\left( s\right) }\left( u,a^{\left( s\right) }\left( v,\xi ,\eta \right)
,\gamma \right) &=&\gamma \left( \left\langle u,B\right\rangle +\left\langle
x,u\right\rangle \left\langle x,B\right\rangle \right) -B\left( \left\langle
u,\gamma \right\rangle +\left\langle x,u\right\rangle \left\langle x,\gamma
\right\rangle \right) \\
&=&\gamma \left( \left\langle \eta ,u\right\rangle \left\langle v,\xi
\right\rangle +\left\langle \eta ,u\right\rangle \left\langle
x,v\right\rangle \left\langle x,\xi \right\rangle -\left\langle \xi
,u\right\rangle \left\langle v,\eta \right\rangle -\left\langle \xi
,u\right\rangle \left\langle x,v\right\rangle \left\langle x,\eta
\right\rangle \right) \\
&&+\gamma \left\langle x,u\right\rangle \left( \left\langle \eta
,x\right\rangle \left\langle v,\xi \right\rangle -\left\langle \xi
,x\right\rangle \left\langle v,\eta \right\rangle \right) \\
&&-\eta \left( \left\langle v,\xi \right\rangle +\left\langle
x,v\right\rangle \left\langle x,\xi \right\rangle \right) \left(
\left\langle u,\gamma \right\rangle +\left\langle x,u\right\rangle
\left\langle x,\gamma \right\rangle \right) \\
&&+\xi \left( \left\langle v,\eta \right\rangle +\left\langle
x,v\right\rangle \left\langle x,\eta \right\rangle \right) \left(
\left\langle u,\gamma \right\rangle +\left\langle x,u\right\rangle
\left\langle x,\gamma \right\rangle \right)
\end{eqnarray*}%
Similarly, 
\begin{eqnarray*}
a^{\left( s\right) }\left( u,v,a^{\left( s\right) }\left( \gamma ,\xi ,\eta
\right) \right) &=&-a^{\left( s\right) }\left( u,a^{\left( s\right) }\left(
\gamma ,\xi ,\eta \right) ,v\right) \\
&=&-v\left( \left\langle \eta ,u\right\rangle \left\langle \gamma ,\xi
\right\rangle +\left\langle \eta ,u\right\rangle \left\langle x,\gamma
\right\rangle \left\langle x,\xi \right\rangle -\left\langle \xi
,u\right\rangle \left\langle \gamma ,\eta \right\rangle -\left\langle \xi
,u\right\rangle \left\langle x,\gamma \right\rangle \left\langle x,\eta
\right\rangle \right) \\
&&-v\left\langle x,u\right\rangle \left( \left\langle \eta ,x\right\rangle
\left\langle \gamma ,\xi \right\rangle -\left\langle \xi ,x\right\rangle
\left\langle \gamma ,\eta \right\rangle \right) \\
&&+\eta \left( \left\langle \gamma ,\xi \right\rangle +\left\langle x,\gamma
\right\rangle \left\langle x,\xi \right\rangle \right) \left( \left\langle
u,v\right\rangle +\left\langle x,u\right\rangle \left\langle
x,v\right\rangle \right) \\
&&-\xi \left( \left\langle \gamma ,\eta \right\rangle +\left\langle x,\gamma
\right\rangle \left\langle x,\eta \right\rangle \right) \left( \left\langle
u,v\right\rangle +\left\langle x,u\right\rangle \left\langle
x,v\right\rangle \right)
\end{eqnarray*}

Adding together the terms $a^{\left( s\right) }\left( a^{\left( s\right)
}\left( u,\xi ,\eta \right) ,v,\gamma \right) ,$ $a^{\left( s\right) }\left(
u,a^{\left( s\right) }\left( v,\xi ,\eta \right) ,\gamma \right) ,$ $%
a^{\left( s\right) }\left( u,v,a^{\left( s\right) }\left( \gamma ,\xi ,\eta
\right) \right) ,$ we see that the contributions that are proportional to $v$
and $\gamma $ vanish. Consider the contributions of $\xi :$%
\begin{eqnarray*}
&&\left( \left\langle v,\eta \right\rangle +\left\langle x,v\right\rangle
\left\langle x,\eta \right\rangle \right) \left( \left\langle u,\gamma
\right\rangle +\left\langle x,u\right\rangle \left\langle x,\gamma
\right\rangle \right) -\left( \left\langle \gamma ,\eta \right\rangle
+\left\langle x,\gamma \right\rangle \left\langle x,\eta \right\rangle
\right) \left( \left\langle u,v\right\rangle +\left\langle x,u\right\rangle
\left\langle x,v\right\rangle \right) \\
&=&\left\langle v,\eta \right\rangle \left\langle u,\gamma \right\rangle
+\left\langle v,\eta \right\rangle \left\langle x,u\right\rangle
\left\langle x,\gamma \right\rangle +\left\langle x,v\right\rangle
\left\langle x,\eta \right\rangle \left\langle u,\gamma \right\rangle
+\left\langle x,v\right\rangle \left\langle x,\eta \right\rangle
\left\langle x,u\right\rangle \left\langle x,\gamma \right\rangle \\
&&-\left\langle \gamma ,\eta \right\rangle \left\langle u,v\right\rangle
-\left\langle \gamma ,\eta \right\rangle \left\langle x,u\right\rangle
\left\langle x,v\right\rangle -\left\langle x,\gamma \right\rangle
\left\langle x,\eta \right\rangle \left\langle u,v\right\rangle
-\left\langle x,\gamma \right\rangle \left\langle x,\eta \right\rangle
\left\langle x,u\right\rangle \left\langle x,v\right\rangle \\
&=&\left\langle v,\eta \right\rangle \left\langle u,\gamma \right\rangle
-\left\langle \gamma ,\eta \right\rangle \left\langle u,v\right\rangle
+\left\langle v,\eta \right\rangle \left\langle x,u\right\rangle
\left\langle x,\gamma \right\rangle +\left\langle x,v\right\rangle
\left\langle x,\eta \right\rangle \left\langle u,\gamma \right\rangle \\
&&-\left\langle \gamma ,\eta \right\rangle \left\langle x,u\right\rangle
\left\langle x,v\right\rangle -\left\langle x,\gamma \right\rangle
\left\langle x,\eta \right\rangle \left\langle u,v\right\rangle
\end{eqnarray*}%
This is precisely the coefficient of $\xi $ in $a^{\left( s\right) }\left(
a^{\left( s\right) }\left( u,v,\gamma \right) ,\xi ,\eta \right) .$ The
expression is skew-symmetric in $\xi $ and $\eta ,$ so the same conclusion
follows.
\end{proof}

\bibliographystyle{s:/tex/BibTeX/bst/habbrv}
\bibliography{s:/tex/input/refs2}

\end{document}